% !TEX encoding = UTF-8 Unicode 
\documentclass[12pt, a4paper, leqno]{amsart}
\usepackage{amsmath}
\usepackage{amsfonts}
\usepackage{amssymb}
\usepackage{times}
% tyylimääritelmiä
%\usepackage{comment,graphicx}
\usepackage[T1]{fontenc} %skandit 
\usepackage{url} 
\usepackage{color,esint}
\usepackage{graphicx} 
\usepackage{enumitem}

\usepackage{mathtools}

\setlength{\oddsidemargin}{0.5cm}
\setlength{\evensidemargin}{0.5cm}
\setlength{\textwidth}{15.5cm}
\setlength{\topmargin}{0cm}
\setlength{\textheight}{24cm} % {24.6cm}
\setlength{\marginparwidth}{2.5cm}
\let\oldmarginpar\marginpar
\renewcommand\marginpar[1]{\-\oldmarginpar[\raggedleft\footnotesize #1]%
{\raggedright\footnotesize #1}}

%Theorems, definitions,...
\usepackage{amsthm}
%\swapnumbers
\theoremstyle{plain}
\newtheorem{thm}[equation]{Theorem}
\newtheorem{lem}[equation]{Lemma}
\newtheorem{prop}[equation]{Proposition}

\theoremstyle{definition}
\newtheorem{defn}[equation]{Definition}

\theoremstyle{remark}
\newtheorem{rem}[equation]{Remark}

\numberwithin{equation}{section}

\newcommand{\R}{\mathbb{R}}

\newcommand{\Rn}{\mathbb{R}^n}

\def\essinf{\operatornamewithlimits{ess\,inf}}
\def\essliminf{\operatornamewithlimits{ess\,\lim\,inf}}
\newcommand{\esslimsup}{\operatornamewithlimits{ess\,\lim\,sup}}

\renewcommand{\phi}{\varphi}

\def\le{\leqslant}
\def\leq{\leqslant}
\def\ge{\geqslant}
\def\geq{\geqslant}
\def\phi{\varphi}
\def\rho{\varrho}
\def\vartheta{\theta}

\def\esssup{\operatornamewithlimits{ess\,sup}}

\def\diam{\qopname\relax o{diam}}

\def\spt{\qopname\relax o{spt}}
\def\dist{\qopname\relax o{dist}}

\def\phix{{\phi(\cdot)}}

\def\loc{{\rm loc}}

\date{\today}

\begin{document}

\title[Global continuity and higher integrability]{Global continuity and higher integrability of a minimizer of an obstacle problem under generalized Orlicz growth conditions}
\author{Arttu Karppinen}

\subjclass[2010]{49N60 (35J60, 35B65, 46E35)}
\keywords{Dirichlet energy integral, minimizer, obstacle problem, generalized Orlicz space, Musielak--Orlicz spaces, nonstandard growth, continuity, higher integrability.}

\begin{abstract}
We prove continuity up to the boundary of the minimizer of an obstacle problem and higher integrability of its gradient under generalized Orlicz growth. The result recovers similar results obtained in the special cases of polynomial growth, variable exponent growth and produces new results
for Orlicz and double phase growth.
\end{abstract}

\maketitle

%%%%%%%%%%%%%%%%%%%%%%%%%%%%%%%%%%%%%%%%%%%%%%%%%%%%%%%%
%%%%%%%%%%%%%%%%%%%%%%%%%%%%%%%%%%%%%%%%%%%%%%%%%%%%%%%%

\section{Introduction}

We study the obstacle problem related to the Dirichlet energy integral over a bounded domain $\Omega \subset \R^n$ with boundary values in the Sobolev sense
\begin{align*}
\inf \int_{\Omega} \phi(x, |\nabla u|) \, dx,
\end{align*}
where the infimum is taken over functions $u \in W^{1,\phi}(\Omega)$ such that, given functions $\psi, f :\Omega \to \R$, we have $u \geq \psi$ almost everywhere and $u-f \in W^{1,\phi}_0(\Omega)$. In this paper we assume that $\phi$ satisfies generalized Orlicz growth conditions (see Section \ref{sec:prop}). This class of growth conditions generalize several interesting special cases such as the standard polynomial growth $t \mapsto t^p$, Orlicz growth $t \mapsto \phi(t)$, see for example \cite{BreV13}, variable exponent growth $t \mapsto t^{p(x)}$, see for example \cite{DieHHR11,Ok16} and double phase case $t \mapsto t^{p} + a(x) t^q$, see for example \cite{BarCM15}. Additionally, the problem is motivated by the study of partial differential equations, see for example \cite{GwaWWZ12}.

In this paper we prove two main results of which the first concerns the boundary continuity of a minimizer of the obstacle problem. For definitions and assumptions, see Sections \ref{sec:prop} and \ref{sec:esteongelma}. To best of our knowledge, the result is new even in the special cases of Orlicz and double phase growth.
\begin{thm}
\label{thm:main-result}
Let $\phi \in \Phi_c(\R^n)$ be strictly convex and satisfy (A0), (A1), (A1-$n$), (aInc) and (aDec). Let $\psi \in C(\Omega)$ and $f \in C(\overline{\Omega})\cap W^{1,\phi}(\Omega)$ be such that $\mathcal{K}_{\psi}^{f}(\Omega) \not = \emptyset$ and let $u$ be the continuous minimizer of the $\mathcal{K}^{f}_{\psi}(\Omega)$-obstacle problem from Theorem \ref{thm:u-is-continuous}. If $x_0 \in \partial \Omega$ satisfies the capacity fatness condition \eqref{eq:capacity-fatness}, then
\begin{align*}
\lim_{x \to x_0} u(x) = f(x_0).
\end{align*}
\end{thm}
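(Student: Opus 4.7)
The proof splits into the two one-sided limits
\[
\liminf_{x\to x_0} u(x) \ge f(x_0), \qquad \limsup_{x\to x_0} u(x) \le f(x_0),
\]
and I would establish both via a Wiener-type capacity estimate at $x_0$ adapted to the generalized Orlicz setting.

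For the lower bound, recall that as a minimizer of the $\mathcal{K}_\psi^f(\Omega)$-obstacle problem, $u$ is a weak $\phi$-supersolution on all of $\Omega$ (one-sided variations upward are admissible since they respect the obstacle constraint). Fix $\varepsilon>0$ and set $w = (f(x_0) - \varepsilon - u)_+$. By continuity of $f$ at $x_0$, the function $w$ vanishes in the Sobolev trace sense on $\partial \Omega \cap B(x_0, r_0)$ for all $r_0$ sufficiently small. Testing the supersolution inequality against $w \zeta^\gamma$, where $\zeta$ is a standard cut-off supported in $B(x_0, 2r)$ and $\gamma$ is tuned to $\phi$, and absorbing the $\nabla \zeta$ terms via a $\phi$-Young inequality (using (aInc), (aDec)) yields a Caccioppoli estimate
\[
\int_{B(x_0, r) \cap \Omega} \phi\bigl(x, |\nabla w|\bigr) \, dx \;\le\; C \int_{B(x_0, 2r) \cap \Omega} \phi\!\left(x, \tfrac{w}{r}\right) dx.
\]
Combined with a $\phi$-Sobolev--Poincar\'e inequality for functions whose trace vanishes on a $\phi$-capacity-fat set, iterated over a dyadic sequence $r_j = 2^{-j}r_0$, the fatness assumption \eqref{eq:capacity-fatness} forces $w(x) \to 0$ as $x \to x_0$. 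Since $\varepsilon$ is arbitrary, the lower bound follows.

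For the upper bound, nonemptyness of $\mathcal{K}_\psi^f(\Omega)$ combined with the trace theory on $\partial\Omega$ forces $\psi \le f$ on $\partial\Omega$ in a weak sense; hence by continuity of $\psi$ inside $\Omega$ we can pick $r_0>0$ so small that $\psi(x) < f(x_0)+\varepsilon$ on $B(x_0, 2r_0) \cap \Omega$. Put $M = f(x_0) + \varepsilon$ and define $\tilde u = \min\{u, M\}$ on $B(x_0, r_0)\cap\Omega$ and $\tilde u = u$ elsewhere. Then $\tilde u \ge \psi$ a.e.\ and $\tilde u - f = u - f \in W^{1,\phi}_0(\Omega)$, so $\tilde u \in \mathcal{K}_\psi^f(\Omega)$ is an admissible competitor. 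The minimality of $u$, applied against $\tilde u$, localizes on the set $\{u > M\}\cap B(x_0, r_0)$ and, after cutting off, produces a Caccioppoli estimate for $(u-M)_+$ of exactly the same shape as above. Rerunning the identical Wiener-type iteration yields $(u-M)_+(x) \to 0$ as $x \to x_0$, and sending $\varepsilon \to 0$ finishes the proof.

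The main obstacle is the Wiener-type capacity estimate in the generalized Orlicz framework. The classical De Giorgi / Maz'ya iteration for supersolutions needs (aInc) and (aDec) for quantitative control of $\phi$ under truncation and rescaling, while (A0), (A1), (A1-$n$) ensure that $\phi(\cdot, \cdot)$ behaves sufficiently uniformly across a small ball, so that a usable $\phi$-Sobolev--Poincar\'e inequality on sets with capacity-fat complement is available. Showing that the fatness condition \eqref{eq:capacity-fatness} actually drives geometric decay across dyadic scales, rather than merely summability, is the most delicate point, and is where the assumptions on $\phi$ come together most tightly.
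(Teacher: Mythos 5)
Your decomposition into the two one-sided limits matches the paper, but the routes are genuinely different. The paper reduces both halves to the already-established non-obstacle boundary regularity theorem (Theorem \ref{thm:no-obstacle}, from the cited earlier work): for the upper bound it replaces $f$ by $\max\{f,\psi\}$ via Lemma \ref{lem:f-above-psi}, observes that on $D=\{u>f\}$ the obstacle is inactive so $u$ is a local minimizer in $D$ with $u-f\in W^{1,\phi}_0(D)$, checks that $D\subset\Omega$ inherits capacity fatness at $x_0$, and applies Theorem \ref{thm:no-obstacle}; for the lower bound it takes the unique non-obstacle minimizer $h$ with the same boundary data, gets $h\le u$ from the comparison principle (Proposition \ref{prop:comparison}), and again invokes Theorem \ref{thm:no-obstacle}. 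You instead try to run a Wiener/Maz'ya-type capacity iteration directly. That is a self-contained alternative in principle, but the key ingredient --- a $\phi$-Sobolev--Poincar\'e inequality for functions vanishing on a $\phi$-capacity-fat set, together with the iteration extracting geometric decay across dyadic scales --- is not in the paper's toolbox; the paper's argument is specifically designed to delegate exactly that content to Theorem \ref{thm:no-obstacle} rather than reprove it, and you do not indicate how you would establish it. Your route also never uses strict convexity, whereas the paper needs it for the comparison principle; that asymmetry is itself a sign that you are implicitly leaning on more Wiener machinery than you have available.

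There is also a concrete error in your upper-bound competitor. You claim $\tilde u - f = u - f \in W^{1,\phi}_0(\Omega)$, but with $\tilde u=\min\{u,M\}$ on $B(x_0,r_0)\cap\Omega$ and $\tilde u = u$ elsewhere one has $\tilde u - f = (u-f) - (u-M)_+\chi_{B(x_0,r_0)\cap\Omega}$, which differs from $u-f$ on $\{u>M\}\cap B(x_0,r_0)$. Worse, $\tilde u$ as written is pieced together discontinuously across $\partial B(x_0,r_0)\cap\Omega$ and so need not be a Sobolev function at all. To repair this you would need either a global truncation (which spoils the boundary data away from $x_0$) or a cut-off $\zeta$ and then a separate argument that $\zeta(u-M)_+ \in W^{1,\phi}_0(\Omega)$, for instance by bounding $0\le (u-M)_+ \le (u-f)_+$ near $x_0$ (using $f<M$ there by continuity of $f$) and invoking a lemma in the spirit of Lemma \ref{lem:u-between-compacts}. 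As written, the admissibility of your competitor is unjustified, which breaks the whole upper-bound argument.
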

\noindent A similar result in the generalized Orlicz setting without the obstacle has been proven in \cite{HarH_pp16}. 

The proof for interior continuity follows outlines given in the book of Bj\"orns' \cite{BjoB11}. The proof of the main theorem and few intermediate results are analogous to \cite{HeiKM06}, since scaling of the minimizer does not preserve minimality in the generalized Orlicz case. We also the study relationship of the measure density condition and $\phi$-fatness: the former implies the latter when $q<n$ (Lemma \ref{lem:dens-fat}). For further information about capacities in this context, see for example \cite{BarHH_pp17,HarH_pp16} in $\R^n$ and \cite{OhnS16} in metric measure spaces. 

The second main result is global higher integrability of the gradient:
\begin{thm}[Global higher integrability of the gradient]
\label{thm:main-theorem}
Suppose that $\phi \in \Phi_w(\R^n)$ satisfies conditions (A0), (A1), (aInc) and (aDec). Additionally suppose that the measure density condition \eqref{eq:measure-density} is fulfilled at every point $x_0 \in \partial \Omega$ with a constant $c_\ast$, and let $u$ be the minimizer of the $\mathcal{K}^{f}_{\psi}(\Omega)$-obstacle problem, where $\psi, f \in W^{1,\phi}(\Omega)$ and $\phi(\cdot, |\nabla \psi|), \phi(x, |\nabla f|) \in L^{1+\delta}(\Omega)$ for some
$\delta > 0$ and $\mathcal{K}^{f}_{\psi}(\Omega) \not = \emptyset$.
Then there exist $\varepsilon > 0$ and a constant $C=C(n,\phi,c_\ast)$ such that $\phi(x, |\nabla u|) \in L^{1+\varepsilon}(\Omega)$ and
\begin{align}
\label{eq:main-estimate}
\begin{split}
\int_{\Omega} \phi(x, |\nabla u|)^{1+\varepsilon} \, dx &\leq C \bigg [ \left ( \int_{\Omega} \phi(x, |\nabla u|) \, dx\right )^{1+\varepsilon} \\
&\quad + \int_{\Omega} \phi(x, |\nabla \psi|)^{1+ \varepsilon} \, dx + \int_{\Omega} \phi(x, |\nabla f|)^{1+\varepsilon} \, dx + 1 \bigg].
\end{split}
\end{align}
\end{thm}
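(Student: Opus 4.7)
The plan is a Gehring-type self-improvement argument starting from a reverse Hölder inequality, verified separately on interior balls $B_{2r}\subset\Omega$ and on balls $B_{2r}(x_0)$ with $x_0\in\partial\Omega$. The measure density condition \eqref{eq:measure-density} is precisely what allows me to treat the boundary case by the same tools as the interior case, after extending $u-f$ by zero outside $\Omega$.

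For an interior ball, I first derive a Caccioppoli-type estimate. With a cutoff $\eta\in C_0^\infty(B_{2r})$, $\eta\equiv 1$ on $B_r$, $|\nabla\eta|\le C/r$, and $w(x) := \max\{u_{B_{2r}}, \psi(x)\}$, the competitor $v := u - \eta(u-w)$ lies in $\mathcal{K}_\psi^f(\Omega)$. Inserting it into the minimality inequality, splitting $\nabla v$ via the product rule, and applying Young's inequality for generalized $\Phi$-functions together with (aInc) and (aDec) to absorb the $|\nabla u|$ terms on $B_r$ yields
\begin{align*}
\int_{B_r}\phi(x,|\nabla u|)\,dx \le C\int_{B_{2r}}\phi\!\left(x,\tfrac{|u-w|}{r}\right)dx + C\int_{B_{2r}}\phi(x,|\nabla\psi|)\,dx.
\end{align*}
The Sobolev--Poincaré inequality in generalized Orlicz spaces, available under (A0), (A1), (aInc), (aDec), then converts the first term into a power of an integral of $\phi(x,|\nabla u|)^\theta$ for some $\theta\in(0,1)$, producing a reverse Hölder inequality with lower-order obstacle data on the right.

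For a boundary ball I would replace $w$ by $\max\{f,\psi\}$ in the competitor and note that $v=f$ in a neighbourhood of $\partial\Omega\cap B_{2r}$, so admissibility is preserved. Zero-extending $u-f$ outside $\Omega$ and using the measure density condition to guarantee $|B_{2r}\setminus\Omega|\ge c_\ast|B_{2r}|$ permits the same Sobolev--Poincaré step over the full ball $B_{2r}$, yielding the same reverse Hölder estimate with an additional $\phi(x,|\nabla f|)$ contribution. I expect the main obstacle of the whole argument to sit here: one has to construct a single competitor that simultaneously lies above $\psi$, matches $f$ near $\partial\Omega$, and produces right-hand-side terms controlled by $|\nabla\psi|$ and $|\nabla f|$ in a way compatible with the $x$-dependence of $\phi$, so that (A1) can be used to pass between $\phi(y,\cdot)$ and $\phi(x,\cdot)$ on the ball. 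The extra integrability $\phi(x,|\nabla\psi|),\phi(x,|\nabla f|)\in L^{1+\delta}$ is what ultimately lets these data-terms survive a slight improvement of the exponent on the left.

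With the reverse Hölder inequality established on every ball meeting $\Omega$, I would invoke the generalized Orlicz version of Gehring's lemma (self-improvement of $(\phi,\theta)$-reverse Hölder estimates) to extract an exponent $\varepsilon>0$ and a constant $C=C(n,\phi,c_\ast)$, from which \eqref{eq:main-estimate} follows. The (aInc) and (aDec) conditions are essential throughout, as they guarantee both the Sobolev--Poincaré inequality and the quantitative self-improvement, ensuring that $\phi$ behaves comparably to a power in the Gehring iteration.
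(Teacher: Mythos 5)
Your proposal follows the paper's strategy closely: interior and boundary Caccioppoli inequalities (with slightly different but equivalent competitors — the paper uses $v = u - u_{2B} - \eta\bigl(u-u_{2B}-(\psi-\psi_{2B})\bigr)$ in the interior and reduces to $f\ge\psi$ via Lemma \ref{lem:f-above-psi} before using $u-\eta(u-f)$ at the boundary, which matches your $\max\{f,\psi\}$ after that reduction), then Sobolev--Poincar\'e with the measure density condition supplying the missing mean near $\partial\Omega$, and finally Gehring's lemma with a covering argument. The step you flag as delicate — estimating $\fint_{2B}\phi\bigl(x,|u-f|/r\bigr)\,dx$ after zero extension, with no mean subtracted — is exactly where the paper spends its effort, writing $|u-f|\le|v-v_{3B}|+|v_{3B}-v_A|$ for the zero set $A$, controlling the second term via the BMO-type inequality \eqref{eq:BMO} and then pulling the resulting constant through $\phi$ with the Jensen-type estimate (Lemma \ref{lem:jensen}); this Jensen step is precisely where (A1) enters, as you predicted.
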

This result continues the recently published article \cite{HarHK18}, where the authors proved local higher integrability of the gradient of the quasiminimizer. Now the result is improved to a global result and the problem is generalized with an obstacle $\psi$. These results are steps towards higher regularity results of the minimizer such as H\"older continuity for every exponent $\beta \in (0,1)$ and H\"older continuity of the gradient. For example in \cite{HasO19} local higher integrability of the gradient is used several times in the proof.
Again, to best of our knowledge, produces new results in special cases of Orlicz and double phase growth. For variable exponent analogue, see \cite{EkeHL13}.

The strategy of the proof is to combine two Caccioppoli inequalities with the previously
proven Sobolev--Poincar\'e inequality to lay ground for Gehring's lemma. The first Caccioppoli
inequality handles the interior case with the obstacle and the second inequality handles balls nearly overlapping with the boundary of $\Omega$. 
To achieve global results in general we assume that the measure density condition \eqref{eq:measure-density} is fulfilled at every boundary point.

\section{Properties of generalized $\Phi$-functions}
\label{sec:prop}

By $\Omega \subset \Rn$ we denote a bounded domain, i.e.\ a bounded, open and connected set.
When $A$ and $B$ are open sets and $\overline{A}$ is compact, by $A \Subset B$ we mean that $\overline{A} \subset B$. The measure of a set $A$ is denoted by $|A|$.
By $c$ or $C$ we denote a generic constant whose
value may change between appearances.
A function $f$ is \textit{almost increasing} if there
exists a constant $L \ge 1$ such that $f(s) \le L f(t)$ for all $s \le t$
(more precisely, $L$-almost increasing).
\textit{Almost decreasing} is defined analogously. A function $f$ is called \emph{convex} if $f(tx + (1-t)y) \leq tf(x) + (1-t)f(y)$ for every $t \in (0,1)$. Strict convexity assumes that the previous inequality is strict.

\begin{defn}
We say that $\phi: \Omega\times [0, \infty) \to [0, \infty]$ is a
\textit{weak $\Phi$-function}, and write $\phi \in \Phi_w(\Omega)$, if
\begin{itemize}
\item 
For every $t \in [0, \infty)$ the function $x \mapsto \phi(x, t)$ is measurable and for every $x \in \Omega$ the function $t \mapsto \phi(x, t)$ is increasing.
\item 
$\displaystyle \phi(x, 0) = \lim_{t \to 0^+} \phi(x,t) =0$ and $\displaystyle \lim_{t \to \infty}\phi(x,t)=\infty$ for every $x\in \Omega$.
\item 
The function $t \mapsto \frac{\phi(x, t)}t$ is
$L$-almost increasing for $t>0$ and every $x\in \Omega$. 
\item 
The function $t \mapsto \phi(x,t)$ is left-continuous for $t >0$ and every $x \in \Omega$.
\end{itemize}
If, additionally, $t \mapsto \phi(x,t)$ is convex, we denote $\phi \in \Phi_c(\Omega)$ and say that $\phi$ is a convex $\Phi$-function.
\end{defn}

By $\phi^{-1}$ we mean the \emph{left inverse} of $\phi$, defined as
\begin{align*}
\phi^{-1}(\tau):= \inf \{t \geq 0 \, : \, \phi(t)\geq \tau\}.
\end{align*}
Let us write $\phi^+_B (t) := \sup_{x \in B\cap \Omega} \phi(x, t)$ and
$\phi^-_B (t) := \inf_{x \in B\cap \Omega} \phi(x, t)$;
and abbreviate $\phi^\pm := \phi^\pm_\Omega$.
Throughout the paper we need one or multiple of the following assumptions.
\begin{itemize}[leftmargin=1.7cm]
\item[(A0)]
There exists $\beta \in(0,1)$ such that $\phi^+(\beta) \le 1 \le \phi^-(1/\beta)$.
\item[(A1)]
There exists $\beta\in (0,1)$ such that, for every ball $B \cap \Omega \not = \emptyset$,
\[
\phi^+_B (\beta t) \le \phi^-_B (t) \quad \text{when }  t \in \left [1, (\phi_B^-)^{-1}\left (\tfrac1{|B|} \right ) \right ]
\]
\item[(A1-$n$)]
There exists $\beta \in (0,1)$ such that, for every ball $B \cap \Omega \not = \emptyset$,
\[
\phi^{+}_{B}(\beta t) \leq \phi_{B}^{-}(t) \quad \text{when } t \in \left [1, \tfrac{1}{\diam(B)}\right ].
\]

\end{itemize}

We also introduce the following assumptions, which are of different nature. They are 
related to the $\Delta_2$ and $\nabla_2$ conditions from Orlicz space theory. 
\begin{itemize}[leftmargin=1.7cm]
\item[(aInc)$_p$]
There exists $L\ge 1$ such that $t \mapsto \frac{\phi(x,t)}{t^{p}} $ is $L$-almost increasing in $(0,\infty)$.
\item[(aDec)$_q$]
There exists $L\ge 1$ such that $t \mapsto \frac{\phi(x,t)}{t^{q}} $ is $L$-almost decreasing in $(0,\infty)$.
\end{itemize}
We write (aInc) if there exists $p>1$ such that (aInc)$_p$ holds, similarly for (aDec). For brevity, we may write for example that a constant $C = C(n, \phi)$, in which case $C$ depends on the dimension and some or all of the parameters listed in the previous assumptions related to $\phi$. 

Despite the technical formulation of the assumptions, each of them has an intuitive interpretation. (A0) declares the space to be unweighed, (A1) is a continuity assumption with respect to the space variable, while (A1-$n$) takes account of the dimension also. These are generalizations of the $\log$-H\"older continuity of the variable exponent spaces and the assumption $\frac{q}{p} \leq 1+ \frac{\alpha}{n}$ of the double phase case. Lastly, (aInc)$_p$ and (aDec)$_q$ state that globally $\phi(x,t)$ grows faster than $t^p$ and slower than $t^q$.

We say that $\phi$ is \textit{doubling} if there exists a constant $L\ge 1$ such that $\phi(x, 2t) \le L \phi(x, t)$
for every $x \in \Omega$ and every $t \geq 0$. If $\phi$ is doubling with constant $L$, then
by iteration
\begin{equation}\label{equ:doubling_iteration}
\phi(x,t) \le L^2 \Big( \frac{t}{s}\Big)^{Q} \phi(x,s)
\end{equation}
for every $x \in \Omega$ and every $0<s<t$, where $Q= \log_2(L)$. For the proof see for example \cite[Lemma 3.3, p.~66]{BjoB11}. 
Note that doubling also yields that
\begin{align}
\label{eq:phi-triangle}
\phi(x, t+s)\le L \phi(x, t) + L \phi(x, s).
\end{align}
Since (aDec) is equivalent to doubling \cite[Lemma 2.6]{HarHT17}, inequality \eqref{eq:phi-triangle} holds for $\phi$ satisfying (aDec). In the proofs we often use the phrase like ''using (aDec)'' and mean doubling or its consequence \eqref{eq:phi-triangle}.

Generalized Orlicz and Orlicz--Sobolev spaces have been studied with our assumptions for example in \cite{HarH_pp16,HarH17,HarH18,HarHK16,HarHK18,HarHT17}. We recall some definitions. We denote by $L^0(\Omega)$ the set of measurable functions in $\Omega$ and the integral average of a function $f$ over a set $A$ is denoted by $\fint_A f(x) \, dx =: f_A$. 
Additionally, we denote the positive and negative part of a function as $f_+ = \max\{f,0\}$ and $f_- = \max\{-f,0\}$.
\begin{defn}
Let $\phi \in \Phi_w(\Omega)$ and define the \emph{modular} $\varrho_{\phi}$ for $f \in L^0(\Omega)$ by
\begin{align*}
\varrho_{\phi} (f):=\int_\Omega \phi(x,|f(x)|) \, dx.
\end{align*}
The \emph{generalized Orlicz space}, also called Musielak--Orlicz space, is defined as the set
\begin{align*}
L^\phi(\Omega) := \left \lbrace f \in L^0(\Omega): \, \varrho_{\phi}(\lambda f) <\infty \text{ for some } \lambda >0 \right  \rbrace
\end{align*}
equipped with the (Luxemburg) norm 
\begin{align*}
\|f\|_{L^\phi(\Omega)} := \inf \left\lbrace \lambda >0: \, \varrho_{\phi} \left( \frac{f}{\lambda} \right ) \leq 1 \right\rbrace.
\end{align*}
If the set is clear from the context we abbreviate $\|f\|_{L^\phi(\Omega)}$ by $\|f\|_{\phi}$. A function $f$ belongs to \emph{local generalized Orlicz space} $L^{\phi}_{\loc}(\Omega)$ if $\|f\|_{L^\phi(K)} < \infty$ for every compact set $K \Subset \Omega$.

A function $u \in L^\phi(\Omega)$ belongs to the \emph{generalized Orlicz--Sobolev space} $W^{1,\phi}(\Omega)$ if its weak partial derivatives $\partial_1 u, \dots, \partial_n u$ exist and belong to $L^\phi(\Omega)$. The norm of Orlicz--Sobolev space is defined as $\|f\|_{W^{1,\phi}(\Omega)} := \|f\|_{L^\phi(\Omega)} + \|\nabla f\|_{L^\phi(\Omega)}$, where $\nabla f$ is the weak gradient of $f$. 
Additionally we define $W^{1,\phi}_0(\Omega)$ as the closure of the space $C_0^{\infty}(\Omega)$ with respect to the norm of Orlicz--Sobolev space.
\end{defn}

The definition of $W^{1,\phi}_0(\Omega)$ is reasonable, as $C_0^\infty(\Omega)$ is dense in $W^{1,\phi}_0(\Omega)$ if $\phi$ satisfies (A0), (A1) and (aDec) and $\Omega$ is bounded \cite[Theorem 6.4.6]{HarH18} (boundedness of $\Omega$ frees us of the assumption (A2)). 

The modular $\varrho_\phi$ and the norm have the following useful property, called the unit ball property \cite[Lemma 3.2.5]{HarH18}. However, in our case we need only the following implication which follows from the definition of the norm 
\begin{align}
\label{eq:unit-ball-property}
%\|f\|_{\phi} < 1 \quad \Rightarrow \quad 
\varrho_{\phi}(f)\leq 1 \quad \Rightarrow \quad \|f\|_{\phi} \leq 1.
\end{align}

Next we recall the definition of relative Sobolev capacity of a set as a another way to measure the size of a set. Basic properties of this capacity have been studied in \cite{BarHH_pp17}.
\begin{defn}
\label{def:capacity}
Let $\phi \in \Phi_w(\Omega)$ and $E \Subset \Omega$. Then \emph{relative Sobolev capacity of} $E$ is defined as
\begin{align*}
C_{\phi}(E, \Omega) = \inf_{u \in S_{\phi}(E,\Omega)} \int_{\Omega} \phi(x, |\nabla u|) \, dx,
\end{align*}
where the infimum is taken over the set $S_{\phi}(E,\Omega)$ of all functions $u \in W^{1,\phi}_0(\Omega)$ with $u \geq 1$ in an open set containing $E$.
\end{defn}

In order to attain global results, some regularity of the boundary has to be assumed. In this paper we use the measure density and capacity fatness conditions 
\begin{align}
\label{eq:measure-density}
|B(x_0,r) \setminus \Omega| \geq c_\ast |B(x_0,r)|
\end{align}
\begin{align}
\label{eq:capacity-fatness}
C_\phi (B(x_0,r) \setminus \Omega, B(x_0, 2r)) \geq c_\ast \,  C_\phi (B(x_0,r), B(x_0, 2r)),
\end{align}
where $B(x_0,r)$ is a ball centred at a point $x_0 \in \partial \Omega$ and $r \leq R$ for some $R>0$ and $c_\ast \in (0,1)$.
% We show that the measure density condition implies the capacity fatness condition if $q<n$ (see Lemma \ref{lem:dens-fat}).
 The measure density condition is often sufficiently general as for example all domains with Lipschitz boundary satisfy it and therefore it is commonly used in regularity theory. However the capacity fatness condition was used in \cite{HarH_pp16} so we get the more general result with ease in the case of boundary continuity.

Even though we consider minimizing problem in $\Omega$ we assume that $\phi$ is defined in the whole $\R^n$ since later we need to consider the complement of $\Omega$ due to previous boundary conditions.

\section{Auxiliary results}
Let us first collect some general lemmas, which are not related to the obstacle problem directly.
First, we state the following lemma \cite[Lemma 2.11]{HarH_pp16}, to which we refer to throughout the paper.

\begin{lem}
\label{lem:min-zero-valued}
Let $\Omega \subset \R^n$ be bounded. Let $\phi \in \Phi_w(\Omega)$ satisfy (A0), (A1) and (aDec). If $v \in W^{1,\phi}(\Omega)$ is non-negative and $u \in W^{1,\phi}_0(\Omega)$, then $\min\{u,v\} \in W^{1,\phi}_0(\Omega)$.
\end{lem}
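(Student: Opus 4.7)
My plan is to combine approximation by smooth functions with a dominated-convergence argument applied to the gradient. Using the density of $C_0^\infty(\Omega)$ in $W^{1,\phi}_0(\Omega)$ quoted in the paper (valid because $\phi$ satisfies (A0), (A1), (aDec) and $\Omega$ is bounded), I pick $u_k \in C_0^\infty(\Omega)$ with $u_k \to u$ in $W^{1,\phi}(\Omega)$, and set $w_k := \min\{u_k, v\}$. Passing to a subsequence I may also assume $u_k \to u$ and $\nabla u_k \to \nabla u$ almost everywhere.

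The first step is to verify that each $w_k$ lies in $W^{1,\phi}_0(\Omega)$. Since $u_k$ has compact support $K \subset \Omega$ and $v \geq 0$, we have $u_k = 0 \leq v$ off $K$, so $w_k$ vanishes there and therefore has compact support in $\Omega$. As a lattice combination $w_k = \tfrac12(u_k + v - |u_k - v|)$ of $W^{1,\phi}$-functions, $w_k$ lies in $W^{1,\phi}(\Omega)$. Convolving with a standard mollifier $\rho_\ve$ for $\ve$ less than $\dist(K,\partial\Omega)$ produces $C_0^\infty(\Omega)$ approximations converging to $w_k$ in $W^{1,\phi}(\Omega)$; this convergence of mollifications rests on exactly (A0), (A1) and (aDec), so $w_k \in W^{1,\phi}_0(\Omega)$.

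The main step is the passage to the limit: I claim $w_k \to \min\{u, v\}$ in $W^{1,\phi}(\Omega)$. The $L^\phi$-convergence is immediate from the $1$-Lipschitz bound $|w_k - \min\{u,v\}| \leq |u_k - u|$. For the gradient I split
\begin{align*}
\nabla w_k - \nabla \min\{u,v\} = \chi_{\{u_k < v\}}(\nabla u_k - \nabla u) + \bigl(\chi_{\{u_k < v\}} - \chi_{\{u < v\}}\bigr)(\nabla u - \nabla v).
\end{align*}
The first term is dominated by $|\nabla u_k - \nabla u|$ and hence tends to $0$ in $L^\phi$. For the second term, on $\{u<v\}\cup\{u>v\}$ the a.e.\ convergence $u_k \to u$ forces $\chi_{\{u_k < v\}} \to \chi_{\{u < v\}}$ pointwise, while on $\{u=v\}$ the factor $\nabla u - \nabla v$ vanishes a.e.\ because weak gradients of Sobolev functions coincide on their coincidence sets. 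Since the expression is dominated by $|\nabla u - \nabla v| \in L^\phi$, the doubling property from (aDec) yields dominated convergence of the modular $\varrho_\phi$, and hence norm convergence in $L^\phi$.

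With $w_k \to \min\{u, v\}$ in $W^{1,\phi}(\Omega)$ established, closedness of $W^{1,\phi}_0(\Omega)$ in $W^{1,\phi}(\Omega)$ finishes the argument. The step I expect to be the main obstacle is the gradient convergence: one has to juggle the three regions $\{u<v\}$, $\{u>v\}$, $\{u=v\}$, invoke the standard fact about weak gradients on coincidence sets, and make dominated convergence work at the level of the modular rather than the norm---the latter being precisely where (aDec) enters in an essential way.
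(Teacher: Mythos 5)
The paper does not actually prove this lemma: it is imported verbatim from \cite[Lemma 2.11]{HarH_pp16}, so there is no in-paper proof to compare your argument against. Your self-contained proof is correct, and it follows the natural strategy for such statements: approximate $u$ by $C_0^\infty$ functions $u_k$, observe that $\min\{u_k,v\}$ is compactly supported because $v\ge 0$ kills the minimum off $\supp u_k$, invoke (a re-derivation via mollification of) the fact that compactly supported $W^{1,\phi}$-functions lie in $W^{1,\phi}_0$ (the paper itself uses this via \cite[Lemma 3.4]{HarHT17}), and then pass to the limit in $W^{1,\phi}$.

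The decomposition of the gradient difference into
\[
\chi_{\{u_k<v\}}(\nabla u_k-\nabla u)+\bigl(\chi_{\{u_k<v\}}-\chi_{\{u<v\}}\bigr)(\nabla u-\nabla v)
\]
is algebraically correct, and each piece is handled cleanly: the first goes to zero in $L^\phi$ by hypothesis, the second by dominated convergence of the modular after splitting into $\{u<v\}$, $\{u>v\}$ (where a.e.\ convergence of $u_k$ settles the characteristic functions) and $\{u=v\}$ (where $\nabla u=\nabla v$ a.e.). The invocation of (aDec) to upgrade modular convergence to norm convergence, and to guarantee $\varrho_\phi(|\nabla u-\nabla v|)<\infty$ so that the dominant is integrable, is exactly what the paper does for the closely related Lemma \ref{lem:min-max-conv}, so you are using the same toolkit. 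Two tiny remarks, neither a gap: the a.e.\ convergence of $\nabla u_k$ is asked for but never used; and since you only need \emph{some} subsequence of $w_k$ to converge in $W^{1,\phi}$ (closedness of $W^{1,\phi}_0$ then finishes), passing to a subsequence for a.e.\ convergence of $u_k$ is harmless.
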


The next lemma is intuitively clear, and follows easily from the previous lemma.
\begin{lem}
\label{lem:u-between-compacts}
Let $\phi \in \Phi_w(\Omega)$ satisfy (A0), (A1) and (aDec). Let $u \in W^{1,\phi}(\Omega)$ and $v,w \in W^{1,\phi}_0(\Omega)$. If $v \leq u \leq w$ almost everywhere in $\Omega$, then $u \in W^{1,\phi}_0(\Omega)$.
\end{lem}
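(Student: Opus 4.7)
The plan is to reduce this lemma to a direct application of Lemma \ref{lem:min-zero-valued} via a translation that turns the two-sided sandwich $v \leq u \leq w$ into a one-sided non-negativity condition, which is the form Lemma \ref{lem:min-zero-valued} can consume.

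First I would set $\tilde u := u - v$ and $\tilde w := w - v$. Linearity of $W^{1,\phi}(\Omega)$ and of its subspace $W^{1,\phi}_0(\Omega)$ gives $\tilde u \in W^{1,\phi}(\Omega)$ and $\tilde w \in W^{1,\phi}_0(\Omega)$, while the hypothesis $v \leq u \leq w$ transforms into $0 \leq \tilde u \leq \tilde w$ almost everywhere in $\Omega$. In particular $\tilde u$ is non-negative.

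Next I would invoke Lemma \ref{lem:min-zero-valued} with $\tilde w$ in the role of the $W^{1,\phi}_0$ function and $\tilde u$ in the role of the non-negative $W^{1,\phi}$ function. The lemma yields $\min\{\tilde w, \tilde u\} \in W^{1,\phi}_0(\Omega)$, and because $\tilde u \leq \tilde w$ almost everywhere this minimum coincides with $\tilde u$. Hence $u - v \in W^{1,\phi}_0(\Omega)$, and since $W^{1,\phi}_0(\Omega)$ is a linear space, $u = (u-v) + v \in W^{1,\phi}_0(\Omega)$ as desired.

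I do not anticipate a genuine obstacle: the analytic substance is already packaged in Lemma \ref{lem:min-zero-valued}, and the only creative step is choosing the correct shift. Subtracting $v$ is the natural choice because it simultaneously eliminates the lower bound and keeps the upper bound inside the zero-boundary class. An alternative route through the decomposition $u = u_+ - u_-$ would require first establishing that $w_+, v_- \in W^{1,\phi}_0(\Omega)$ and so would introduce an unnecessary auxiliary step.
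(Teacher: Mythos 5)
Your proof is correct and follows essentially the same route as the paper: subtract $v$ to normalize the lower bound to zero, observe that $u-v$ is non-negative and dominated by $w-v\in W^{1,\phi}_0(\Omega)$, and apply Lemma \ref{lem:min-zero-valued} to identify $u-v=\min\{u-v,w-v\}\in W^{1,\phi}_0(\Omega)$, then use linearity of $W^{1,\phi}_0(\Omega)$. No gaps.
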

\begin{proof}
If we subtract $v$ from all the terms in the inequality and notice that $u-v \in W^{1,\phi}_0(\Omega)$ if and only if $u \in W^{1,\phi}_0(\Omega)$, we can assume that $v=0$ almost everywhere in $\Omega$. Now since $u$ is non-negative and $w \in W^{1,\phi}_0(\Omega)$, Lemma \ref{lem:min-zero-valued} implies that $u= \min\{u,w\} \in W^{1,\phi}_0(\Omega)$.  
\end{proof}

Lastly, we prove a lemma regarding sequences of maxima and minima which is important when we are comparing functions pointwise or handling just the positive part of a function. The restriction to subsequences is not severe since later on we need the existence of a sequence rather than convergence of a specific sequence.
\begin{lem}
\label{lem:min-max-conv}
Let $\phi \in \Phi_w(\Omega)$ satisfy (A0), and (aDec). If $u_j, v_j \in W^{1,\phi}(\Omega)$ converge to $u$ and $v$ respectively in $W^{1,\phi}(\Omega)$, then there are subsequences such that $\min\{u_{j_k},v_{j_k}\} \to \min\{u,v\}$ and $\max\{u_{j_k},v_{j_k}\} \to \max\{u,v\}$ in $W^{1,\phi}(\Omega)$.
\end{lem}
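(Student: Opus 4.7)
The plan is to pass to an a.e.\ convergent subsequence and then upgrade a.e.\ convergence to $L^\phi$-convergence via the quasi-triangle inequality \eqref{eq:phi-triangle} and Vitali's convergence theorem; norm convergence in $W^{1,\phi}(\Omega)$ then follows since (aDec) makes norm convergence equivalent to modular convergence for null sequences.

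First I would extract a subsequence, not relabelled, such that $u_{j_k}$, $v_{j_k}$, $\nabla u_{j_k}$, $\nabla v_{j_k}$ converge a.e.\ to $u$, $v$, $\nabla u$, $\nabla v$. By (aDec) one has $\varrho_{\phi}(u_j - u) + \varrho_{\phi}(\nabla u_j - \nabla u) \to 0$ (and similarly for $v_j$), and a Borel--Cantelli argument on a rapidly converging subsequence delivers the a.e.\ limits. The inequality $|\min\{a,b\} - \min\{c,d\}| \leq |a-c| + |b-d|$ then gives $\min\{u_{j_k}, v_{j_k}\} \to \min\{u,v\}$ a.e., while the formula $\nabla \min\{g,h\} = \nabla g\,\chi_{\{g < h\}} + \nabla h\,\chi_{\{g \geq h\}}$ (using $\nabla g = \nabla h$ a.e.\ on $\{g = h\}$) combined with the a.e.\ convergence of gradients yields $\nabla\min\{u_{j_k}, v_{j_k}\} \to \nabla\min\{u,v\}$ a.e., after splitting $\Omega$ into the three measurable pieces $\{u < v\}$, $\{u > v\}$, $\{u = v\}$; the $\max$ case is symmetric.

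For the functions themselves, the pointwise inequality above combined with \eqref{eq:phi-triangle} gives directly $\varrho_{\phi}(\min\{u_{j_k}, v_{j_k}\} - \min\{u,v\}) \leq L\varrho_{\phi}(u_{j_k} - u) + L\varrho_{\phi}(v_{j_k} - v) \to 0$, so $L^\phi$-convergence is immediate. For the gradients I would apply Vitali's theorem: \eqref{eq:phi-triangle} yields the pointwise majorisation
\begin{align*}
\phi(x, |\nabla\min\{u_{j_k}, v_{j_k}\} - \nabla\min\{u,v\}|) \leq C\bigl[\phi(x, |\nabla u_{j_k}|) + \phi(x, |\nabla v_{j_k}|) + \phi(x, |\nabla u|) + \phi(x, |\nabla v|)\bigr],
\end{align*}
and the families on the right are equi-integrable since, for example, $\int_E \phi(x, |\nabla u_{j_k}|)\,dx \leq L\varrho_{\phi}(\nabla u_{j_k} - \nabla u) + L\int_E \phi(x, |\nabla u|)\,dx$, where the first term is uniformly small by modular convergence and the second is small for $|E|$ small by absolute continuity of the $L^1$-integral. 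Vitali's theorem then gives $\varrho_{\phi}(\nabla\min\{u_{j_k}, v_{j_k}\} - \nabla\min\{u,v\}) \to 0$, and (aDec) promotes this to norm convergence.

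The main technical point will be organising this equi-integrability cleanly using only the available assumptions (A0) and (aDec), without the stronger (A1) or (aInc) at our disposal; the inputs actually needed turn out to be just the quasi-triangle inequality \eqref{eq:phi-triangle} and $L^1$-absolute continuity, both of which are available.
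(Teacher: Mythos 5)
Your proof is correct, but it takes a genuinely different route from the paper's. The paper first reduces the statement to convergence of the positive-part map via $\min\{f,g\}=g+\min\{f-g,0\}$, and then exploits the special structure $\nabla u_+=\chi_{(0,\infty)}(u)\nabla u$: after applying \eqref{eq:phi-triangle}, the troublesome term becomes $\phi(x,|\nabla u|\,|\chi_{(0,\infty)}(u_{j_k})-\chi_{(0,\infty)}(u)|)$, which admits the \emph{fixed} dominating function $\phi(x,|\nabla u|)\in L^1$, so the generalized dominated convergence theorem of \cite{HarHK16} applies directly. You instead treat $\min$ and $\max$ head-on, establish a.e.\ convergence of the gradients by the case analysis on $\{u<v\}$, $\{u>v\}$, $\{u=v\}$ (correctly using $\nabla u=\nabla v$ a.e.\ on the equality set), and then invoke Vitali's theorem with a \emph{$k$-dependent} majorant, supplying the required uniform integrability through the quasi-triangle inequality and modular convergence. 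Both routes are sound; the paper's is shorter once one notices the reduction to $(\cdot)_+$ produces a clean dominating function, while your Vitali argument is more symmetric, avoids that algebraic reduction, and is the approach one would reach for in situations where no single dominating function is available. One small point worth being explicit about in a final write-up: the Borel--Cantelli extraction of an a.e.\ convergent subsequence requires $\phi^-(\varepsilon)>0$ for all $\varepsilon>0$, which here follows from (A0) and (aDec) (since $\phi^-(1/\beta)\geq 1$ and (aDec)$_q$ yields $\phi^-(\varepsilon)\gtrsim \varepsilon^q$ for $\varepsilon<1/\beta$); the paper instead sidesteps this by citing the embedding $W^{1,\phi}(\Omega)\subset W^{1,1}(\Omega)$.
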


\begin{proof}
Because, for example, $\min\{f,g\} = g + \min\{f-g,0\}$, it suffices to show that if $u_j$ converges to $u$ in $W^{1,\phi}(\Omega)$, then $\left ((u_{j_k})_+\right )$ converges to $u_+$, where $(u_{j_k})$ is the pointwise converging subsequence. This subsequence exists because assumption (A0) and (aInc)$_1$ imply that $W^{1,\phi}(\Omega) \subset W^{1,1}(\Omega)$ \cite[Lemma 4.4]{HarHK16}. Since $|(u_{j_k})_+ -u_+| \leq |u_{j_k}-u|$, $t \mapsto \phi(\cdot, t)$ is increasing and norm convergence is equivalent to modular convergence when $\phi$ satisfies (aDec) \cite[Corollary 3.3.4]{HarH18}, we get
\begin{align*}
\int_{\Omega} \phi(x,|(u_{j_k})_+ - u_+|) \, dx \leq \int_{\Omega} \phi(x,|u_{j_k}-u|) \, dx \to 0
\end{align*}
as $j_k \to \infty$.

As for the gradients, using \eqref{eq:phi-triangle}
\begin{align*}
\int_{\Omega} &\phi(x, |\nabla (u_{j_k})_+ - \nabla u_+|)\, dx = \int_{\Omega} \phi(x, |\chi_{(0,\infty)}(u_{j_k}) \nabla u_{j_k} - \chi_{(0, \infty)}(u) \nabla u|) \, dx \\
&\leq \int_{\Omega} \phi(x, |\chi_{(0,\infty)}(u_{j_k}) \nabla u_{j_k} - \chi_{(0, \infty)}(u) \nabla u + \chi_{(0,\infty)}(u_{j_k})\nabla u - \chi_{(0,\infty)}(u_{j_k})\nabla u|) \, dx \\
&\leq L \int_{\Omega} \phi(x, |\nabla u| |\chi_{(0,\infty)} (u_{j_k}) -\chi_{(0,\infty)}(u)|) \, dx + L \int_{\Omega} \phi(x, \chi_{(0,\infty)}(u_{j_k}) |\nabla u_{j_k} - \nabla u|) \, dx \\
&\leq L \int_{\Omega} \phi(x, |\nabla u| |\chi_{(0,\infty)} (u_{j_k}) -\chi_{(0,\infty)}(u)|) \, dx + L \int_{\Omega} \phi(x,|\nabla u_{j_k} - \nabla u|) \, dx \\
& \to 0,
\end{align*}
as the first integral converges by dominated convergence \cite[Theorem 4.1]{HarHK16} ((aDec) takes care of extra assumption that $\varrho_\phi (\lambda g) < \infty$ for the dominating function $g=|\nabla u|$ and if $u\equiv 0$ in some subset of $\Omega$, then so is $|\nabla u|$) and the second integral convergences by assumption.
\end{proof}

The proof of the following Jensen type inequality can be found for example in \cite{HarHK18,Has15,Has16}. Here we have chosen $p = 1$ and simplified the assumptions on $f$ as we do not need the sharp result. Note that if $\phi$ satisfies (aDec), then the constant $\beta_0$ can be transferred to the right-hand side as a constant $C$. 

\begin{lem}
\label{lem:jensen}
Let $\phi \in \Phi_w(B)$ satisfy assumptions (A0) and (A1). There exists $\beta_0 >0$ such that
\begin{align*}
\phi \left (x, \beta_0 \fint_{B}|f| \, dy \right ) \leq \fint_{B} \phi(y,f) \, dy + 1,
\end{align*}
for every ball $B$ and $f \in L^{\phi}(B)$ with $\|f\|_{L^{\phi}(B)}\leq 1$.
\end{lem}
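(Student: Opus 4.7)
The plan is to reduce the claim to a Jensen-type bound for the ``lower envelope'' $\phi^-_B(s):=\inf_{y\in B\cap\Omega}\phi(y,s)$, and then use (A1) to convert that bound into a pointwise estimate on $\phi(x,\cdot)$. Denote $t:=\fint_B|f|\,dy$. I would first pin down the range of $t$: the hypothesis $\|f\|_{L^\phi(B)}\le 1$ and the unit ball property \eqref{eq:unit-ball-property} give $\int_B\phi(y,|f|)\,dy\le 1$, i.e.\ $\fint_B\phi(y,|f|)\,dy\le 1/|B|$. Combined with a quasi-Jensen inequality for $\phi^-_B$ (addressed in the last paragraph), this shows $ct\le(\phi^-_B)^{-1}(1/|B|)$ for an absolute constant $c>0$ depending only on the $\Phi_w$-constants of~$\phi$.

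With $t$ now in a usable range, I would split into two regimes. If $ct\le 1$, then choosing $\beta_0\le c\beta$ (with $\beta$ the constant in (A0)) gives $\beta_0 t\le\beta$, and the monotonicity of $\phi$ together with (A0) yields $\phi(x,\beta_0 t)\le \phi^+(\beta)\le 1$, which is absorbed into the ``$+1$'' on the right-hand side. If $ct>1$, then $s:=ct$ lies in the interval $[1,(\phi^-_B)^{-1}(1/|B|)]$ on which (A1) is available. Setting $\beta_0:=c\beta$ with $\beta$ now the constant from (A1), I would estimate
\[
\phi(x,\beta_0 t)\le \phi^+_B(\beta_0 t)=\phi^+_B(\beta\cdot ct)\le \phi^-_B(ct),
\]
and then use the quasi-Jensen inequality for $\phi^-_B$ a second time to obtain $\phi^-_B(ct)\le \fint_B\phi(y,|f|)\,dy$, completing the estimate.

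\textbf{The main obstacle} is the Jensen-type inequality for $\phi^-_B$. Since $\phi^-_B$ is an infimum of a family of $\Phi_w$-functions it need not be convex, so the classical Jensen inequality does not apply directly. The way around this is to exploit the property, inherited from $\phi\in\Phi_w$, that $s\mapsto\phi^-_B(s)/s$ is almost increasing. From this one derives a quasi-convexity estimate sufficient to move an integral average inside $\phi^-_B$ at the cost of a multiplicative constant, which is precisely the constant $c$ threaded through the above argument and ultimately folded into the definition of $\beta_0$. What remains is the bookkeeping of the various constants $\beta$ supplied by (A0) and (A1) so that a single $\beta_0>0$ works in both regimes.
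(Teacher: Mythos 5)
Your plan is correct and follows the standard route to these key/Jensen estimates; the paper itself does not supply a proof but refers to \cite{HarHK18,Has15,Has16}, and your case split on $t:=\fint_B|f|\,dy$ together with the use of (A0) for $ct\le 1$ and (A1) plus Jensen for $ct>1$ is essentially that argument.

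One place where care is needed is the quasi-Jensen inequality for $\phi^-_B$: you must arrange it in the form
$\phi^-_B\bigl(c\,\fint_B|f|\,dy\bigr)\le\fint_B\phi^-_B(|f|)\,dy$,
with the loss appearing \emph{inside} $\phi^-_B$ and \emph{no} multiplicative factor on the right-hand side. A casual application of an equivalent convex $\Phi$-function gives the estimate with an extra factor $L$ on the right, and that factor cannot be swallowed here: it would push $ct$ only into $[1,(\phi^-_B)^{-1}(L/|B|)]$, which is outside the window where (A1) applies, and (A1) is exactly what you need to remove the extra factor. The clean version does hold and is obtained by a more careful choice of minorant: with $p(s):=\sup_{\tau\le s}\phi^-_B(\tau)/\tau$ and $\eta(t):=\int_0^t p(s)\,ds$ one has $\eta$ convex, $\eta(t)\le L\phi^-_B(t)$ and $\eta(2t)\ge\phi^-_B(t)$; rescaling to $\tilde\eta(t):=\eta(t/L^2)$ and using (aInc)$_1$ once more gives $\tilde\eta\le\phi^-_B\le\tilde\eta(2L^2\,\cdot)$, from which Jensen yields precisely $\phi^-_B\bigl(\tfrac1{2L^2}\fint_B|f|\bigr)\le\fint_B\phi^-_B(|f|)$. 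With $c:=\tfrac1{2L^2}$ and $\beta_0:=c\min\{\beta_{(\mathrm{A}0)},\beta_{(\mathrm{A}1)}\}$ your two cases then go through exactly as written, and the constant bookkeeping closes.
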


The proof of next proposition can be found in \cite[Proposition 6.3.13]{HarH18} and is the local version of the Sobolev--Poincar\'e inequality. One of the main ingredients in proving Theorem 1.1 is to use this inequality also with balls that overlap the complement of $\Omega$. As with the Jensen's inequality, the constant $\beta_1$ can be transferred to the right-hand side as $C$ with (aDec).

\begin{prop}[Sobolev--Poincar\'e inequality]
 \label{pro:Poincare_Sebastian_s}
 Let $\phi^{1/s} \in \Phi_w(B)$ satisfy assumptions (A0) and (A1) and let $s \in \left [1, \tfrac{n}{n-1}\right  )$. Then there exists a constant $\beta_1=\beta_1(n,s,\phi)$ 
such that
\begin{align*}
\fint_{B} \phi\bigg (x, \beta_1 \frac{|v-v_{B}|}{\diam(B)} \bigg)\,dx \leq \bigg(\fint_{B}\phi(x, |\nabla v|)^{\frac1s}\,dx \bigg )^s + 1
\end{align*}
for every $v\in W^{1,1}(B)$ with $\|\nabla v\|_{\phi^{1/s}} \leq 1$. 
\end{prop}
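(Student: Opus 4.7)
The plan is to introduce the auxiliary weak $\Phi$-function $\tilde\phi:=\phi^{1/s}$, which by hypothesis lies in $\Phi_w(B)$ and satisfies (A0) and (A1), and to combine the classical $W^{1,1}(B)\hookrightarrow L^s(B)$ Sobolev--Poincar\'e embedding (available because $s<\tfrac{n}{n-1}$) with the generalized Jensen-type inequality of Lemma \ref{lem:jensen}. The unit-ball property \eqref{eq:unit-ball-property} applied to $\tilde\phi$ turns the normalization $\|\nabla v\|_{\tilde\phi}\le 1$ into the modular bound
\begin{align*}
\int_B\tilde\phi(y,|\nabla v|)\,dy=\int_B\phi(y,|\nabla v|)^{1/s}\,dy\le 1,
\end{align*}
so $|\nabla v|$ lies in the unit modular ball for $\tilde\phi$ and Lemma \ref{lem:jensen} becomes applicable with input $|\nabla v|$ on every sub-ball of $B$.

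The asserted inequality can be rewritten as $\fint_B\tilde\phi(x,\beta_1|v-v_B|/\diam B)^s\,dx\le\bigl(\fint_B\tilde\phi(y,|\nabla v|)\,dy\bigr)^s+1$, i.e.\ as an $L^s$-norm Sobolev--Poincar\'e estimate for the composition $x\mapsto\tilde\phi(x,|v-v_B|/\diam B)$. To prove it, I would start from the Riesz-potential representation $|v(x)-v_B|\le C_n\int_B|\nabla v(y)||x-y|^{1-n}\,dy$ and perform a dyadic decomposition over sub-balls $B_k(x):=B(x,2^{-k}\diam B)$, which yields
\begin{align*}
\frac{|v(x)-v_B|}{\diam B}\le C_n\sum_{k\ge 0}2^{-k}\fint_{B_k(x)\cap B}|\nabla v|\,dy\quad\text{for a.e.\ }x\in B.
\end{align*}
Applying $\tilde\phi(x,\beta_1\,\cdot\,)$ with $\beta_1$ chosen small enough that Lemma \ref{lem:jensen} applies on each $B_k(x)\cap B$, summing the geometric series, raising to the $s$-th power (using $\tilde\phi^{\,s}=\phi$), and integrating in $x\in B$ then produces the stated inequality; the Hardy--Littlewood maximal inequality at level $L^s$ (valid because $s>1$) handles the passage to the integrated form on the right-hand side. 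The constant $\beta_1=\beta_1(n,s,\phi)$ collects the dimensional constant $C_n$, the geometric-series sum, and the Jensen constant $\beta_0$ from Lemma \ref{lem:jensen}.

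The main obstacle is the $x$-dependence of $\tilde\phi$: the sub-balls $B_k(x)$ appearing in the dyadic decomposition are centered at varying points $x\in B$, so Lemma \ref{lem:jensen} must be applied uniformly in $x$. This uniformity is precisely the content of (A1), which controls the oscillation of $\tilde\phi$ across every ball and without which no $x$-independent constant could be extracted from the Jensen step. The restriction $s\in[1,\tfrac{n}{n-1})$ enters through the underlying $W^{1,1}\hookrightarrow L^s$ Sobolev embedding: at or above the critical exponent $n/(n-1)$ the Riesz-potential estimate no longer maps $L^1$ into $L^s$, and the $s$-th power integration in the final step would diverge.
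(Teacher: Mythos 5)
The paper does not prove this proposition itself; it simply cites \cite[Proposition 6.3.13]{HarH18}, so there is no ``paper's own proof'' to compare against line by line. But your outline does not prove the stated inequality, and the gap is not a technicality.

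Track what your argument actually produces. Set $g(y):=\tilde\phi(y,|\nabla v|)=\phi(y,|\nabla v|)^{1/s}$. After the dyadic decomposition, applying Lemma \ref{lem:jensen} on each shell $B_k(x)$ and summing the geometric series, the sharpest pointwise bound you can hope for is
\begin{align*}
\tilde\phi\Big(x,\beta_1\tfrac{|v(x)-v_B|}{\diam B}\Big)\lesssim M\big[g\,\chi_B\big](x)+1 ,
\end{align*}
because each term $\fint_{B_k(x)\cap B} g\,dy$ is dominated by the Hardy--Littlewood maximal function of $g$ at $x$, and nothing better. Raising to the $s$-th power and invoking $M:L^s\to L^s$ then yields
\begin{align*}
\fint_B\phi\Big(x,\beta_1\tfrac{|v-v_B|}{\diam B}\Big)\,dx\ \lesssim\ \fint_B g(y)^s\,dy+1\ =\ \fint_B \phi(y,|\nabla v|)\,dy+1,
\end{align*}
whereas the target is $\big(\fint_B g\,dy\big)^s+1=\big(\fint_B\phi(y,|\nabla v|)^{1/s}\,dy\big)^s+1$. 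Since $t\mapsto t^s$ is convex for $s\geq 1$, Jensen gives $\big(\fint_B g\big)^s\leq\fint_B g^s$, so your conclusion is strictly weaker. This is not a cosmetic loss: the reverse--H\"older form $\fint_B f\lesssim\big(\fint_{3B}f^{1/s}\big)^s+\dots$ with exponent $1/s<1$ is exactly what Lemma \ref{lem:Gehring} needs in the proof of Theorem \ref{thm:main-theorem}; with $\fint_B\phi(\cdot,|\nabla v|)$ on the right the Gehring step collapses.

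The source of the loss is the order of operations. The Riesz potential $I_1$ enjoys an $L^1(B)\to L^s(B)$ gain for $s<\tfrac{n}{n-1}$; the maximal operator does not (it is only $L^1\to L^{1,\infty}$). By applying the Jensen estimate shell by shell \emph{before} integrating, you convert every local average into a value of $M$ and thereby trade the Riesz-potential gain for the maximal operator, which is too weak to produce $\big(\fint_B g\big)^s$. A correct proof must exploit the $L^1\to L^s$ gain of the Sobolev embedding before the averages are absorbed into a sublinear operator --- this is what the ``Key estimate'' machinery of \cite{HarH18} is built to do. Two smaller issues: the endpoint $s=1$ is allowed in the statement but your $L^s$-maximal argument requires $s>1$; and ``summing the geometric series'' through a weak $\Phi$-function (which is only almost convex and only left-continuous) needs a genuine argument, as does the accumulation of the additive $+1$ from each application of Lemma \ref{lem:jensen}.
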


The following is a classical iteration lemma. For the proof, see for example  \cite[Lemma 4.2]{HarHT17}.

\begin{lem}
\label{lem:iteration}
Let $Z$ be a bounded non-negative function in the interval $[r, R] \subset \R$ and let $X: [0, \infty) \to \R$ be an increasing function which is doubling. Assume that there exists $\theta \in [0,1)$ such that
\begin{align*}
Z(s) \leq X(\tfrac{1}{t-s}) + \theta Z(t) 
\end{align*}
for all $r \leq s < t \leq R$. Then
\begin{align*}
Z(r) \lesssim X(\tfrac{1}{R-r}),
\end{align*}
where the implicit constant depends only on the doubling constant and $\theta$.
\end{lem}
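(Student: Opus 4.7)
My plan is to iterate the hypothesis along a geometric sequence of points in $[r,R]$, converting the one-step bound into a telescoping series whose convergence is controlled by balancing the doubling of $X$ against the contraction factor $\theta$.

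Fix a parameter $\tau \in (0,1)$ to be chosen later, set $s_0 := r$ and $s_{i+1} := s_i + (1-\tau)\tau^i(R-r)$. Then $s_i \nearrow R$ and $s_{i+1}-s_i = (1-\tau)\tau^i(R-r)$ is geometric. Applying the hypothesis at each pair $(s_i,s_{i+1})$ and telescoping, together with the fact that $Z$ is bounded so that $\theta^k Z(s_k) \to 0$, yields
\[
Z(r) \leq \sum_{i=0}^{\infty} \theta^i X\!\left(\tfrac{1}{(1-\tau)\tau^i (R-r)}\right).
\]

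I would then estimate each summand using the iterated form of the doubling property, namely $X(\lambda t) \leq C(L)\,\lambda^{Q} X(t)$ for $\lambda \geq 1$ with $Q := \log_2 L$, exactly as in \eqref{equ:doubling_iteration}. This bounds each term by $C(\tau,L)\,\tau^{-iQ} X(\tfrac{1}{R-r})$, so the series reduces to a geometric one with ratio $\theta\tau^{-Q}$ multiplied by $X(\tfrac{1}{R-r})$.

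The only real obstacle, and the reason for keeping $\tau$ as a free parameter rather than fixing $\tau = 1/2$, is ensuring that this geometric series actually converges. Since $\theta<1$, the ratio $\theta\tau^{-Q}$ is less than $1$ as soon as $\tau > \theta^{1/Q}$; choosing any such $\tau$ (depending only on $\theta$ and $L$) gives both convergence of the series and an overall constant depending only on $\theta$ and $L$, as claimed.
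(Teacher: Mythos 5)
Your proof is correct and is the standard iteration argument for this lemma; the paper does not present its own proof but instead cites \cite[Lemma 4.2]{HarHT17}, where essentially this geometric-sequence-with-doubling-exponent balancing is carried out. The key choice $\tau \in (\theta^{1/Q}, 1)$ with $Q=\log_2 L$ is exactly the right one, and the telescoping plus boundedness of $Z$ handles the tail term cleanly.
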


The following form of Gehring's lemma can be found from \cite[Theorem 6.6 and Corollary 6.1]{Giusti}.
\begin{lem}[Gehring's lemma]\label{lem:Gehring}
Let $f \in L^1(B)$ be non-negative. Assume that $g\in L^q(4B)$ for some $q>1$ 
and that there exists $s \in(0, 1)$ such that 
\[
\fint_{B} f \, dx \lesssim \bigg( \fint_{3B} f^s \, dx\bigg)^{\frac1s} + \fint_{3B} g \, dx
\]
for every ball $B$. 
Then there exists $t>1$ such that
\[
\bigg(\fint_{B} f^t \, dx \bigg)^{\frac1t} \lesssim \fint_{4B} f \, dx + \left (\fint_{4B} g^t \, dx\right ) ^{\frac{1}{t}}.
\]
\end{lem}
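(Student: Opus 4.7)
The plan is to prove this via the classical stopping-time / Calderón--Zygmund argument that self-improves a reverse Hölder inequality, following Gehring and Giaquinta--Modica (the form in Giusti, Chapter 6, being the model). The key observation is that the hypothesis is an asymmetric reverse Hölder inequality for $f$: the $L^1$-mean on $B$ is controlled by the $L^s$-mean on $3B$ (with $s<1$) plus a bulk term in $g$, and this asymmetry forces $f$ to concentrate in a quantitative way that pushes it into $L^t$ for some $t>1$.

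First I would fix the reference ball $B$ inside an ambient ball on which the hypothesis is valid for every sub-ball, and set the threshold $\lambda_0 \sim \fint_{4B} f + \big(\fint_{4B} g^q\big)^{1/q}$. For each height $\lambda > \lambda_0$ I perform a stopping-time selection at $\lambda$: for every Lebesgue point $x \in B$ with $f(x) > \lambda$, choose the largest radius $r_x$ for which $\fint_{B(x,r_x)} f \, dy > \lambda$; such a radius exists and is small because the average of $f$ over the ambient ball lies below $\lambda$. A Vitali extraction produces a disjoint family $\{B_i\}$ with $3 B_i \subset 4B$ and $\{f > \lambda\} \cap B \subset \bigcup 5 B_i$, while on each selected ball $\lambda < \fint_{B_i} f \, dy \lesssim \lambda$.

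Second, I apply the hypothesis on each $B_i$ and split the integrals over $3B_i$ at a small intermediate height $\eta\lambda$ with $\eta \in (0,1)$. The part of $3B_i$ where $f \leq \eta\lambda$ contributes a term bounded by $\eta^s \lambda^s$ in the reverse inequality, which is absorbed into the left since $\eta$ is small; the contribution of $\{f > \eta\lambda\}$, together with the analogous splitting for $g$, yields, after summing the disjoint $B_i$, the distribution-function estimate
\[
\int_{\{f > \lambda\} \cap B} f \, dy \lesssim \eta^{1-s} \int_{\{f > \eta\lambda\} \cap 4B} f \, dy + \int_{\{g > c\eta\lambda\} \cap 4B} g \, dy.
\]

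Finally, multiply by $\lambda^{t-2}$ and integrate over $\lambda > \lambda_0$. The layer-cake identity converts each level-set integral into an $L^t$-mean, and a change of variable turns the coefficient of $\int_{4B} f^t$ on the right-hand side into an explicit power of $\eta$ and $t-1$. Choosing $\eta$ small and then $t$ close enough to $1$ makes this coefficient strictly less than one, so the bad $f^t$ term absorbs into the left and the desired $L^t$ bound follows; the boundary contribution from $\lambda_0$ produces exactly the $\fint_{4B} f$ and $\big(\fint_{4B} g^t\big)^{1/t}$ pieces in the conclusion. The main obstacle is the joint calibration of $\eta$ and $t-1$, together with the Vitali/stopping-time bookkeeping needed to ensure that the selected balls stay inside the enlargement $4B$ and that the absorption happens on the same ball on which the conclusion is stated.
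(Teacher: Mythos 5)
The paper does not prove this lemma itself---it is quoted as a known result, with the reference to Giusti (Theorem~6.6 and Corollary~6.1) serving as the proof. Your sketch reconstructs exactly the stopping-time / Calder\'on--Zygmund argument of Gehring and Giaquinta--Modica that underlies Giusti's version, so the overall strategy is the right one and is consistent with what the paper appeals to.

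There are, however, two related inaccuracies in the way you set up the absorption that would have to be repaired in a full write-up. First, the exponent on $\eta$ in your distribution inequality is reversed. Carrying out the splitting on a Vitali/CZ ball $B_i$ (where $\fint_{B_i} f \sim \lambda$), the reverse H\"older inequality gives $\lambda^s |B_i| \lesssim \int_{3B_i \cap \{f>\eta\lambda\}} f^s$ once $\eta$ is small enough to make the low part $(\eta\lambda)^s$ negligible; since $s-1<0$, on $\{f>\eta\lambda\}$ one has $f^s \le (\eta\lambda)^{s-1} f$, and this yields
\[
\int_{\{f>\lambda\}\cap B} f \,dx \;\lesssim\; \eta^{s-1}\int_{\{f>\eta\lambda\}\cap 4B} f \,dx \;+\; \text{(term in }g\text{)},
\]
with coefficient $\eta^{s-1}$, which is \emph{large} for small $\eta$, not $\eta^{1-s}$. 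Second, and as a consequence, the absorption cannot come from choosing $\eta$ small: after multiplying by $\lambda^{t-2}$ and integrating, the change of variables $\mu=\eta\lambda$ produces the overall coefficient $\eta^{s-1}\cdot\eta^{1-t}=\eta^{s-t}>1$, which grows as $\eta\downarrow0$. The estimate closes only because the Fubini step on the right-hand side (best seen by keeping $f^s$ rather than trading it for $f$) produces an extra factor $\tfrac{t-1}{t-s}$, and the product $\tfrac{t-1}{t-s}\,\eta^{s-t}\to 0$ as $t\to 1^+$ with $\eta$ held fixed. In other words, $\eta$ is a fixed structural parameter chosen once in the splitting step, and the self-improvement is driven entirely by taking $t$ close to $1$. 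If you rewrite the argument keeping $f^s$ on the right of the distribution inequality (equivalently, substitute $F=f^s$, $q=1/s$, so the hypothesis becomes a genuine $q$-reverse H\"older inequality for $F$), these bookkeeping issues disappear and the argument matches the Giaquinta--Modica / Giusti lemma directly.
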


\section{Properties of local minimizers and local superminimizers}
\label{sec:esteongelma}

In this paper we do not only cover (local) minimizers but also the minimizer of the so called obstacle problem. Since minimizers of obstacle problems and local superminimizers are closely related, we collect basic results regarding local superminimizers also.

\begin{defn}
Let $\psi : \Omega \to  [-\infty , \infty )$ be a function, called \emph{obstacle}, and let $f \in W^{1,\phi}(\Omega)$ be a function, which assigns the boundary values. We define admissible functions for the obstacle problem as a set
\begin{align*}
\mathcal{K}^{f}_{\psi}(\Omega) := \{ u \in W^{1, \phi}(\Omega) : u \geq \psi \text{ a.e. in } \Omega, u-f \in W^{1,\phi}_0(\Omega) \}.
\end{align*}
Additionally, we say that a function $u \in \mathcal{K}^{f}_{\psi}(\Omega)$ is a \emph{minimizer of the $\mathcal{K}^{f}_{\psi}(\Omega)$-obstacle problem} if
\begin{align*}
\int_{\Omega} \phi(x,|\nabla u|) \, dx \leq  \int_{\Omega} \phi(x,|\nabla v|) \, dx
\end{align*}
for all $v \in K^{f}_\psi(\Omega)$.
\end{defn}
If $u$ is a minimizer of the $\mathcal{K}^{f}_{-\infty}(\Omega)$-obstacle solution, we call it a \emph{minimizer} in $\Omega$.
\begin{defn}
\label{defn:local-min}
Let $\phi \in \Phi_w(\Omega)$. A function $u \in W^{1,\phi}_\loc (\Omega)$ is a \emph{local minimizer} of the $\phi$-energy in $\Omega$ if
\begin{align*}
\int_{\{v \not = 0\}} \phi(x, |\nabla u|) \, dx \leq \int_{\{v \not = 0\}} \phi(x, |\nabla (u+v)|) \, dx
\end{align*}
for all $v \in W^{1, \phi}(\Omega)$ with $\spt v \subset \Omega$, where $\spt v$ is the smallest closed set such that $v$ is non-zero almost everywhere in that set.

If the inequality is assumed only for all nonnegative or nonpositive $v$, then $u$ is called a \emph{local superminimizer} or \emph{local subminimizer}, respectively.
\end{defn}

The next lemma shows that we can often assume the test function $v$ to be pointwise bounded.
\begin{lem}
\label{lem:bdd-local-min}
Let $\phi \in \Phi_w(\Omega)$ satisfy (aDec). If $u \in W^{1,\phi}_{\loc}(\Omega)$ satisfies
\begin{align*}
\int_{\{v \not=0\}} \phi(x, |\nabla u|) \, dx \leq \int_{\{v \not =0\}} \phi(x,|\nabla(u+v)|) \, dx
\end{align*}
for all bounded $v \in W^{1,\phi}(\Omega)$ with $\spt v \subset \Omega$, then $u$ is a local minimizer of the $\phi$-energy in $\Omega$.
\end{lem}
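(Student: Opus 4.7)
The plan is a standard truncation argument: approximate an arbitrary $v \in W^{1,\phi}(\Omega)$ with $\spt v \subset \Omega$ by bounded truncations, apply the assumed inequality to the truncations, and pass to the limit. Concretely, for each $k \in \N$ set $v_k := \max\{-k, \min\{k, v\}\}$. Then $v_k \in W^{1,\phi}(\Omega) \cap L^{\infty}(\Omega)$, $\spt v_k \subset \spt v \Subset \Omega$, and the standard chain rule for Sobolev functions gives $\nabla v_k = \chi_{\{|v| < k\}} \nabla v$ almost everywhere. A key observation is that $\{v_k \not = 0\} = \{v \not = 0\}$, since $v_k$ vanishes at a point exactly when $v$ does. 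Applying the hypothesis to the bounded test function $v_k$ therefore yields
\begin{align*}
\int_{\{v \not = 0\}} \phi(x, |\nabla u|)\,dx \leq \int_{\{v \not = 0\}} \phi(x, |\nabla(u+v_k)|)\,dx.
\end{align*}

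The second step is to send $k \to \infty$ on the right-hand side by dominated convergence. By (aDec)/doubling and \eqref{eq:phi-triangle}, together with $|\nabla v_k| \leq |\nabla v|$ and monotonicity of $\phi$ in $t$,
\begin{align*}
\phi(x, |\nabla (u+v_k)|) \leq L\,\phi(x, |\nabla u|) + L\,\phi(x, |\nabla v|)
\end{align*}
uniformly in $k$. Since $\spt v \Subset \Omega$, one has $\nabla u \in L^\phi(\spt v)$ and $\nabla v \in L^\phi(\Omega)$, and iterating the doubling inequality \eqref{equ:doubling_iteration} shows that $\varrho_\phi$-finiteness for some scaling factor entails $\varrho_\phi$-finiteness for every scaling factor, so both $\phi(\cdot, |\nabla u|)$ and $\phi(\cdot, |\nabla v|)$ are in $L^1(\spt v)$. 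For the pointwise limit, at every $x$ where $v(x)$ is finite the truncation satisfies $v_k(x) = v(x)$ and $\nabla v_k(x) = \nabla v(x)$ as soon as $k > |v(x)|$, so $\phi(x, |\nabla(u+v_k)|) = \phi(x, |\nabla(u+v)|)$ for all sufficiently large $k$ and the integrand on the right converges pointwise a.e. Passing to the limit yields the local minimizer inequality for the original $v$, completing the proof.

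The only mildly delicate point will be the pointwise convergence step: under the weak $\Phi$-function axioms $t \mapsto \phi(x, t)$ is only assumed left-continuous, so pointwise convergence of arbitrary arguments need not transfer through $\phi$. In this argument that issue never arises because the truncation sequence $|\nabla(u+v_k)|(x)$ is in fact eventually constant at almost every $x$, which is strictly stronger than continuity-based convergence and requires no additional assumption on $\phi$.
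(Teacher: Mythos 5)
Your proof is correct and uses the same truncation $v_k=\max\{\min\{v,k\},-k\}$ as the paper; the only difference is how the limit $k\to\infty$ is justified. The paper splits the integral over $\{v\neq 0\}$ into the pieces where $|v|\le k$ and $|v|>k$ and applies monotone convergence to the increasing and decreasing parts, whereas you bound the integrand uniformly by $L\phi(\cdot,|\nabla u|)+L\phi(\cdot,|\nabla v|)$ (integrable on $\spt v\Subset\Omega$ by local membership, (aDec), and boundedness of $\Omega$) and invoke dominated convergence, with the pointwise limit coming from the eventual constancy $\nabla v_k=\nabla v$ a.e.\ once $k>|v(x)|$. Both routes are standard; your explicit remark on why left-continuity of $t\mapsto\phi(x,t)$ is not an obstruction is a welcome clarification that the paper leaves implicit.
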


\begin{proof}
Since $\phi$ satisfies (aDec), bounded Sobolev functions are dense in $W^{1,\phi}(\Omega)$ \cite[Lemma 6.4.2]{HarH18}. From the proof we see that if $v \in W^{1,\phi}(\Omega)$, then truncations of $v$ at level $k$, $v_k = \max\{ \min\{v(x), k\}, -k\}$, converge to $v$ in $W^{1,\phi}(\Omega)$. Additionally, $\spt v_k = \spt v$. Therefore, let $v$ be as in Definition \ref{defn:local-min} and $v_k$ be its truncations. Then, as $u$ is assumed to be a local minimizer when tested with bounded Sobolev functions with compact support and convergence in modular and norm are equivalent as $\phi$ satisfies (aDec), we get
\begin{align*}
\int_{\{v \not =0\}}\phi(x, |\nabla u|) \, dx = \int_{\{v_k \not =0\}} \phi(x, |\nabla u|) \, dx &\leq \int_{\{v_k \not =0\}} \phi(x, |\nabla(u+v_k)|) \, dx \\
&= \int_{\{v \not =0\}} \phi(x, |\nabla(u+v_k)|) \, dx.
\end{align*}
Next, as $v_k$ is a truncation, we split the integration domain accordingly
\begin{align*}
\int_{\{v \not =0\}} \phi(x, |\nabla(u+v_k)|) \, dx &= \int_{\{v \not =0\}} \chi_{\{|v| \leq k\} }\phi(x, |\nabla(u+v)|) \, dx  \\
& \quad \quad + \int_{\{v \not =0\}} \chi_{\{|v| > k\}}\phi(x, |\nabla(u+k)|) \, dx \\
&= \int_{\{v \not =0\}} \chi_{\{|v| \leq k\} }\phi(x, |\nabla(u+v)|) \, dx \\
&\quad \quad  + \int_{\{v \not =0\}} \chi_{\{|v| > k\}} \phi(x, |\nabla u|) \, dx \\
&\to \int_{\{v\not=0\}} \phi(x, |\nabla (u+v)|) \, dx
\end{align*}
by Lebesgue's monotone converge theorem for increasing and decreasing sequences and the fact that every integral is finite.
Thus combining two previous displays, we see that $u$ is a local minimizer of the $\phi$-energy in $\Omega$.
\end{proof}

Next we give a suitably general condition for non-emptiness of $\mathcal{K}^{f}_{\psi}(\Omega)$ and flexibility for the boundary function $f$. We then show that being a minimizer of the obstacle problem is a local property with suitable boundary values. For the rest of the paper we implicitly assume that $\mathcal{K}^{f}_{\psi}(\Omega)$ is non-empty.

\begin{prop}
\label{prop:K-nonempty}
Let $\phi \in \Phi_w(\Omega)$ satisfy (A0), (A1) and (aDec) and let $f, \psi \in W^{1,\phi}(\Omega)$. Then $\mathcal{K}_{\psi}^{f}(\Omega) \not = \emptyset$ if and only if $(\psi-f)_{+} \in W^{1,\phi}_0(\Omega)$.
\end{prop}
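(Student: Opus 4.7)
The plan is to prove the two implications separately, with the backward direction producing an explicit admissible function and the forward direction squeezing $(\psi-f)_+$ between functions with zero boundary values via Lemma \ref{lem:u-between-compacts}.

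For the backward implication, assume $(\psi-f)_+ \in W^{1,\phi}_0(\Omega)$ and set $u := f + (\psi-f)_+ = \max\{f,\psi\}$. Then $u \in W^{1,\phi}(\Omega)$ as the sum of a $W^{1,\phi}$-function and a $W^{1,\phi}_0$-function, clearly $u \geq \psi$ almost everywhere, and $u - f = (\psi-f)_+ \in W^{1,\phi}_0(\Omega)$. Hence $u \in \mathcal{K}^f_\psi(\Omega)$, which is therefore non-empty.

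For the forward implication, fix any $v \in \mathcal{K}^f_\psi(\Omega)$, so that $v - f \in W^{1,\phi}_0(\Omega)$ and $v \geq \psi$ almost everywhere in $\Omega$. First I would show $(v-f)_+ \in W^{1,\phi}_0(\Omega)$: applying Lemma \ref{lem:min-zero-valued} with the $W^{1,\phi}_0$-function $v-f$ and the non-negative constant function $0 \in W^{1,\phi}(\Omega)$ yields $\min\{v-f,0\} \in W^{1,\phi}_0(\Omega)$, so
\begin{align*}
(v-f)_+ = (v-f) - \min\{v-f,0\} \in W^{1,\phi}_0(\Omega).
\end{align*}
From $v \geq \psi$ one deduces the pointwise sandwich
\begin{align*}
0 \leq (\psi-f)_+ \leq (v-f)_+ \quad \text{a.e.\ in } \Omega,
\end{align*}
and since $\psi, f \in W^{1,\phi}(\Omega)$ the positive part $(\psi-f)_+$ also lies in $W^{1,\phi}(\Omega)$. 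Lemma \ref{lem:u-between-compacts} applied with the functions $0$ and $(v-f)_+$ in $W^{1,\phi}_0(\Omega)$ then gives $(\psi-f)_+ \in W^{1,\phi}_0(\Omega)$, as desired.

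The only non-routine step is the reduction $(v-f)_+ \in W^{1,\phi}_0(\Omega)$: one cannot apply Lemma \ref{lem:min-zero-valued} directly to $\psi - f$ because its membership in $W^{1,\phi}_0(\Omega)$ is precisely what is being proved. The trick is to first transfer the zero boundary values from $v-f$ to its positive part using the constant $0$ as the non-negative companion function, and only then exploit the obstacle inequality $v \geq \psi$ together with Lemma \ref{lem:u-between-compacts}. After that, everything is bookkeeping.
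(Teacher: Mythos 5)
Your proof is correct and follows essentially the same route as the paper: both directions use $u=\max\{\psi,f\}$ for non-emptiness, Lemma \ref{lem:min-zero-valued} with the constant $0$ to get $(v-f)_+ \in W^{1,\phi}_0(\Omega)$, and Lemma \ref{lem:u-between-compacts} to squeeze $(\psi-f)_+$ between $0$ and $(v-f)_+$. The only difference is cosmetic (you write $(v-f)_+=(v-f)-\min\{v-f,0\}$ while the paper uses $(v-f)_+=-\min\{-(v-f),0\}$), and you helpfully make explicit the check that $(\psi-f)_+\in W^{1,\phi}(\Omega)$ needed for Lemma \ref{lem:u-between-compacts}.
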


\begin{proof}
Suppose first that $u \in \mathcal{K}_{\psi}^{f}(\Omega)$. Then by Lemma \ref{lem:min-zero-valued} we see that
\begin{align*}
0 \leq (\psi-f)_{+} \leq (u-f)_+ = - \min\{-(u-f),0\} \in W^{1,\phi}_0(\Omega).
\end{align*}
Now the conclusion follows from Lemma \ref{lem:u-between-compacts}.

Suppose then that $(\psi-f)_{+}\in W^{1,\phi}_0(\Omega)$ and define $u:=\max\{\psi, f\} \in W^{1,\phi}(\Omega)$. Now
\begin{align*}
u-f = \max\{\psi-f, 0\} = (\psi-f)_{+} \in W^{1,\phi}_0(\Omega) \quad \text{and} \quad u\geq \psi \quad \text{in } \Omega.
\end{align*}
Therefore $u \in \mathcal{K}^{f}_{\psi}(\Omega)$.
\end{proof}

As $f$ matters essentially only in the boundary, it can be modified inside $\Omega$. This is useful, as for technical reasons we would like $f$ to be above the obstacle in $\Omega$.

\begin{lem}
\label{lem:f-above-psi}
Let $\phi \in \Phi_w(\Omega)$ satisfy (A0) and (A1) and suppose that $u \in \mathcal{K}_{\psi}^{f}(\Omega)$. Then $u \in \mathcal{K}_{\psi}^{\tilde f}(\Omega)$, where $f \geq \psi$ almoset everywhere in $\Omega$.
\end{lem}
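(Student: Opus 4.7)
The plan is to take the obvious candidate $\tilde f := \max\{f,\psi\}$ and verify the three defining properties of the admissible class $\mathcal{K}_{\psi}^{\tilde f}(\Omega)$, namely that $\tilde f \in W^{1,\phi}(\Omega)$, that $u \ge \psi$ a.e., and that $u - \tilde f \in W^{1,\phi}_0(\Omega)$. The first is immediate since $W^{1,\phi}(\Omega)$ is a lattice (the maximum of two Sobolev functions is Sobolev, with weak gradient obtained by splitting on the set $\{f > \psi\}$ and its complement); the condition $\tilde f \ge \psi$ holds by construction; and $u \ge \psi$ a.e.\ was already assumed since $u \in \mathcal{K}_{\psi}^{f}(\Omega)$.

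The only nontrivial point is the boundary-values condition $u - \tilde f \in W^{1,\phi}_0(\Omega)$. Here I would rewrite
\begin{align*}
u - \tilde f \;=\; u - \max\{f,\psi\} \;=\; \min\{\,u-f,\; u-\psi\,\}.
\end{align*}
By hypothesis $u - f \in W^{1,\phi}_0(\Omega)$, and since $u \ge \psi$ a.e.\ we have $u - \psi \ge 0$ a.e., with $u-\psi \in W^{1,\phi}(\Omega)$. Lemma~\ref{lem:min-zero-valued} then applies directly (with the roles of its ``$u$'' and ``$v$'' played by $u-f$ and $u-\psi$) and yields $\min\{u-f,u-\psi\} \in W^{1,\phi}_0(\Omega)$, as required.

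I do not foresee any real obstacle; the proof is essentially a one-line application of Lemma~\ref{lem:min-zero-valued} once one recognizes the identity $\max\{f,\psi\} = f + (\psi-f)_+$ (equivalently the $\min$-rewriting above). The only subtlety worth flagging is that Lemma~\ref{lem:min-zero-valued} as stated also requires assumption (aDec); so strictly speaking the present statement should either include (aDec) among its hypotheses or invoke only the truncation and lattice properties of $W^{1,\phi}$ (which in turn rely on a doubling-type assumption). Modulo that minor bookkeeping, the argument is complete.
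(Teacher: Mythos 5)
Your identity $u - \tilde f = \min\{u-f,\,u-\psi\}$ is algebraically correct, but the direct application of Lemma~\ref{lem:min-zero-valued} with $v := u-\psi$ requires $u - \psi \in W^{1,\phi}(\Omega)$, and likewise your lattice argument for $\tilde f = \max\{f,\psi\}$ presupposes $\psi \in W^{1,\phi}(\Omega)$. This is a genuine gap: the obstacle is only assumed to be a function $\psi\colon \Omega \to [-\infty,\infty)$. In the applications of this lemma (see Theorems~\ref{thm:main-result} and~\ref{thm:u-is-continuous}) $\psi$ is merely continuous, and in general it may even equal $-\infty$ on a set of positive measure. In such cases $u-\psi$ is not a Sobolev function at all, so Lemma~\ref{lem:min-zero-valued} cannot be invoked as you propose.

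The paper's proof avoids this by never touching $u-\psi$; it writes $\tilde f = f + (\psi - f)_+$ and works only with $(\psi-f)_+$. Since $u \geq \psi$ a.e.\ one has the pointwise sandwich $0 \leq (\psi-f)_+ \leq (u-f)_+ = -\min\{f-u,0\}$, and Lemma~\ref{lem:min-zero-valued} (applied to $f-u \in W^{1,\phi}_0(\Omega)$ and the nonnegative function $0$) gives $\min\{f-u,0\} \in W^{1,\phi}_0(\Omega)$. The sandwich lemma (Lemma~\ref{lem:u-between-compacts}) then yields $(\psi-f)_+ \in W^{1,\phi}_0(\Omega)$, whence $u - \tilde f = (u-f) - (\psi-f)_+ \in W^{1,\phi}_0(\Omega)$ and $\tilde f \in W^{1,\phi}(\Omega)$. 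The decisive point is that the implicit Sobolev requirement is placed on $(\psi-f)_+$ (equivalently on $\tilde f$, which must be in $W^{1,\phi}(\Omega)$ anyway for $\mathcal{K}^{\tilde f}_\psi(\Omega)$ to make sense), not on $\psi$ or $u-\psi$. Your route needs the strictly stronger condition $u-\psi \in W^{1,\phi}(\Omega)$, which is not implied.

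Your observation about (aDec) is correct and worth keeping: both Lemma~\ref{lem:min-zero-valued} and Lemma~\ref{lem:u-between-compacts} assume (aDec), so the hypotheses of Lemma~\ref{lem:f-above-psi} are formally incomplete; this affects the paper's own proof just as much as yours.
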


\begin{proof}
Define $\tilde f := \max\{f, \psi\}$. First, we notice that $\tilde f =(\psi-f)_+ +  f$. Second, from Lemma \ref{lem:min-zero-valued} we deduce
\begin{align*}
0 \leq (\psi-f)_+ \leq \max\{u-f,0\} = - \min\{f-u,0\} \in W^{1,\phi}_0(\Omega).
\end{align*}
Now, as $\phi$ satisfies (A0) and (A1), Lemma \ref{lem:u-between-compacts} implies that $(\psi-f)_+ \in W^{1,\phi}_0(\Omega)$ and it is clear that $u-\tilde f \in W^{1,\phi}_0(\Omega)$. Thus we can use $\tilde f$ instead of $f$ as the function assigning boundary values.
\end{proof}

\begin{lem}
\label{lem:min-local}
Let $\phi \in \Phi_w(\Omega)$. Then a function $u \in W^{1,\phi}(\Omega)$ is a minimizer of the $\mathcal{K}^{u}_{\psi}(\Omega)$-obstacle problem if and only if $u$ is a minimizer of the $\mathcal{K}^{u}_{\psi}(D)$-obstacle problem for every open $D \subset \Omega$.
\end{lem}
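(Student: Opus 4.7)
The ``if'' direction is immediate: taking $D=\Omega$ in the hypothesis recovers the definition of $u$ being a minimizer of the $\mathcal{K}^{u}_{\psi}(\Omega)$-obstacle problem, so no work is needed.

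For the ``only if'' direction, the plan is a standard extension argument. Suppose $u$ is a minimizer of the $\mathcal{K}^{u}_{\psi}(\Omega)$-obstacle problem, fix an open set $D\subset\Omega$, and take an arbitrary competitor $v\in \mathcal{K}^{u}_{\psi}(D)$. I would define the extension
\[
\tilde v(x):=\begin{cases} v(x) & \text{if } x\in D,\\ u(x) & \text{if } x\in \Omega\setminus D, \end{cases}
\]
and first verify that $\tilde v\in \mathcal{K}^{u}_{\psi}(\Omega)$. The function $\tilde v-u$ equals $v-u$ on $D$ and $0$ on $\Omega\setminus D$; since $v-u\in W^{1,\phi}_0(D)$ can be approximated by elements of $C_0^\infty(D)$, whose zero extensions lie in $C_0^\infty(\Omega)$, the zero extension of $v-u$ belongs to $W^{1,\phi}_0(\Omega)$, so $\tilde v-u\in W^{1,\phi}_0(\Omega)$. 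The obstacle condition $\tilde v\geq\psi$ a.e.\ in $\Omega$ holds on $D$ because $v\in \mathcal{K}^{u}_{\psi}(D)$ and on $\Omega\setminus D$ because $\tilde v=u$ and $u\in \mathcal{K}^{u}_{\psi}(\Omega)$.

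Testing the minimality of $u$ on $\Omega$ against $\tilde v$ then gives
\[
\int_\Omega \phi(x,|\nabla u|)\,dx \;\leq\; \int_\Omega \phi(x,|\nabla \tilde v|)\,dx,
\]
and since $\nabla \tilde v=\nabla u$ almost everywhere on $\Omega\setminus D$, the contributions from $\Omega\setminus D$ cancel from both sides. What remains is
\[
\int_D \phi(x,|\nabla u|)\,dx \;\leq\; \int_D \phi(x,|\nabla v|)\,dx,
\]
and because $v\in \mathcal{K}^{u}_{\psi}(D)$ was arbitrary, this proves that $u$ is a minimizer of the $\mathcal{K}^{u}_{\psi}(D)$-obstacle problem.

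There is no real obstacle in this argument; the only point that requires a little care is the verification that the zero extension of a $W^{1,\phi}_0(D)$-function lies in $W^{1,\phi}_0(\Omega)$, which however is immediate from the $C_0^\infty$-density definition of $W^{1,\phi}_0$ and does not require any of the structural assumptions (A0), (A1), or (aDec) on $\phi$.
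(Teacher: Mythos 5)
Your proof is correct and follows essentially the same route as the paper's: glue $v$ on $D$ to $u$ on $\Omega\setminus D$, verify that the zero extension of $v-u\in W^{1,\phi}_0(D)$ lands in $W^{1,\phi}_0(\Omega)$ via $C_0^\infty$-approximation, test minimality, and cancel the common contribution over $\Omega\setminus D$. The only (minor) difference is that you also spell out the verification $\tilde v\geq\psi$ a.e.\ in $\Omega$, which the paper leaves implicit.
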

\begin{proof}
Let us first suppose that $u$ is a minimizer of the $\mathcal{K}^{u}_{\psi}(\Omega)$-obstacle problem. Let $v \in \mathcal{K}^{u}_{\psi}(D)$.
Since $u-v \in W^{1,\phi}_0(D)$, there exist functions $\eta_j \in C^{\infty}_0(D)$ such that $\eta_j \to u-v \in W^{1,\phi}_0(D)$. By a zero extension we see that $\eta_j \in C^{\infty}_0(\Omega)$ for every $j$, which implies that $u-v$ has a zero extension to $W^{1,\phi}_0(\Omega)$, denoted by $h$. Now we can define
\begin{align*}
\tilde u(x) := 
\begin{cases}
u(x) \quad \text{ if } x \in \Omega \setminus D \\
v(x) \quad \text{ if } x \in D
\end{cases}
\end{align*}
which belongs to $\mathcal{K}^{u}_{\psi}(\Omega)$ as $\tilde u = u-h \in W^{1,\phi}(\Omega)$ and it has the correct boundary values in the Sobolev sense. Now, because $u$ is a minimizer of the $\mathcal{K}^{u}_{\psi}(\Omega)$-obstacle problem, we get
\begin{align*}
\int_{\Omega} \phi(x, |\nabla u|) \, dx \leq \int_{\Omega} \phi(x, |\nabla \tilde u|) \, dx = \int_{D} \phi(x, |\nabla v|) \, dx + \int_{\Omega \setminus D} \phi (x, |\nabla u|) \, dx.
\end{align*}
After subtracting $\int_{\Omega \setminus D}\phi(x, |\nabla u|) \, dx$ from both sides we see that $u$ is also a minimizer of the $\mathcal{K}^{u}_{\psi}(D)$-obstacle problem.

The other direction follows immediately by choosing $D=\Omega$.
\end{proof}

We recall that a solution of the $\mathcal{K}^{f}_{\psi}(\Omega)$-obstacle problem is a local superminimizer \cite[Proposition 7.16]{BjoB11} and an opposite relation also holds. 

\begin{prop}
\label{prop:obs-is-super}
Let $\phi \in \Phi_w(\Omega)$. Then a function $u$ is a local superminimizer in $\Omega$ if and only if $u$ is a minimizer of $\mathcal{K}^{u}_{u}(\Omega')$-obstacle problem for every open $\Omega' \Subset \Omega$.
\end{prop}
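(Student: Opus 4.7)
The plan is to prove both directions by unpacking the definitions and exploiting the standard fact that $\nabla w = 0$ almost everywhere on $\{w = 0\}$. This identity lets me freely pass between the integration domain $\{w \neq 0\}$ (as it appears in the superminimizer inequality) and the full $\Omega'$ (as it appears in the obstacle inequality), which is the only bookkeeping difference between the two notions.

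For the forward implication, assume $u$ is a local superminimizer in $\Omega$ and fix $\Omega' \Subset \Omega$ together with a competitor $v \in \mathcal{K}^{u}_{u}(\Omega')$. Setting $w := v - u$, admissibility gives $w \geq 0$ a.e.\ and $w \in W^{1,\phi}_0(\Omega')$, so its zero extension to $\Omega$ lies in $W^{1,\phi}(\Omega)$ with compact support in $\Omega$ and is therefore an admissible nonnegative test function in Definition \ref{defn:local-min}. This yields
$$\int_{\{w \neq 0\}} \phi(x, |\nabla u|) \, dx \leq \int_{\{w \neq 0\}} \phi(x, |\nabla(u+w)|) \, dx = \int_{\{w \neq 0\}} \phi(x, |\nabla v|) \, dx.$$
On $\{w = 0\} \cap \Omega'$ the weak gradient satisfies $\nabla v = \nabla u$ a.e., so adding $\int_{\{w=0\}\cap\Omega'} \phi(x,|\nabla u|)\,dx$ to both sides turns the inequality into the obstacle inequality over all of $\Omega'$.

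For the converse, suppose $u$ solves the $\mathcal{K}^{u}_{u}(\Omega')$-obstacle problem for every open $\Omega' \Subset \Omega$, and let $v \in W^{1,\phi}(\Omega)$ be nonnegative with $\spt v$ compactly contained in $\Omega$. Choose $\Omega'$ open with $\spt v \subset \Omega' \Subset \Omega$ and $v$ vanishing near $\partial\Omega'$; then $v|_{\Omega'} \in W^{1,\phi}_0(\Omega')$, and since $u + v \geq u$ with $(u+v) - u = v$, we have $u + v \in \mathcal{K}^{u}_{u}(\Omega')$. The hypothesis gives $\int_{\Omega'} \phi(x, |\nabla u|)\,dx \leq \int_{\Omega'} \phi(x, |\nabla(u+v)|)\,dx$, and cancelling the contributions from $\{v = 0\}\cap\Omega'$ (where $\nabla v = 0$ a.e., so the two integrands coincide) produces the local superminimizer inequality.

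The only technical subtlety is the bookkeeping between support conditions and zero-boundary conditions: one needs that a function in $W^{1,\phi}_0(\Omega')$ extends by zero across $\partial\Omega'$ to a $W^{1,\phi}$ function on $\Omega$, and conversely that a compactly supported $W^{1,\phi}(\Omega)$ function, restricted to any open neighbourhood of its support, lies in the zero-trace space of that neighbourhood. Both facts are routine consequences of approximation by $C_0^\infty$ functions and extension by zero, and neither requires assumptions beyond $\phi \in \Phi_w(\Omega)$ together with the implicit doubling-type hypotheses used throughout the paper.
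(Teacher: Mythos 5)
Your proof is correct and follows essentially the same approach as the paper. The only cosmetic difference is in the forward direction: the paper sets $\eta := \max\{u,v\} - u$ and then observes $\max\{u,v\} = v$ a.e.\ because $v\geq u$, whereas you take $w := v - u$ directly; these are the same function, so the arguments coincide.
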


\begin{proof}
Suppose first that $u$ is a local superminimizer in $\Omega$. Since $\Omega' \Subset \Omega$ we have $u \in W^{1,\phi}(\Omega')$ and therefore $u \in \mathcal{K}^{u}_{u}(\Omega')$. Now, let $v \in \mathcal{K}^{u}_{u}(\Omega')$ be arbitrary and denote $w:= \max\{u,v\}$. Clearly $w=v$ almost everywhere in $\Omega'$ and thus $\eta := w-u \in W^{1,\phi}_0(\Omega')$ is nonnegative. Now, we use the local superminimality of $u$ in the set $\{\eta\not=0\}$ and the fact that $\nabla \eta =0$ almost everywhere in the set $\{\eta=0\}$ to get
\begin{align*}
\int_{\Omega'} \phi(x, |\nabla u|) \, dx &= \int_{\{\eta=0\}} \phi(x, |\nabla u|) \, dx +\int_{\{\eta \not=0\}} \phi(x, |\nabla u|) \, dx  \\
&\leq \int_{\{\eta=0\}} \phi(x, |\nabla (u+\eta)|) \, dx +\int_{\{\eta \not=0\}} \phi(x, |\nabla (u+\eta)|) \, dx \\
&=\int_{\Omega'} \phi(x, |\nabla w|) \, dx =\int_{\Omega'} \phi(x, |\nabla v|) \, dx.
\end{align*}
So $u$ is a minimizer of a $\mathcal{K}^{u}_{u}(\Omega')$-obstacle problem.

Now suppose that $u$ is a minimizer of a $\mathcal{K}^{u}_{u}(\Omega')$-obstacle problem for every $\Omega' \Subset \Omega$. Let $v \in W^{1,\phi}(\Omega)$ be nonnegative such that $\{v >0 \} \Subset \Omega$ and let $\Omega'$ be an open set such that $\{v >0\} \subset \Omega' \Subset \Omega$.
Therefore $v$ is an admissible test function for local superminimizers. As $u$ is a minimizer of the $\mathcal{K}^{u}_{u}(\Omega')$-obstacle problem and $u+v \in \mathcal{K}^{u}_{u}(\Omega')$, we have
\begin{align*}
\int_{\Omega'} \phi(x, |\nabla u|) \, dx \leq \int_{\Omega'} \phi(x, |\nabla (u+v)|) \, dx
\end{align*}
and therefore $u$ is a local superminimizer in $\Omega$.
\end{proof}
 
\begin{rem}
\label{rem:super-obstacle}
From the previous proof we get also the following result: If a local superminimizer $u$ in $\Omega$ belongs to $W^{1,\phi}(\Omega)$, it is a minimizer of the $\mathcal{K}^{u}_{u}(\Omega)$-obstacle problem.
\end{rem}

Next we prove a comparison principle for the obstacle problem. Strong assumptions are needed to guarantee uniqueness of the minimizer. 
Note that comparison principle also implies uniqueness of the minimizer of the $\mathcal{K}^{f}_{\psi}(\Omega)$-obstacle problem.

\begin{prop}[Comparison principle]
\label{prop:comparison}
Let $\phi \in \Phi_c(\Omega)$ be strictly convex and satisfy (A0), (A1) and (aDec). Let $\psi_1, \psi_2 : \Omega \to [-\infty, \infty)$, $f_1, f_2 \in W^{1,\phi}(\Omega)$ and let $u_1$ and $u_2$ be solutions to the $\mathcal{K}^{f_1}_{\psi_1}(\Omega)$ and $\mathcal{K}^{f_2}_{\psi_2}(\Omega)$-obstacle problems, respectively. If $\psi_1 \leq \psi_2$ almost everywhere in $\Omega$ and $(f_1-f_2)_+ \in W^{1,\phi}_0(\Omega)$, then $u_1 \leq u_2$ almost everywhere in $\Omega$.
\end{prop}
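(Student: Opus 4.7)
The plan is to compare the lattice min and max, $v_1 := \min\{u_1, u_2\}$ and $v_2 := \max\{u_1, u_2\}$, with the given minimizers: $v_1$ should be admissible for the $\mathcal{K}^{f_1}_{\psi_1}(\Omega)$-obstacle problem, $v_2$ for the $\mathcal{K}^{f_2}_{\psi_2}(\Omega)$-obstacle problem, and the pointwise swap on $\{u_1 > u_2\}$ preserves the total $\phi$-energy. This forces $v_1$ to itself be a minimizer of the $\mathcal{K}^{f_1}_{\psi_1}(\Omega)$-obstacle problem, and strict convexity of $\phi$ will then give $u_1 = v_1$ a.e., which is exactly $u_1 \le u_2$. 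Uniqueness of the minimizer of a single obstacle problem drops out as a byproduct.

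For admissibility, $v_1 \ge \psi_1$ and $v_2 \ge \psi_2$ are immediate from $\psi_1 \le \psi_2$, while the boundary conditions reduce via $v_1 - f_1 = (u_1 - f_1) - (u_1 - u_2)_+$ and $v_2 - f_2 = (u_2 - f_2) + (u_1 - u_2)_+$ to showing $(u_1 - u_2)_+ \in W^{1,\phi}_0(\Omega)$. Writing $u_1 - u_2 = [(u_1 - f_1) - (u_2 - f_2)] + (f_1 - f_2)$ and using subadditivity of the positive part yields the pointwise sandwich
\[
0 \le (u_1 - u_2)_+ \le \bigl((u_1 - f_1) - (u_2 - f_2)\bigr)_+ + (f_1 - f_2)_+,
\]
whose right-hand side lies in $W^{1,\phi}_0(\Omega)$: the first summand because $g \in W^{1,\phi}_0$ implies $g_+ \in W^{1,\phi}_0$ via Lemma~\ref{lem:min-zero-valued} applied with $v \equiv 0$, and the second by hypothesis. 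Lemma~\ref{lem:u-between-compacts} then gives $(u_1 - u_2)_+ \in W^{1,\phi}_0(\Omega)$, so $v_1 \in \mathcal{K}^{f_1}_{\psi_1}(\Omega)$ and $v_2 \in \mathcal{K}^{f_2}_{\psi_2}(\Omega)$.

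The swap step is immediate: $(|\nabla v_1|, |\nabla v_2|)$ coincides pointwise a.e.\ with $(|\nabla u_1|, |\nabla u_2|)$ up to permutation, so
\[
\int_\Omega \phi(x, |\nabla v_1|) + \phi(x, |\nabla v_2|) \, dx = \int_\Omega \phi(x, |\nabla u_1|) + \phi(x, |\nabla u_2|) \, dx.
\]
Combined with the two minimality inequalities $\int_\Omega \phi(x, |\nabla u_i|) \, dx \le \int_\Omega \phi(x, |\nabla v_i|) \, dx$, $i = 1, 2$, both must be equalities, and $v_1$ is itself a minimizer of the $\mathcal{K}^{f_1}_{\psi_1}(\Omega)$-obstacle problem.

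Finally, testing with the midpoint $w := (u_1 + v_1)/2 \in \mathcal{K}^{f_1}_{\psi_1}(\Omega)$ and combining convexity of $|\cdot|$ with monotonicity and convexity of $\phi(x, \cdot)$ gives the pointwise chain
\[
\phi(x, |\nabla w|) \le \phi\bigl(x, \tfrac{|\nabla u_1| + |\nabla v_1|}{2}\bigr) \le \tfrac12 \phi(x, |\nabla u_1|) + \tfrac12 \phi(x, |\nabla v_1|),
\]
and minimality of $w$ forces a.e.\ equality throughout. The main obstacle is extracting $\nabla u_1 = \nabla v_1$ from this chain: strict convexity of $\phi(x, \cdot)$ gives $|\nabla u_1| = |\nabla v_1|$ a.e., and strict monotonicity of $\phi(x, \cdot)$ (a consequence of strict convexity together with $\phi(x, 0) = 0$) promotes the first inequality to $|\nabla w| = (|\nabla u_1| + |\nabla v_1|)/2$ a.e., which via the equality case of the Euclidean triangle inequality forces $\nabla u_1$ and $\nabla v_1$ to be positively proportional and, together with equal magnitudes, equal. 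Hence $u_1 - v_1 \in W^{1,\phi}_0(\Omega)$ has zero weak gradient, and since $W^{1,\phi}_0(\Omega) \subset W^{1,1}_0(\Omega)$ (cf.\ \cite[Lemma~4.4]{HarHK16}) and $\Omega$ is a bounded domain, the resulting constant must vanish, yielding $u_1 = v_1 \le u_2$ a.e.
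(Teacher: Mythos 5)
Your proof is correct and follows the same overall scheme as the paper's: form the lattice minimum and maximum, verify admissibility, exchange energies on $\{u_1>u_2\}$ to conclude that $\min\{u_1,u_2\}$ is again a minimizer of the $\mathcal{K}^{f_1}_{\psi_1}$-problem, and invoke strict convexity. There are two places where your execution genuinely differs. First, for the boundary admissibility you reduce everything to showing $(u_1-u_2)_+\in W^{1,\phi}_0(\Omega)$ and then sandwich it between $0$ and $\bigl((u_1-f_1)-(u_2-f_2)\bigr)_+ + (f_1-f_2)_+$ using subadditivity of the positive part; the paper instead manipulates the auxiliary function $h=u_1-f_1-(u_2-f_2)$ and shows $\min\{f_2-f_1,h\}$ and $\max\{f_1-f_2,\tilde h\}$ lie in $W^{1,\phi}_0(\Omega)$ by a two-sided pinch. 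Your version is shorter and arguably more transparent, and both rest on Lemmas~\ref{lem:min-zero-valued} and~\ref{lem:u-between-compacts}. Second, for the final step the paper simply cites an external uniqueness theorem for strictly convex $\phi$ ([Theorem~7.5] of HarHK16), whereas you prove uniqueness on the spot via the midpoint $w=(u_1+v_1)/2$, extracting $|\nabla u_1|=|\nabla v_1|$ from strict convexity, then $|\nabla w|=\tfrac12(|\nabla u_1|+|\nabla v_1|)$ from strict monotonicity (which indeed follows from strict convexity and $\phi(x,0)=0$), then equality in the Euclidean triangle inequality to get $\nabla u_1=\nabla v_1$ a.e., and finally the Poincar\'e-type conclusion that the constant $u_1-v_1\in W^{1,\phi}_0(\Omega)\subset W^{1,1}_0(\Omega)$ vanishes. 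This is self-contained at the cost of some length; the paper's citation is shorter but less illuminating. Either route is sound.
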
 

\begin{proof}
Let $u:=\min \{u_1, u_2\}$ and $h := u_1 -f_1 - (u_2 -f_2) \in W^{1,\phi}_0(\Omega)$. Note that $h_- \in W^{1,\phi}_0(\Omega)$ by  Lemma \ref{lem:min-zero-valued} since both $h$ and the constant function $0$ belong to $W^{1,\phi}_0(\Omega)$ and $\phi$ satisfies (A0), (A1) and (aDec) . Now
\begin{align*}
h \geq \min \{f_2-f_1,h\} &= -\max \{f_1-f_2,-h\} \\
&\geq -\big( \max\{f_1-f_2,0\} + \max\{-h,0\} \big) \\
&= -(f_1-f_2)_+  -h_-.
\end{align*}
Therefore, as $-(f_1-f_2)_+- h_-$ and $h$ belong to $W^{1,\phi}_0(\Omega)$ and $\phi$ satisfies the assumptions in Lemma \ref{lem:u-between-compacts}, we get that $\min\{f_2-f_1, h\} \in W^{1,\phi}_0(\Omega)$. This in turn implies that
\begin{align*}
u-f_1 = \min\{u_2-f_1, u_1-f_1\} = u_2-f_2 + \min \{f_2-f_1,h\} \in W^{1,\phi}_0(\Omega).
\end{align*}
Because $u \geq \psi_1$ almost everywhere in $\Omega$, we see that $u \in \mathcal{K}^{f_1}_{\psi_1}(\Omega)$.

Now let $v:=\max\{u_1,u_2\}$ and $\tilde h :=u_2-f_2 -(u_1-f_1) \in W^{1,\phi}_0(\Omega)$. As before, we get
\begin{align*}
\tilde h \leq \max \{f_1-f_2,\tilde h\} \leq \max\{f_1-f_2,0\} + \max\{\tilde h,0\} = (f_1-f_2)_+ + \tilde h_+.
\end{align*}
By assumptions and Lemma \ref{lem:min-zero-valued}, the functions $(f_1-f_2)_+$, $\tilde h$ and $\tilde h_+ = -\min\{- \tilde h,0\}$ belong to $W^{1,\phi}_0(\Omega)$, so from Lemma \ref{lem:u-between-compacts} we deduce that $\max\{f_1-f_2, \tilde h\} \in W^{1,\phi}_0(\Omega)$. Again,
\begin{align*}
v-f_2 = \max\{u_1-f_2, u_2-f_2\} = u_1-f_1 + \max \{f_1-f_2, \tilde h\} \in W^{1,\phi}_0(\Omega).
\end{align*}
Finally, since $v \geq \psi_2$ almost everywhere in $\Omega$, we see that $v \in \mathcal{K}^{f_2}_{\psi_2}(\Omega)$.

Let $A:=\{u_1 > u_2\}$. Since $u_2$ is a minimizer of the $\mathcal{K}^{f_2}_{\psi_2}(\Omega)$-obstacle problem, we find
\begin{align*}
\int_{\Omega} \phi(x, |\nabla u_2|) \, dx &\leq \int_{\Omega} \phi(x, |\nabla v|)\, dx\\
&= \int_{A} \phi(x, |\nabla u_1|) \, dx + \int_{\Omega \setminus A} \phi(x, |\nabla u_2|) \, dx.
\end{align*}
Now it follows that
\begin{align*}
\int_{A} \phi(x, |\nabla u_2|) \, dx \leq \int_{A} \phi(x, |\nabla u_1|) \, dx
\end{align*}
and therefore
\begin{align*}
\int_{\Omega}\phi(x, |\nabla u|) \,dx &= \int_{A} \phi(x, |\nabla u_2|) \, dx + \int_{\Omega \setminus A} \phi(x, |\nabla u_1|) \, dx \\
&\leq \int_{\Omega} \phi(x, |\nabla u_1|) \, dx.
\end{align*}
Now since $u_1$ is a minimizer of the $\mathcal{K}^{f_1}_{\psi_1}$-obstacle problem, so is $u$. But because $\phi$ is strictly convex and satisfies (A0), the minimizer of the obstacle problem has to be unique \cite[Theorem 7.5]{HarHK16}. Therefore $u_1=u=\min\{u_1, u_2\}$ almost everywhere in $\Omega$ and thus $u_1 \leq u_2$ almost everywhere in $\Omega$.
\end{proof}

The following result is not needed in the rest of the paper, but as it follows quickly from the Comparison principle, we present it for the interested reader.
\begin{prop}
\label{prop:obs-smalleset-super}
Let $\phi \in \Phi_c(\Omega)$ satisfy (A0), (A1), (aDec) and be strictly convex. Let $u$ be a minimizer of the $\mathcal{K}^{f}_{\psi}(\Omega)$-obstacle problem and $v \in \mathcal{K}^{f}_{\psi}(\Omega)$ be a local superminimizer. Then $u \leq v$ almost everywhere in $\Omega$.
\end{prop}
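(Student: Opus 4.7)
The plan is to reduce the statement to the Comparison Principle (Proposition \ref{prop:comparison}) by recognizing $v$ itself as a solution of an obstacle problem whose obstacle and boundary values are both $v$. Remark \ref{rem:super-obstacle} furnishes exactly this: a local superminimizer that lies in $W^{1,\phi}(\Omega)$ solves the $\mathcal{K}^{v}_{v}(\Omega)$-obstacle problem. Since $v \in \mathcal{K}^{f}_{\psi}(\Omega) \subset W^{1,\phi}(\Omega)$, the hypothesis of the remark is automatic.

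With this in hand, I would set up the Comparison Principle with $u_1 = u$ solving the $\mathcal{K}^{f_1}_{\psi_1}(\Omega)$-obstacle problem for $\psi_1 = \psi$ and $f_1 = f$, and $u_2 = v$ solving the $\mathcal{K}^{f_2}_{\psi_2}(\Omega)$-obstacle problem for $\psi_2 = v$ and $f_2 = v$. The two hypotheses to verify are then:
\begin{itemize}
\item $\psi_1 \leq \psi_2$ a.e.\ in $\Omega$. This is immediate from $v \in \mathcal{K}^{f}_{\psi}(\Omega)$, which forces $v \geq \psi$ a.e.
\item $(f_1 - f_2)_+ = (f-v)_+ \in W^{1,\phi}_0(\Omega)$. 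Because $v \in \mathcal{K}^{f}_{\psi}(\Omega)$, we have $v - f \in W^{1,\phi}_0(\Omega)$. Writing $(f-v)_+ = -\min\{v-f,\, 0\}$ and applying Lemma \ref{lem:min-zero-valued} with the non-negative function $0 \in W^{1,\phi}(\Omega)$ and $v-f \in W^{1,\phi}_0(\Omega)$ (here (A0), (A1) and (aDec) are used), we conclude $(f-v)_+ \in W^{1,\phi}_0(\Omega)$.
\end{itemize}

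Once both hypotheses are checked, Proposition \ref{prop:comparison} immediately yields $u \leq v$ almost everywhere in $\Omega$, which is the claim. There is no real obstacle in the argument; the only subtlety is noticing that Remark \ref{rem:super-obstacle} lets one convert the local superminimality of $v$ into membership in a comparison class to which Proposition \ref{prop:comparison} applies, after which the structural assumptions on $\phi$ (strict convexity, (A0), (A1), (aDec)) are already exactly those demanded by the comparison principle.
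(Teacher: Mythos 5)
Your proposal is correct and follows essentially the same route as the paper's own proof: invoke the fact that a global local superminimizer in $W^{1,\phi}(\Omega)$ solves the $\mathcal{K}^{v}_{v}(\Omega)$-obstacle problem and then apply the comparison principle. You spell out the verification of the two hypotheses of Proposition \ref{prop:comparison} more explicitly than the paper does, and you cite Remark \ref{rem:super-obstacle} (the global version) where the paper cites Proposition \ref{prop:obs-is-super} (stated for $\Omega' \Subset \Omega$) — your citation is actually the more precise one, but the argument is the same.
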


\begin{proof}
Since $v \in \mathcal{K}^{f}_{\psi}(\Omega)$, we have that $v \in W^{1,\phi}(\Omega)$ and by Proposition \ref{prop:obs-is-super} $v$ is the minimizer of the $\mathcal{K}^{v}_{v}(\Omega)$-obstacle problem. Since $u$ and $v$ have the same boundary values in Sobolev sense and $v \geq \psi$ we have from the comparison principle (Proposition \ref{prop:comparison}) that $u \leq v$ almost everywhere in $\Omega$.
\end{proof}

\section{Continuity in the interior}

Later in Section \ref{sec:boundary-continuity} we prove boundary continuity results relating to the solution of the obstacle problem. The proofs rely heavily to similar results inside a domain and the main strategy is to prove irrelevance of the obstacle in most of the points in $\Omega$. At first in this section we collect the relevant results from \cite{HarH_pp16} and formulate them for the obstacle problem and for balls instead of cubes. The original reason for cubes has been to employ Krylov--Safanov covering theorem.

The first lemma corresponds to \cite[Lemma 3.2]{HarH_pp16}, where instead of minimizer of the $\mathcal{K}^{f}_{\psi}(\Omega)$-obstacle problem there is a local quasisubminimizer. All we need to note is that in the proof instead of $-w\eta$ being negative, we have that $v\geq \psi$ if $k \geq \psi$. We also define $A(k,r):= B \cap \{u > k\}$ for any $B \Subset \Omega$ with radius $r$. If $\psi(y) = \infty$ for some $y \in B(x,R)$, we have $A(k,R)=\emptyset$ and the estimate is trivial. 
\begin{lem}[Caccioppoli inequality] 
\label{lem:basic-Caccioppoli}
Let $\phi \in \Phi_w(\Omega)$ satisfy (aDec). Let $u$ be a minimizer of the $\mathcal{K}^{f}_{\psi}(\Omega)$-obstacle problem. Then for all $k \geq \sup_{B(x,R)} \psi$ in $B(x,R)$ we have
\begin{align}
\label{eq:caccioppoli-k}
\int_{A(k,r)} \phi(x,|\nabla (u-k)_+ |) \, dx \leq C \int_{A(k,R)} \phi \left (x, \dfrac{u-k}{R-r} \right ) \, dx
\end{align}
where the constant $C$ depends only on the (aDec) constants of $\phi$.
\end{lem}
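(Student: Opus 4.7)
The plan is to perform the standard Caccioppoli argument with a hole-filling step, adapted to the obstacle constraint through the hypothesis $k \geq \sup_{B(x,R)} \psi$. Fix radii $r \leq s < t \leq R$ and let $\eta \in C^\infty_c(B(x,t))$ be a cutoff with $\eta \equiv 1$ on $B(x,s)$, $0 \leq \eta \leq 1$ and $|\nabla \eta| \leq c/(t-s)$. I would set
\[
v := u - \eta (u-k)_+.
\]
The first step is to verify that $v \in \mathcal{K}^{f}_{\psi}(\Omega)$: since $\eta (u-k)_+$ has compact support in $\Omega$ and lies in $W^{1,\phi}(\Omega)$ (here (aDec) gives the product rule in $W^{1,\phi}$), we have $v - f \in W^{1,\phi}_0(\Omega)$. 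Where $u \leq k$ we have $v = u \geq \psi$; where $u > k$ we have $v = (1-\eta) u + \eta k$, and both $u \geq \psi$ and $k \geq \psi$ (by the hypothesis on $k$) hold a.e.\ in $B(x,R)$, so $v \geq \psi$ by convexity of the constraint.

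The second step is to apply the minimality inequality $\int_\Omega \phi(x,|\nabla u|)\,dx \leq \int_\Omega \phi(x,|\nabla v|)\,dx$ and cancel the integrand on the set $\{(u-k)_+ = 0\} \cup (\Omega \setminus B(x,t))$ where $\nabla v = \nabla u$. Restricting everything to $A(k,t)$ and computing
\[
\nabla v = (1-\eta)\nabla u - (u-k)\nabla \eta \quad \text{on } A(k,t),
\]
I would apply \eqref{eq:phi-triangle} (which holds under (aDec)) to obtain
\[
\int_{A(k,t)} \phi(x,|\nabla u|)\,dx \leq L\!\!\int_{A(k,t)} \!\phi(x,(1-\eta)|\nabla u|)\,dx + L\!\!\int_{A(k,t)} \!\phi\!\left(x,\tfrac{c(u-k)}{t-s}\right) dx.
\]
Since $\eta \equiv 1$ on $B(x,s)$, the first integral on the right is supported on $A(k,t)\setminus A(k,s)$, where $(1-\eta)|\nabla u| \leq |\nabla u|$. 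Adding the hole $L\int_{A(k,s)}\phi(x,|\nabla u|)\,dx$ to both sides and dividing by $L$ yields the hole-filled bound
\[
\int_{A(k,s)} \phi(x,|\nabla u|)\,dx \leq \tfrac{L-1}{L} \int_{A(k,t)} \phi(x,|\nabla u|)\,dx + \int_{A(k,t)} \phi\!\left(x,\tfrac{c(u-k)}{t-s}\right) dx.
\]

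The third step is to eliminate the self-referential term via the iteration Lemma \ref{lem:iteration}, with $Z(s) := \int_{A(k,s)} \phi(x,|\nabla u|)\,dx$, $\theta := (L-1)/L \in [0,1)$, and $X(\tau) := \int_{A(k,R)} \phi(x,c(u-k)\tau)\,dx$, which is increasing and doubling in $\tau$ by (aDec). The lemma yields $Z(r) \lesssim X(1/(R-r))$; one final application of (aDec) to absorb the constant $c$ inside $\phi$ gives the desired estimate, while on $A(k,r)$ the identity $\nabla(u-k)_+ = \nabla u$ converts the left-hand side into the form stated. The main subtlety is the admissibility of $v$: without the assumption $k \geq \sup_{B(x,R)}\psi$ we could not guarantee $v \geq \psi$, and this is exactly where the obstacle problem differs from the unconstrained minimizer case.
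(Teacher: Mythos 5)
Your proof is correct and follows essentially the same route the paper takes: the paper cites \cite[Lemma~3.2]{HarH_pp16} and only remarks that the test function remains admissible because $k \geq \sup_{B(x,R)}\psi$ forces $v \geq \psi$, which is precisely the verification you carry out explicitly with $v = u - \eta(u-k)_+$, followed by the standard hole-filling and iteration via Lemma~\ref{lem:iteration}. The only cosmetic deviation is the exact value of $\theta$ in the hole-filling step, but any $\theta \in [0,1)$ suffices for the iteration lemma, so this is immaterial.
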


Now since $u$ satisfies the previous Caccioppoli inequality, we have the following boundedness result \cite[Proposition 3.3]{HarH_pp16}.

\begin{prop}
\label{prop:u-bdd}
Let $\phi \in \Phi_w(\Omega)$ satisfy (A0), (A1), (aInc)$_p$ and (aDec)$_q$. Suppose that $u \in W^{1,\phi}(\Omega)$ satisfies the Caccioppoli inequality \eqref{eq:caccioppoli-k}. Then there exists $R_0 \in (0, 1)$ such that
\begin{align*}
\esssup_{\frac{1}{2}B} u \leq k_0 + 1 + cR^{-\tfrac{q}{\alpha p}} \left (\int_{2B} \phi(x, (u-k_0)_+ ) \, dx \right )^{\tfrac{1}{p}},
\end{align*}
for every $k_0 \geq \sup_{2B} \psi$ in $2B$, where $B:= B(y,R)$, when $R \in (0, R_0]$ such that $B(y,6R_0) \subset \Omega$. Here $R_0$ is such that $R_0 \leq c(n)$  and $\varrho_{L^{\phi}(B_{6R_0})}(\nabla u) \leq 1$, $\alpha$ is a constant that depends on $n, p$ and $q$, and the constant $c$ depends only on the parameters in assumptions and the dimension n.
\end{prop}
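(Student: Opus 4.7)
The plan is to run a standard De Giorgi truncation-iteration argument tailored to the generalized Orlicz setting, exactly as in \cite[Proposition 3.3]{HarH_pp16} from which the statement is lifted. Fix $d>0$ (to be chosen later) and $B = B(y,R)$ with $R \in (0,R_0]$, and define shrinking radii $R_j := \frac{R}{2}(1 + 2^{-j})$, balls $B_j := B(y, R_j)$, and increasing levels $k_j := k_0 + d(1 - 2^{-j})$. Set $A_j := A(k_j, R_j) = B_j \cap \{u > k_j\}$ and let $Y_j := \int_{A_j} \phi(x, (u-k_j)_+)\,dx$. The goal is a recursive inequality $Y_{j+1} \le C b^j Y_j^{1+\sigma}$ with some $\sigma > 0$; a standard iteration lemma then forces $Y_j \to 0$, provided $Y_0$ is small enough, which translates into $u \le k_0 + d$ on $\tfrac12 B$.

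The recursion is produced in three moves. First, apply the Caccioppoli inequality from Lemma 5.1 at level $k_{j+1} \ge \sup_{2B}\psi$ on the pair $B_{j+1} \subset B_j$ to bound $\int_{A_{j+1}} \phi(x, |\nabla (u-k_{j+1})_+|)\,dx$ by $C(2^j/R)$-powered modular of $(u-k_{j+1})_+$ on $A_j$, controlling the $(R_j - R_{j+1})^{-1}$ factor through (aDec). Second, apply the Sobolev--Poincar\'e inequality of Proposition 4.7 with some $s \in (1, n/(n-1))$ to the function $(u-k_{j+1})_+$, which is admissible because the condition $R \le R_0$ forces $\varrho_{\phi^{1/s}}(\nabla u) \le 1$ and hence the unit-ball property \eqref{eq:unit-ball-property}; this produces a super-linear (in fact $s$-th power) bound on $\int_{B_j}\phi(x,(u-k_{j+1})_+/R_j)\,dx$ in terms of the gradient modular raised to the power $s$. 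Third, on the set $A_{j+1}$ we have the pointwise lower bound $(u-k_j)_+ \ge k_{j+1} - k_j = d\, 2^{-j-1}$, which, combined with (aInc)$_p$ and Chebyshev applied to $\phi(x,\cdot)$, converts $|A_{j+1}|$ into $Y_j$ via $|A_{j+1}| \le C(d\,2^{-j-1})^{-p}Y_j / \phi^-$-type factors, giving the desired super-linear recursion for $Y_j$.

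The iteration $Y_{j+1} \le C b^j Y_j^{1+\sigma}$ closes provided $Y_0$ is below a threshold of the form $c\, d^{\gamma} R^{\beta}$ for explicit $\gamma, \beta$ coming from (aInc)$_p$, (aDec)$_q$, and the Sobolev exponent $s$; solving for the smallest admissible $d$ yields
\[
d \le 1 + cR^{-q/(\alpha p)}\left(\int_{2B}\phi(x,(u-k_0)_+)\,dx\right)^{1/p},
\]
with $\alpha$ determined by the Sobolev gain (essentially $\alpha = 1 - s(n-1)/n$ after adjusting for the mismatch between (aInc)$_p$ and (aDec)$_q$), and the extra $+1$ absorbing the $+1$ in the Jensen-type and Sobolev--Poincar\'e inequalities. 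Since $u \le k_0 + d$ on $\tfrac12 B$, the claimed sup estimate follows.

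The principal obstacle is the bookkeeping of scaling in generalized Orlicz spaces: unlike the $L^p$ setting, $\phi$ does not scale homogeneously, so every time one passes a constant across $\phi$ one must invoke (aInc)$_p$ or (aDec)$_q$ (or the doubling consequence \eqref{eq:phi-triangle}), and these asymmetric powers ultimately generate the factor $R^{-q/(\alpha p)}$. The smallness assumption $R \le R_0$ with $\varrho_{L^\phi(B_{6R_0})}(\nabla u) \le 1$ is exactly what is needed to keep the modular $\le 1$ so that Jensen (Lemma 4.6) and Sobolev--Poincar\'e (Proposition 4.7) can be applied without losing the additive $+1$ control.
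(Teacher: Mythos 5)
The paper does not prove this proposition: it is cited wholesale from \cite[Proposition 3.3]{HarH_pp16}, and the only work the paper adds is verifying that the Caccioppoli inequality of Lemma~\ref{lem:basic-Caccioppoli} holds for the obstacle minimizer, so the cited result applies unchanged. Your proposal is therefore not an alternative to the paper's argument but a reconstruction of the underlying proof from that reference, and the overall De Giorgi scaffold you describe (shrinking radii, rising levels, Caccioppoli followed by Sobolev--Poincar\'e and Chebyshev, smallness iteration) is the right one.

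Two steps do not close as written, though. First, the Sobolev--Poincar\'e inequality of Proposition~\ref{pro:Poincare_Sebastian_s} does not by itself produce super-linearity: it bounds $\fint_B\phi$ by $\big(\fint_B\phi(\cdot,|\nabla v|)^{1/s}\big)^s+1$, and $\big(\fint f^{1/s}\big)^s\le\fint f$ by Jensen, so the $s$-th power is a loss, not a gain. The super-linear factor enters only after applying H\"older to the $1/s$-powered modular restricted to the level set $A_j$, which yields $(|A_j|/|B_j|)^{s-1}$, and that factor --- together with your Chebyshev conversion of $|A_j|$ into a power of $Y_j$ --- is the actual source of $Y_j^{1+\sigma}$; you merge the two steps and attribute the gain to the wrong place. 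Second, Proposition~\ref{pro:Poincare_Sebastian_s} is stated for $v-v_B$, not $v$; applying it directly to $(u-k_{j+1})_+$ leaves the mean unaccounted for. The standard fix is to multiply by a cut-off so that $(u-k_{j+1})_+\eta\in W^{1,\phi}_0(B_j)$ and invoke a Sobolev inequality for zero-boundary functions, or to absorb the mean using the fact that $(u-k_{j+1})_+$ vanishes on a set whose measure is already under control. Both are routine repairs, but as written there is a gap at each of these points.
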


By assuming (A1-$n$) and boundedness of the minimizer instead of assuming (A1) we have the following result \cite[Corollary 3.6]{HarH_pp16}.
\begin{prop}
\label{prop:u-bdd-better}
Let $\phi \in \Phi_w(\Omega)$ satisfy (A0), (A1-n) and (aDec) and suppose that $u$ is locally bounded and satisfies the Caccioppoli inequality \eqref{eq:caccioppoli-k}. Then
\begin{align*}
\esssup_{\frac{1}{2}B} u - k \leq C \left [\left (\int_{2B} (u-k)_+^{q} \, dx \right )^{\tfrac1q} + R\right ]
\end{align*}
when $B:=B(y,R)$ with $R \in (0, R_0]$ such that $B(y,6R_0)\subset \Omega$ and $k\geq \psi(x)$ almost everywhere in $2B$ and $q \in (0,\infty)$. The constant $C$ depends only on the parameters in assumptions (A0), (A1-n) and (aDec), $n$, $R_0$, $\|u\|_{L^{\infty}(B)}$ and $q$. Especially the constant is independent of $R$.
\end{prop}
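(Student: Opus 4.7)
\emph{Proof strategy.} The statement is a Stampacchia-type $L^q$--$L^\infty$ estimate and the proof follows the De Giorgi iteration of \cite[Corollary~3.6]{HarH_pp16}. The only substantive change from that argument is that Lemma~\ref{lem:basic-Caccioppoli} supplies the Caccioppoli inequality required for minimizers of the $\mathcal{K}^{f}_{\psi}(\Omega)$-obstacle problem at any level $k$ lying above the obstacle, which is precisely what is assumed.

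Set $M := \|u\|_{L^\infty(B)}$ and pick $d>0$ to be determined. For $j \ge 0$ define
\[
k_j := k + d\bigl(1-2^{-j}\bigr), \qquad r_j := \tfrac{R}{2} + \tfrac{3R}{2}\cdot 2^{-j}, \qquad B_j := B(y,r_j),
\]
so that $B_0 = 2B$ and $B_j \downarrow \tfrac12 B$, and put $u_j := (u-k_j)_+$. Since $k_j \ge k \ge \psi$ a.e.\ on $B_j \subset 2B$, Lemma~\ref{lem:basic-Caccioppoli} applied between $r_{j+1}<r_j$ gives
\[
\int_{B_{j+1}} \phi\bigl(x,|\nabla u_{j+1}|\bigr)\,dx \le C\int_{B_j} \phi\!\left(x, \tfrac{C\,2^{j}}{R}\, u_{j+1}\right) dx.
\]

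Combine this with the Sobolev--Poincar\'e inequality (Proposition~\ref{pro:Poincare_Sebastian_s}) on $B_{j+1}$ to obtain a gain of integrability for $u_{j+1}$; condition (A1-$n$) is invoked to swap $\phi^{-}_{B_j}$ for $\phi^{+}_{B_j}$ at arguments of size $\lesssim 1/\diam(B_j)$. Since $u_j \le M$ by assumption, the conditions (A0), (aInc) and (aDec) let one compare $\phi(x,t)$ with a power of $t$ on the range of arguments that appear, and all $R$-dependent factors collapse to an $R$-independent constant (with a single additive $R$-term coming from the $+1$ in Proposition~\ref{pro:Poincare_Sebastian_s}). The upshot is a polynomial recursion of the form $Y_{j+1} \le C\,b^{j}\,Y_j^{1+\sigma}$ with some $b>1$ and $\sigma>0$, where $Y_j := R^{-n}\int_{B_j} u_j^{q_0}\,dx$ and $q_0$ is a fixed Sobolev exponent. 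A standard iteration lemma then forces $Y_j\to 0$ as $j \to \infty$ provided
\[
d \ge C_0 \Bigl(\int_{2B} (u-k)_+^{q_0}\,dx\Bigr)^{\!1/q_0} + C_0 R,
\]
which gives $\esssup_{\frac12 B}(u-k) \le d$; this is the claim for the exponent $q=q_0$. For arbitrary $q\in(0,\infty)$ the exponent is adjusted via the elementary estimate $\int(u-k)_+^{q_0} \le (\esssup(u-k)_+)^{q_0-q}\int(u-k)_+^q$ together with Young's inequality and an absorption step based on Lemma~\ref{lem:iteration}.

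The main obstacle I foresee is the careful accounting needed so that the final constant depends only on $n$, the $\phi$-parameters, $R_0$, $M$ and $q$, and the $R$-dependence on the right-hand side collapses exactly to a single additive $R$. This hinges on the interplay of (A1-$n$) with the $L^\infty$ bound on $u$ and (aDec), which lets us treat $\phi$ like a power function on the scales appearing in the iteration.
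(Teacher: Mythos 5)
The paper does not actually prove this proposition: it is quoted directly as \cite[Corollary~3.6]{HarH_pp16}, so there is no proof in this paper to compare against. Your De Giorgi iteration sketch is the natural route and plausibly mirrors the cited argument: level-set truncations with shrinking radii, the obstacle-compatible Caccioppoli inequality of Lemma~\ref{lem:basic-Caccioppoli} at levels above $\sup_{2B}\psi$, the Sobolev--Poincar\'e gain, the fast geometric convergence recursion, and finally the interpolation in $q$ via Lemma~\ref{lem:iteration}.

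What remains a genuine gap is the step you yourself flag: converting $\phi$-integrals to pure power integrals with a constant depending on $\|u\|_{L^\infty(B)}$ and $R_0$ but not on $R$, and with all remaining $R$-dependence collapsing to a single additive $R$. This is not merely careful accounting. The arguments entering $\phi$ in the Caccioppoli inequality are of size up to $\sim 2^j M/R$, which is not \emph{a priori} inside the (A1-$n$) window $\left[1,\tfrac{1}{\diam(B_j)}\right]$, and a direct (aDec) comparison $\phi(x,t)\lesssim t^{q}\phi(x,1)$ produces $R^{-q}$ on the right-hand side whose cancellation in the iteration is precisely what needs to be demonstrated. One would either normalise $u$ by $R$ at the outset, or pass through the scale-sensitive estimate of Proposition~\ref{prop:u-bdd} and show explicitly how (A1-$n$) together with the $L^\infty$ bound absorbs the factor $R^{-q/(\alpha p)}$ appearing there. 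A smaller point: (aInc) with exponent $>1$ is not among the hypotheses of this proposition; only (aInc)$_1$, which is built into the definition of $\Phi_w$, is available, so the power comparisons must be organised around that.
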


Next we use the fact that $u$ is also a local superminimizer (Proposition \ref{prop:obs-is-super}) to get an infimum estimate from below \cite[Theorem 4.3]{HarH_pp16}. Since we are aiming for the weak Harnack inequality we need to assume also nonnegativity of the minimizer $u$.

\begin{prop}[The weak Harnack inequality]
\label{prop:weak-harnack}
Let $\phi \in \Phi_w(\Omega)$ satisfy (A0), (A1-n), (aInc) and (aDec). Let $u \in W^{1, \phix}_{\loc}(\Omega)$ be locally bounded nonnegative local (quasi)superminimizer or a locally bounded minimizer of an obstacle problem in $\Omega$. Then there exists an exponent $h>0$ such that
\begin{align*}
\left ( \fint_{B(y,R)} u^{h} \, dx \right )^{1/h} \leq C  \left [\essinf_{B(y, R/2)} u + R\right ]
\end{align*}
for every $R \leq c(n)$ with $B(y,6R) \Subset \Omega$ and $\int_{B(y,6R)} \phi(x, |\nabla u|) \, dx \leq 1$. The constant $C$ depends only on the parameters in the assumptions and $n$.
\end{prop}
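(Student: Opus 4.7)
The plan is to reduce the obstacle-problem case to the superminimizer case and then carry out the Moser iteration scheme of \cite[Theorem 4.3]{HarH_pp16}. By Proposition \ref{prop:obs-is-super} together with \cite[Proposition 7.16]{BjoB11}, any minimizer of the $\mathcal{K}^{f}_{\psi}(\Omega)$-obstacle problem is automatically a local superminimizer in $\Omega$: if $v \geq 0$ is an admissible test function for the superminimizer inequality, then $u+v \geq u \geq \psi$ is an admissible test function for the obstacle problem. Hence it suffices to prove the weak Harnack inequality for a locally bounded, nonnegative local superminimizer $u$.

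The first step is a Caccioppoli-type estimate for the negative part. Testing the superminimizer inequality with $\eta^{q}(k-u)_{+}$ for $k > 0$ and a standard cutoff $\eta$, and using (aDec) in the form \eqref{eq:phi-triangle}, one obtains
\[
\int_{B \cap \{u < k\}} \phi(x, |\nabla (k-u)_{+}|)\,\eta^{q}\,dx \leq C \int_{B} \phi\bigl(x, (k-u)_{+} |\nabla \eta|\bigr)\,dx.
\]
Combining this with the Sobolev--Poincar\'e inequality (Proposition \ref{pro:Poincare_Sebastian_s}), Lemma \ref{lem:jensen} and the iteration Lemma \ref{lem:iteration}, and running the Moser scheme on negative powers of $u+R$, one arrives at
\[
\essinf_{B(y,R/2)}(u+R) \geq C \left(\fint_{B(y,R)}(u+R)^{-h}\,dx\right)^{-1/h}
\]
for some small $h > 0$. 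The additive $+R$ appears because Lemma \ref{lem:jensen} and Proposition \ref{pro:Poincare_Sebastian_s} each carry an inhomogeneous $+1$ inside the modular, which rescales to $+R$ when unscaled back to $B(y,R)$.

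The remaining step is the crossover from negative to positive exponents via a BMO argument. Testing the superminimizer inequality with $\eta^{q}(u+R)^{-1}$ yields a logarithmic Caccioppoli estimate; combined with (A1-$n$) and the normalization $\int_{B(y,6R)}\phi(x,|\nabla u|)\,dx \leq 1$, which keeps us inside the unit-ball regime in which Lemma \ref{lem:jensen} applies without a multiplicative norm correction, this shows that $\log(u+R) \in \mathrm{BMO}(B(y,R))$ with a constant depending only on the parameters. A John--Nirenberg inequality then produces an exponent $h_{0} > 0$ and a constant $C$ such that
\[
\left(\fint_{B(y,R)}(u+R)^{h_{0}}\,dx\right)^{1/h_{0}} \left(\fint_{B(y,R)}(u+R)^{-h_{0}}\,dx\right)^{1/h_{0}} \leq C,
\]
which, combined with the preceding display, gives the claimed inequality with $h := h_{0}$ (and a harmless $R$ on the right-hand side arising from the shift $u \mapsto u+R$).

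The principal obstacle is the failure of multiplicative scaling invariance: unlike in the power case $\phi(t) = t^{p}$, one cannot simply replace $u$ by $\lambda u$ without altering the functional, so each iteration step must be checked for additive corrections of size $+R$ (or $+1$). Tracking these additive remainders carefully, while making essential use of (A1-$n$) and of the hypothesis $\int_{B(y,6R)}\phi(x,|\nabla u|)\,dx \leq 1$ to stay inside the regime where the generalized Orlicz Jensen and Sobolev--Poincar\'e estimates are available, is precisely what yields the statement in the form above.
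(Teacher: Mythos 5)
Your proposal is correct and takes essentially the same approach as the paper: reduce the obstacle case to the (local) superminimizer case via Proposition~\ref{prop:obs-is-super} and \cite[Proposition 7.16]{BjoB11}, then invoke the weak Harnack inequality for superminimizers. The paper in fact stops there and simply cites \cite[Theorem 4.3]{HarH_pp16} without reproducing the Moser iteration, logarithmic Caccioppoli and John--Nirenberg crossover; your sketch of those steps (including the bookkeeping of the inhomogeneous $+R$ terms and the role of the normalization $\int_{B(y,6R)}\phi(x,|\nabla u|)\,dx\leq 1$) is consistent with the proof given in that reference.
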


The final result we borrow from non-obstacle case is \cite[Theorem 4.4]{HarH_pp16}. It follows directly to our case since a minimizer of the $\mathcal{K}^{f}_{\psi}(\Omega)$-obstacle problem is also a local superminimizer (Proposition \ref{prop:obs-is-super}).

\begin{prop}
\label{prop:obs-lsc}
Let $\phi \in \Phi_w(\Omega)$ satisfy (A0), (A1-n), (aInc) and (aDec). Let $u$ be a locally bounded minimizer of the $\mathcal{K}^{f}_{\psi}(\Omega)$-obstacle problem which is bounded from below and set
\begin{align*}
u^{\ast}(x) := \essliminf_{y \to x} u(y) := \lim_{r\to 0} \essinf_{B(x,r)} u.
\end{align*}
Then $u^{\ast}$ is lower semicontinuous and $u = u^{\ast}$ almost everywhere. 
\end{prop}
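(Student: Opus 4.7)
The plan splits into two separate assertions: lower semicontinuity of $u^*$, and the equality $u = u^*$ almost everywhere.

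For lower semicontinuity, note that $r \mapsto \essinf_{B(x, r)} u$ is nonincreasing in $r$, so the limit $u^*(x) = \lim_{r \to 0} \essinf_{B(x, r)} u$ is well-defined. Given $x_0 \in \Omega$ and $\ve > 0$, choose $r > 0$ with $\essinf_{B(x_0, r)} u > u^*(x_0) - \ve$; then for every $y \in B(x_0, r/2)$ we have $B(y, r/2) \subset B(x_0, r)$, whence $u^*(y) \geq \essinf_{B(y, r/2)} u \geq \essinf_{B(x_0, r)} u > u^*(x_0) - \ve$, giving lower semicontinuity.

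One direction of the a.e.\ equality, namely $u^*(x) \leq u(x)$ at every Lebesgue point of $u$, follows from local boundedness of $u$ via Lebesgue's differentiation theorem, since $\essinf_{B(x, r)} u \leq \fint_{B(x, r)} u \, dy \to u(x)$ as $r \to 0$ for almost every $x$.

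The harder converse $u(x) \leq u^*(x)$ a.e.\ is obtained from the weak Harnack inequality (Proposition \ref{prop:weak-harnack}) via a two-scale argument. Fix $x \in \Omega$ and $R > 0$ small enough that $B(x, 6R) \Subset \Omega$. Set $m := \essinf_{B(x, R)} u$ (finite because $u$ is bounded from below) and $w := u - m$. Shifting the obstacle and the boundary data by the constant $-m$ preserves the obstacle-minimizer structure, so $w$ is a locally bounded minimizer of an obstacle problem (with obstacle $\psi - m$) and thus fits the hypotheses of Proposition \ref{prop:weak-harnack}. For $r \in (0, R)$ small enough that $B(x, 6r) \subset B(x, R)$ and $\int_{B(x, 6r)} \phi(\cdot, |\nabla u|)\,dy \leq 1$, the weak Harnack yields
$$\left( \fint_{B(x, r)} w^h \, dy \right)^{1/h} \leq C\bigl[\essinf_{B(x, r/2)} w + r\bigr].$$
At almost every $x$ (Lebesgue points of $w^h$, which is locally bounded) one may send $r \to 0$ to obtain $u(x) - m \leq C(u^*(x) - m)$. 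Finally sending $R \to 0$, one has $m \nearrow u^*(x)$, which cancels the multiplicative constant and yields $u(x) \leq u^*(x)$.

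The main obstacle is orchestrating this two-scale limit correctly: the constant $C$ from the weak Harnack is absorbed only through the outer limit $R \to 0$, not at the inner limit $r \to 0$, so both scales are genuinely needed. A subsidiary technical point is that the shifted function $w$ need not be nonnegative globally, but this is harmless because Proposition \ref{prop:weak-harnack} covers obstacle-problem minimizers without any global sign requirement.
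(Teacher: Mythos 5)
Your proof takes a genuinely different route from the paper. The paper dispatches this statement by citation: a minimizer of the obstacle problem is a local superminimizer (Proposition \ref{prop:obs-is-super}), and the lower semicontinuity of $u^*$ together with $u=u^*$ a.e.\ is then imported wholesale from \cite[Theorem 4.4]{HarH_pp16}. You instead unpack the argument, giving a self-contained derivation from the weak Harnack inequality (Proposition \ref{prop:weak-harnack}), which is essentially the machinery underlying the cited result. Your treatment of lower semicontinuity of $u^*$ and of the direction $u^* \le u$ at Lebesgue points is clean, and the two-scale Harnack argument for $u \le u^*$ a.e.\ is the correct strategy; indeed it is the same shift-by-$m$ device that the paper itself uses in the proof of Lemma \ref{lem:lsc-fint}.

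There is, however, one genuine flaw in the stated reasoning. Your closing remark that ``Proposition \ref{prop:weak-harnack} covers obstacle-problem minimizers without any global sign requirement'' is incorrect: the paper explicitly announces, directly before that proposition, that for the weak Harnack inequality one must additionally assume nonnegativity of the minimizer, and this applies to the obstacle-minimizer alternative as well (otherwise $u^h$ for fractional $h$ is not even meaningful). Your shifted function $w = u - m$ is nonnegative on $B(x,R)$ but need not be nonnegative on all of $\Omega$, so the hypotheses as stated are not literally satisfied. The correct repair—which is exactly what the paper does in Lemma \ref{lem:lsc-fint}—is to use Lemma \ref{lem:min-local} to regard $w$ as a (nonnegative, locally bounded) minimizer of the $\mathcal{K}^{u-m}_{\psi-m}(B(x,R))$-obstacle problem on $B(x,R)$, and apply Proposition \ref{prop:weak-harnack} with $B(x,R)$ playing the role of $\Omega$, which is permissible since $B(x,6r)\Subset B(x,R)$. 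A second, minor point: the Lebesgue-point set at which the inner limit $r\to0$ is justified depends on $R$ through $m$; one should restrict $R$ to a countable sequence tending to $0$ and intersect the corresponding full-measure sets before passing to the outer limit.
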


The next scheme is to use lower semicontinuous representatives to prove continuity of $u$. The first lemma shows that $u$ can be defined pointwise everywhere. 

\begin{lem}
\label{lem:lsc-fint}
Let $\phi \in \Phi_w(\Omega)$ satisfy (A0), (A1-$n$), (aInc) and (aDec). Assume that $u$ is a locally bounded local superminimizer in $\Omega$. Then
\begin{align*}
u^\ast(x)= \lim_{r\to 0} \fint_{B(x,r)} u\, dy 
\end{align*}
for all $x \in \Omega$.
\end{lem}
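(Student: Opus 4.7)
The plan is to establish the two one-sided inequalities $\liminf_{r\to 0}\fint_{B(x,r)} u\,dy \ge u^{\ast}(x)$ and $\limsup_{r\to 0}\fint_{B(x,r)} u\,dy \le u^{\ast}(x)$; the first is a soft consequence of lower semicontinuity, while the second relies on the weak Harnack inequality together with local boundedness.

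For the lower bound, Proposition \ref{prop:obs-is-super} presents $u$ as a minimizer of a $\mathcal{K}^{u}_{u}(\Omega')$-obstacle problem on every $\Omega' \Subset \Omega$, and since $u$ is bounded on $\overline{\Omega'}$, Proposition \ref{prop:obs-lsc} then yields that $u^{\ast}$ is lower semicontinuous on $\Omega$ and $u = u^{\ast}$ almost everywhere. For any $\varepsilon > 0$, lower semicontinuity supplies $\delta > 0$ with $u^{\ast}(y) > u^{\ast}(x) - \varepsilon$ on $B(x,\delta)$; since the averages of $u$ and $u^{\ast}$ coincide, this gives $\fint_{B(x,r)} u\,dy \ge u^{\ast}(x) - \varepsilon$ whenever $r < \delta$.

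For the upper bound I will apply the weak Harnack inequality to a translate of $u$. Fix $R > 0$ so small that $B(x,13R) \Subset \Omega$, set $m_R := \essinf_{B(x,13R)} u$, and put $v := u - m_R$, which is a nonnegative, locally bounded local superminimizer on $B(x,13R)$. Since $|\nabla v| = |\nabla u|$ and $u \in W^{1,\phi}_{\loc}(\Omega)$, the normalisation condition in Proposition \ref{prop:weak-harnack} is satisfied for $R$ small enough, and applying that proposition on $B(x,13R)$ with centre $x$ and radius $2R$ yields an exponent $h>0$ (independent of $R$) with
\begin{align*}
\bigg(\fint_{B(x,2R)} v^h\,dy\bigg)^{1/h} \le C\bigl[\essinf_{B(x,R)} u - m_R + 2R\bigr].
\end{align*}
Both $\essinf_{B(x,R)} u$ and $m_R$ converge to $u^{\ast}(x)$ as $R\to 0$, so the right-hand side tends to $0$.

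Finally, I need to promote this $L^h$-bound to a bound on $\fint v\,dy$. Local boundedness gives $\tilde M := \osc_{B(x,13R_0)} u < \infty$ for some $R_0 > 0$, so that $0 \le v \le \tilde M$ on $B(x,2R)$ whenever $R \le R_0$. If $h \ge 1$, Jensen's inequality gives $\fint v\,dy \le (\fint v^h\,dy)^{1/h}$; if $h < 1$, the pointwise interpolation $v \le \tilde M^{1-h} v^h$ yields $\fint v\,dy \le \tilde M^{1-h}\fint v^h\,dy$. Either way $\fint_{B(x,2R)} v\,dy \to 0$, and rewriting $\fint_{B(x,2R)} u\,dy = m_R + \fint_{B(x,2R)} v\,dy$ produces the $\limsup$-bound. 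The main obstacle is precisely this last step: the weak Harnack inequality controls only an $L^h$-average for some possibly small $h$, and the $L^\infty_{\loc}$-assumption on $u$ is exactly what allows us to promote it to the $L^1$-average appearing in the conclusion.
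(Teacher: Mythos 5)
Your argument is correct and is essentially the same as the paper's: apply the weak Harnack inequality (Proposition \ref{prop:weak-harnack}) to $u$ minus its essential infimum on a slightly larger ball, note that the right-hand side tends to zero because $\essinf_{B(x,R)}u$ and the shifted infimum both converge to $u^{\ast}(x)$, and then use local boundedness to upgrade the resulting $L^{h}$-smallness to $L^{1}$-smallness of $\fint v$. The only stylistic deviation is the initial split into two one-sided inequalities: the lower-bound paragraph is in fact redundant, since $\fint_{B(x,2R)}u\,dy=m_R+\fint_{B(x,2R)}v\,dy$ with $v\ge 0$ already gives $m_R\le\fint u\le m_R+\fint v$, so both sides converge to $u^{\ast}(x)$ once $\fint v\to 0$ is established; the paper uses exactly this one-shot observation and also simplifies the $L^{h}\to L^{1}$ step by noting that one may assume $h\le 1$ by H\"older, avoiding the case distinction.
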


\begin{proof}
Fix $x \in \Omega$ and denote $m_r := \essinf_{B(x,r)} u$ when $r$ is small enough to guarantee that $B(x,6r) \Subset \Omega$. As $u$ is assumed to be locally bounded we may assume that $m_r \leq M < \infty$. Note that $m_r$ is a constant when $x$ and $r$ are fixed. Therefore the function $u-m_{4r}$ is a local superminimizer in the set $B(x,r)$ when $x$ and $r$ are fixed. The weak Harnack inequality (Proposition \ref{prop:weak-harnack}) implies
\begin{align*}
0 \leq \fint_{B(x,6r)} (u-m_{4r})^{h}  \, dy 
\leq C\left [(m_{r}-m_{4r}) + r\right ]^{h}.
\end{align*}
Note that by H\"older's inequality we can choose $h\in (0,1]$ in Proposition \ref{prop:weak-harnack}. Since $u$ is bounded, the right-hand side converges to $0$ as $r \to 0$. Therefore we get
\begin{align*}
\lim_{r\to 0} \fint_{B(x,6r)} (u-m_{4r})^{h} \, dy =0.
\end{align*}
Combining this with the fact that $u$ is locally bounded (Proposition \ref{prop:u-bdd-better}) we find that
\begin{align*}
0 &\leq \fint_{B(x,6r)} u-m_{4r} \, dy \leq \fint_{B(x,6r)} (u-m_{4r})^{h} \sup_{y \in B(x,6r)} (u-m_{4r})^{1-h} \, dy \\
&= \sup_{y \in B(x,6r)} (u-m_{4r})^{1-h}\fint_{B(x,6r)} (u-m_{4r})^{h} \, dy \to 0.
\end{align*}
In conclusion
\begin{align*}
\lim_{r\to 0} \fint_{B(x,6r)} u-m_{4r} \, dy = 0.
\end{align*}
Since $u^\ast$ is the lower semicontinuous representative, the previous limit implies
\begin{align*}
u^\ast(x) &= \essliminf_{y \to x} u(y) = \lim_{r \to 0} m_{4r} =\lim_{r\to 0} \fint_{B(x,6r)} m_{4r} \, dy \\
&=\lim_{r \to 0} \fint_{B(x,6r)} m_{4r} + u - m_{4r} \, dy = \lim_{r\to 0} \fint_{B(x,6r)} u \, dy
\end{align*}
for all $x \in \Omega$.
\end{proof}

Finally we can prove the continuity of the minimizer of a $\mathcal{K}^{f}_{\psi}(\Omega)$-obstacle problem in $\Omega$. This proof is a modification of \cite[Theorem 8.29]{BjoB11}. By lower semicontinuously regularized we mean that $u(x)=\essliminf_{y\to x} u(y)$, that is $u=u^{\ast}$.
\begin{thm}
\label{thm:u-is-continuous}
Assume that $\psi: \Omega \to [-\infty, \infty)$ is continuous and $f \in W^{1,\phi}(\Omega)$. Let $\phi \in \Phi_w(\Omega)$ satisfy (A0), (A1), (A1-$n$), (aInc) and (aDec). Let $u$ be a minimizer of the $\mathcal{K}^{f}_{\psi}(\Omega)$-obstacle problem.
Then the lower semicontinuously regularized representative of a minimizer is continuous. 

Moreover, if $\phi$ is convex, then $u$ is a local minimizer (and therefore locally H\"older continuous) in the open set $A=\{x \in \Omega : u(x) > \psi(x)\}$ with boundary values $u$.
\end{thm}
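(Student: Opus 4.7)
The plan is to reduce to the lower semicontinuous regularization $u^{\ast}$ via Proposition~\ref{prop:obs-lsc}, so we may assume $u = u^{\ast}$ is lower semicontinuous and need only show upper semicontinuity. Since $u$ is lower semicontinuous and $\psi$ is continuous, the set $A = \{u > \psi\}$ is open, so I split the argument into contact points $\{u = \psi\}$ and points of $A$.

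For a contact point $x_{0}$, I apply the supremum estimate of Proposition~\ref{prop:u-bdd-better} with $k_{r} := \sup_{B(x_{0}, 2r)} \psi$; continuity of $\psi$ forces $k_{r} \to \psi(x_{0}) = u(x_{0})$ as $r \to 0$. Setting $m_{8r} := \essinf_{B(x_{0}, 8r)} u$, one has $m_{8r} \leq u(x_{0}) \leq k_{r}$, so $(u-k_{r})_{+} \leq u - m_{8r}$, a nonnegative local superminimizer on $B(x_{0}, 8r)$. Choosing the exponent $q$ in Proposition~\ref{prop:u-bdd-better} no larger than the Harnack exponent $h$ of Proposition~\ref{prop:weak-harnack}, the weak Harnack inequality controls the term on the right-hand side by a multiple of $\essinf_{B(x_{0}, r)} u - m_{8r} + r$, which tends to $0$ by Lemma~\ref{lem:lsc-fint}; this gives upper semicontinuity at $x_{0}$. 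For $x_{0} \in A$, I pick $B \Subset A$ containing $x_{0}$; lower semicontinuity of $u - \psi$ on $\overline{B}$ yields a positive minimum $c_{0}$, and for every bounded $v$ with $\spt v \Subset B$ and $\|v\|_{\infty} < c_{0}$ the competitor $u + v$ lies in $\mathcal{K}_{\psi}^{u}(\Omega)$, so minimality gives a two-sided local minimization inequality for bounded test functions. This is enough input for the Caccioppoli, supremum, and weak Harnack machinery from \cite{HarH_pp16} to run the standard De Giorgi oscillation-decay argument and yield continuity at $x_{0}$.

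For the ``moreover'' part, assume $\phi$ is convex and fix $v \in W^{1,\phi}(\Omega)$ with $\spt v \Subset A$; by Lemma~\ref{lem:bdd-local-min} it suffices to consider $v$ bounded. On the compact set $\spt v$ the lower semicontinuous function $u - \psi$ is bounded below by some $c_{0} > 0$, so for $t \in (0, c_{0}/\|v\|_{\infty}]$ the competitor $u + tv$ lies in $\mathcal{K}_{\psi}^{u}(\Omega)$, and minimality yields $\int_{\Omega} \phi(x, |\nabla u|)\, dx \leq \int_{\Omega} \phi(x, |\nabla(u + tv)|)\, dx$. Rewriting $\nabla u + t \nabla v = (1-t) \nabla u + t \nabla(u+v)$ and invoking convexity and monotonicity of $\phi(x, \cdot)$, the right-hand side is bounded above by
\begin{align*}
(1-t) \int_{\Omega} \phi(x, |\nabla u|) \, dx + t \int_{\Omega} \phi(x, |\nabla(u+v)|) \, dx,
\end{align*}
and cancelling $(1-t)$ times the left-hand side produces the full local minimizer inequality with $v$; local H\"older continuity on $A$ then follows from standard interior regularity of local minimizers in the generalized Orlicz setting. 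The step I expect to be most delicate is the continuity argument on $A$ without convexity, since only bounded-perturbation minimality is then available, and one must verify that the truncations of $u$ used in the Caccioppoli, supremum, and weak Harnack estimates of the preceding results all remain admissible within that framework.
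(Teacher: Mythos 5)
Your split into contact points $\{u=\psi\}$ and points of $A=\{u>\psi\}$ is unnecessary and introduces a genuine gap at the $A$-points. There you only derive bounded-perturbation minimality, namely $\int\phi(x,|\nabla u|)\,dx \leq \int\phi(x,|\nabla(u+v)|)\,dx$ for $\|v\|_{\infty}<c_{0}$, and then assert that this suffices to run the Caccioppoli--supremum--Harnack machinery. Without convexity, which is not assumed in the first part of the theorem, you cannot upgrade this to full local-minimizer status, and the Caccioppoli test functions $-\eta(u-k)_{+}$ for the range of $k$ appearing in the De Giorgi iteration need not satisfy $\|v\|_{\infty}<c_{0}$: their sup is of order $\osc_{2B} u$, which is precisely the quantity you are trying to control. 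You flag this as the delicate step, and it is indeed a real gap, not a detail.

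The resolution, and the paper's route, is that no local-minimizer status is needed anywhere. The Caccioppoli inequality for minimizers of the obstacle problem (Lemma~\ref{lem:basic-Caccioppoli}) and hence the supremum estimate of Proposition~\ref{prop:u-bdd-better} are already available for every $k\geq\sup_{2B}\psi$, with no appeal to minimizer status in $A$. The paper then makes a single choice $k=u(x)+\varepsilon$ at an arbitrary $x\in\Omega$: admissibility follows from $u(x)=\essliminf_{y\to x}u(y)\geq\psi(x)\geq\sup_{2B}\psi-\varepsilon$, which holds uniformly because $u\geq\psi$ a.e.\ and $\psi$ is continuous. Passing to the limit via Lemma~\ref{lem:lsc-fint}, which already encapsulates the weak Harnack argument you carry out by hand, closes the proof at every point without a case distinction. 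Your contact-point argument (taking $k_{r}=\sup_{2B}\psi$, dominating $(u-k_{r})_{+}$ by $u-m_{8r}$, then applying Proposition~\ref{prop:weak-harnack}) is a correct variant of the same idea, but it only works because $u(x_{0})=\psi(x_{0})$ there, and so it does not extend to $A$ as you set things up. Your ``moreover'' argument is essentially identical to the paper's and is correct: the convexity trick $w=u+tv$, expansion via $\phi(x,(1-t)|\nabla u|+t|\nabla(u+v)|)$, and cancellation give the local minimizer property on $A$, and H\"older continuity then follows from the existing interior theory.
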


\begin{proof}
Let us denote the lower semicontinuous representative of $u$ still by $u$. To show that $u$ is continuous, we need to prove that 
\begin{align*}
\limsup_{y\to x} u(y)\leq u(x)
\end{align*} 
for all $x \in \Omega$. By local boundedness (Proposition \ref{prop:u-bdd}) and lower semicontinuity this implies that $u$ is real valued and continuous.

Let $x \in \Omega$ and $\varepsilon$ be positive.
By continuity of $\psi$ we can pick a radius $r$ such that $B := B(x,r)\Subset 2B \Subset \Omega$ and $\sup_{2B} \psi \leq \psi(x) + \varepsilon$. Also, the ball $B$ can be chosen to satisfy
\begin{align}
\label{eq:inf-epsilon}
\essinf_{B} u > u(x) - \varepsilon,
\end{align}
as $u$ is finite by the Proposition \ref{prop:u-bdd-better} and it is lower semicontinuous.
Now lower semicontinuity of $u$ and continuity of $\psi$ imply that
\begin{align*}
u(x) = \essliminf_{y \to x} u(y) \geq \essliminf_{y \to x} \psi(y) = \psi(x) \geq \sup_{2B} \psi -\varepsilon.
\end{align*}
Now from Proposition \ref{prop:u-bdd-better} and \eqref{eq:inf-epsilon} we have for $k = u(x) + \varepsilon$, $q=1$ and $B' := B(x, r'), 0< r' < r$,
\begin{align*}
\esssup_{\frac{1}{2}B'} (u-(u(x)+\varepsilon)) &\leq C \fint_{2B'} (u-(u(x)+\varepsilon))_+ \, dy + r' \\
&\leq C \fint_{2B'} (u- (u(x)-\varepsilon))_+ \, dy + r' \\
&= C \fint_{2B'} (u-(u(x)-\varepsilon)) \, dy + r' \\
&= C \left ( \fint_{2B'} u \, dy - u(x) + \varepsilon\right ) + r' .
\end{align*}

From Lemma \ref{lem:lsc-fint} we have
\begin{align*}
u(x) = \lim_{r\to 0} \fint_{B(x,r)} u(y) \, dy.
\end{align*}
Therefore 
\begin{align*}
\esslimsup_{y \to x} u(y) - u(x) - \varepsilon \leq C\varepsilon.
\end{align*}
Thus the claim follows by letting $\varepsilon \to 0^+$.

Next we prove the second claim. We see that $A$ is open by the continuity of $u$ and $\psi$. Since $\phi$ satisfies (aDec), by Lemma \ref{lem:bdd-local-min} it is enough to test the local minimizer with bounded and compactly supported Sobolev functions. Therefore, let $v \in W^{1,\phi}(A)$ be bounded and compactly supported. Since $u$ and $\psi$ are continuous and $u>\psi$ in $A$, there exists $\varepsilon>0$ such that $u \geq \psi + \varepsilon$ in the compact set $\spt v \subset A$. By boundedness of $v$, we can choose $t \in (0,1)$ such that
\begin{align*}
w:=(1-t)u + t(u+v) = u + tv \geq \psi
\end{align*}
in $A$. Therefore $w \in \mathcal{K}^{u}_{\psi}(A)$. Now, since $u$ is a minimizer of the $\mathcal{K}^{u}_{\psi}(A)$-obstacle problem (Lemma \ref{lem:min-local}) and $\phi$ is convex, we see that
\begin{align*}
\int_{A} \phi(x, |\nabla u|) \, dx &\leq \int_{A} \phi(x, |\nabla w|) \, dx \leq \int_{A} \phi(x, (1-t)|\nabla u| + t |\nabla (u+v)|) \, dx \\
&\leq (1-t) \int_{A} \phi(x, |\nabla u|) \, dx + t \int_{A} \phi(x, |\nabla (u+v)|) \, dx. 
\end{align*}
Next we subtract the first term on the right-hand side and divide by $t$ to obtain
\begin{align*}
\int_{A} \phi(x, |\nabla u|) \, dx \leq \int_{A} \phi(x, |\nabla (u+v)|) \, dx.
\end{align*}
Now, since $|\nabla v| =0$ almost everywhere in the set $\{v =0\}$, we get
\begin{align*}
\int_{\{v \not =0\}} \phi(x, |\nabla u|) \, dx &+ \int_{\{v=0\}}\phi(x,|\nabla u|) \, dx = \int_{A} \phi(x, |\nabla u|) \, dx \leq \int_{A} \phi(x, |\nabla (u+v)|) \, dx \\
&= \int_{\{v \not =0\}} \phi(x, |\nabla (u+v)|) \, dx + \int_{\{v=0\}}\phi(x,|\nabla u|) \, dx.
\end{align*}
Subtracting the last term on the right-hand side from both sides, we get
\begin{align*}
\int_{\{v \not=0\}} \phi(x, |\nabla u|) \, dx \leq \int_{\{v \not=0\}} \phi(x, |\nabla (u+v)|) \, dx.
\end{align*}
By Lemma \ref{lem:bdd-local-min}, $u$ is a local minimizer in $A$ and from \cite[Corollary 1.5]{HarHT17} we obtain local H\"older continuity of $u$ in $A$.
\end{proof}

\section{Continuity up to the boundary}
\label{sec:boundary-continuity}

In order to prove the first main theorem, we need to define regular boundary points of a set $\Omega$. In \cite[Theorem 1.1]{HarH_pp16} it was proven that a point is regular if the so called $\phi$-fatness condition is satisfied at $x_0$ and if $\phi$ is regular enough. In Proposition \ref{lem:dens-fat} we prove that the measure density condition \eqref{eq:measure-density} implies $\phi$-fatness when $q<n$.

\begin{defn}
Let $H(f)$ denote the minimizer with boundary values $f \in W^{1,\phi}(\Omega)$. If $g \in C(\partial \Omega)$, then
\begin{align*}
H_g(x) := \sup_{\substack{f\leq g \\ f \text{ is Lipschitz}}} H(f)(x).
\end{align*}

Let $\Omega \subset \R^n$. A point $x \in \partial \Omega$ is called \emph{regular} if
\begin{align*}
\lim_{\substack{y \to x \\ y \in \Omega}} H_f(y) = f(x) 
\end{align*}
for all $f \in C(\partial \Omega)$.
\end{defn}

The next theorem is the main result of \cite{HarH_pp16}. 

\begin{thm}
\label{thm:no-obstacle}
Let $\Omega \subset \R^n$ be bounded and $x_0 \in \partial \Omega$. Let $\phi \in \Phi_c(\R^n)$ be strictly convex and satisfy (A0), (A1), (A1-n), (aInc) and (aDec). If $\Omega$ is locally $\phi$-fat at $x_0$, then $x_0$ is a regular boundary point.
\end{thm}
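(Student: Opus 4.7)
The plan is to follow a Wiener-type approach adapted to the generalized Orlicz setting, as carried out in \cite{HarH_pp16}. First I would reduce to a one-sided estimate: using the elementary properties $H_{-f} = -H_f$ and $H_{f+c} = H_f + c$, it suffices to show that for every $\varepsilon > 0$,
\begin{align*}
\limsup_{y \to x_0,\, y \in \Omega} H_f(y) \leq f(x_0) + \varepsilon,
\end{align*}
and then to apply the same argument to $-f$ for the reverse inequality.

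Next I would construct a local upper barrier. By continuity of $f$, pick $r > 0$ small enough that $f \leq f(x_0) + \varepsilon$ on $\partial \Omega \cap B(x_0, r)$. On the dyadic balls $B_k := B(x_0, 2^{-k} r)$, the constant $f(x_0) + \varepsilon$ will play the role of an obstacle on $B_k \setminus \Omega$, and the Comparison Principle (Proposition \ref{prop:comparison}) together with the identification of superminimizers and obstacle solutions (Proposition \ref{prop:obs-is-super}) reduces the claim to an oscillation decay estimate along the sequence $\{B_k\}$.

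The core step is the oscillation decay: I would show that there exist $\theta \in (0,1)$ and $C > 0$, depending only on the fatness constant and the parameters of $\phi$, such that
\begin{align*}
\osc_{B_{k+1} \cap \Omega} H_f \leq \theta \, \osc_{B_k \cap \Omega} H_f + C \cdot 2^{-k} r.
\end{align*}
This should follow by applying the weak Harnack inequality (Proposition \ref{prop:weak-harnack}) to the non-negative local superminimizer $M_k - H_f$, where $M_k := \esssup_{B_k \cap \Omega} H_f$, combined with the Caccioppoli inequality (Lemma \ref{lem:basic-Caccioppoli}). The $\phi$-fatness of $\Omega^c$ at $x_0$ supplies the capacity lower bound on $\{H_f \leq f(x_0) + \varepsilon\} \supseteq B_k \setminus \Omega$, which forces a definite positive average of $M_k - H_f$ on $B_{k+1}$ and produces the gain factor $\theta < 1$. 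Iterating this decay over $k$ together with the local boundedness of $H_f$ (Proposition \ref{prop:u-bdd}) then yields continuity at $x_0$.

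The main obstacle will be this oscillation decay step, because in the generalized Orlicz setting minimizers lack homogeneity: multiplying a minimizer by a constant does not give a minimizer of the same problem, so rescaling tricks available in the $p$-Laplace case are not at hand. This is precisely where (A1) and (A1-$n$) enter: they guarantee that $\phi^+_{B_k}$ and $\phi^-_{B_k}$ remain comparable on the ranges determined by $|B_k|^{-1}$ and $\diam(B_k)^{-1}$, which keeps the constants $\theta$ and $C$ uniform across all scales. Strict convexity of $\phi$ is used to invoke the Comparison Principle that drives the barrier construction.
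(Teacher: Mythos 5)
The paper does not prove this theorem at all: immediately before the statement, the text says ``The next theorem is the main result of \cite{HarH_pp16}'' and cites it as a black box. There is therefore no in-paper proof to compare against, and your task was really to reconstruct the argument of \cite{HarH_pp16}. Your sketch does outline the expected Wiener-type strategy (reduction to a one-sided limsup bound via $H_{-f}=-H_f$ and $H_{f+c}=H_f+c$, a constant upper barrier on small balls via the comparison principle, oscillation decay along $B_k=B(x_0,2^{-k}r)$ with an additive error term $C\,2^{-k}r$ to compensate for the lack of homogeneity, iteration), and the closing remarks about why (A1)/(A1-$n$) are needed and why rescaling is unavailable are on point.

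That said, as a proof plan there are two steps that are stated too loosely and are in fact the technical heart of the argument. First, applying the weak Harnack inequality (Proposition \ref{prop:weak-harnack}) requires a nonnegative superminimizer on a full ball $B_k$, not merely on $B_k\cap\Omega$. The standard device is a pasting lemma: one truncates $H_f$ at the barrier level, extends by the constant $f(x_0)+\varepsilon$ across $B_k\setminus\Omega$, and must verify that the resulting function is a superminimizer in $B_k$. You never name this extension/pasting step, but it is precisely where $W^{1,\phi}_0$-membership and the structure of the functional enter, and it is not automatic. Second, you say the fatness condition ``forces a definite positive average of $M_k-H_f$'' without naming the mechanism. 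What is actually needed is a capacitary Sobolev--Poincar\'e (Maz'ya-type) inequality in the generalized Orlicz scale, which converts the lower capacity bound on $B_k\setminus\Omega$ into a quantitative lower bound on $\bigl(\fint_{B_k}(M_k-H_f)^h\,dx\bigr)^{1/h}$ relative to $M_k-\essinf_{B_{k+1}\cap\Omega}H_f$. Neither this inequality nor the pasting lemma appears in the paper's toolbox (Lemmas \ref{lem:jensen}, \ref{lem:basic-Caccioppoli}, Propositions \ref{pro:Poincare_Sebastian_s}, \ref{prop:weak-harnack}), so invoking them is a genuine gap: they would have to be established separately, as is done in \cite{HarH_pp16}. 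Until those two lemmas are in place, the oscillation decay estimate you write down is a target, not a consequence of what you have cited.
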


Most often capacity of balls is somewhat straightforward to compute. This is the case also with $\phi$-capacity, if we assume (aDec), as we have the estimate \cite[Lemma 2.8]{HarH_pp16}
\begin{align}
\label{eq:ball-cap}
c |B|\phi^{-}_{2B}\left ( \tfrac1r\right ) \leq C_{\phi}(B,2B) \leq c |B|\phi^{+}_{2B}\left (\tfrac1r\right ).
\end{align}
It is also noteworthy to mention that upper and lower bounds are comparable when (A1-$n$) is in force.

Next we extend the relation between measure density condition and capacity fatness to generalized Orlicz case. Note that the assumption $q<n$ corresponds to the classical $p$-fatness situation, where it is commonly assumed that $p<n$ since otherwise singleton sets have positive capacity.

In the following proof, we need Poincar\'e inequality for the function $\phi^-$. This can be proven in the almost same way as in \cite[Proposition 6.2.10]{HarH18} with assumptions (A0) and (A1). The necessary modification is to take an equivalent convex $\Phi$-function $\eta$ and use \cite[Lemma 4.3.2]{HarH18} instead of the Key estimate \cite[Theorem 4.3.3]{HarH18}. This has the advantage of not introducing the additive term as in the general Poincar\'e inequality for generalized Orlicz functions. By the assumption (aDec) we can place the constant of equivalence in front of $\phi^-$.

\begin{lem}
\label{lem:dens-fat}
Let $\phi \in \Phi_w(\R^n)$ satisfy (A0), (A1), (A1-$n$) and (aDec)$_q$. If $q<n$ and the measure density condition $\eqref{eq:measure-density}$ is satisfied at $x_0$, then the complement of $\Omega$ is locally $\phi$-fat at $x_0$. 
\end{lem}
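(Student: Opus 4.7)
The strategy is to produce, for every test function admissible in the definition of $C_\phi(E,2B)$ with $E:=B(x_0,r)\setminus\Omega$, an energy lower bound that matches the sharp capacity upper bound for $C_\phi(B(x_0,r),2B)$ supplied by \eqref{eq:ball-cap}. Fatness then follows by comparing the two estimates.

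First I would fix $r>0$ small enough that $4r\leq 1$, so that (A1-$n$) is available on $2B$, and pick any admissible $u\in W^{1,\phi}_0(2B)$ with $u\geq 1$ on a neighborhood of $E$; by replacing $u$ with $\min\{u_+,1\}$ I may assume $0\leq u\leq 1$. Since $\phi^-_{2B}\leq\phi(x,\cdot)$ on $2B$, the function $u$ also lies in $W^{1,\phi^-_{2B}}_0(2B)$, so I would apply to it the zero-boundary Poincaré inequality for the autonomous $\Phi$-function $\phi^-_{2B}$ constructed in the paragraph preceding the lemma, producing
\[
\int_{2B}\phi^-_{2B}\!\bigl(\beta u/r\bigr)\,dx\leq C\int_{2B}\phi^-_{2B}(|\nabla u|)\,dx\leq C\int_{2B}\phi(x,|\nabla u|)\,dx
\]
for some $\beta\in(0,1)$ independent of $u$.

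Next I would bound the left-hand side from below using the measure density hypothesis: since $u\geq 1$ on $E$ and $\phi^-_{2B}$ is increasing, the integrand is at least $\phi^-_{2B}(\beta/r)$ on $E$, whence
\[
\int_{2B}\phi^-_{2B}(\beta u/r)\,dx\geq |E|\,\phi^-_{2B}(\beta/r)\geq c_\ast|B|\,\phi^-_{2B}(\beta/r),
\]
and (aDec)$_q$ lets me replace $\phi^-_{2B}(\beta/r)$ by $\phi^-_{2B}(1/r)$ up to a multiplicative constant. Taking the infimum over admissible $u$ yields $c_\ast|B|\,\phi^-_{2B}(1/r)\lesssim C_\phi(E,2B)$. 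To convert the right-hand side into $C_\phi(B(x_0,r),2B)$, I would use the upper bound $C_\phi(B(x_0,r),2B)\leq c|B|\phi^+_{2B}(1/r)$ from \eqref{eq:ball-cap} and then appeal to (A1-$n$) at $t=1/(4r)=1/\diam(2B)$ (admissible since $r\leq 1/4$) to obtain $\phi^+_{2B}(\beta/(4r))\leq\phi^-_{2B}(1/(4r))\leq\phi^-_{2B}(1/r)$; a second invocation of (aDec)$_q$ slides $\phi^+_{2B}$ from $\beta/(4r)$ up to $1/r$ at the cost of a constant, giving $\phi^+_{2B}(1/r)\lesssim\phi^-_{2B}(1/r)$ and therefore $C_\phi(B(x_0,r),2B)\lesssim c_\ast^{-1}C_\phi(E,2B)$, which is the claimed $\phi$-fatness.

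The hard part is the Poincaré step: I genuinely need a zero-boundary Poincaré inequality for $\phi^-_{2B}$ with \emph{no} additive constant, since a ``$+1$'' on the right would spoil the lower bound once $\phi^-_{2B}(1/r)$ becomes large for small $r$. This is exactly the point of the remark preceding the lemma (passing to an equivalent convex $\Phi$-function and using \cite[Lemma 4.3.2]{HarH18}), and it is also where the assumption $q<n$ enters the argument, by supplying the Sobolev-type embedding that underlies such a clean Poincaré inequality. The rest of the proof is bookkeeping with (A1-$n$) and (aDec)$_q$ to align the scales of $\phi^+_{2B}$ and $\phi^-_{2B}$ at the argument $1/r$.
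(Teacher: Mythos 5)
Your proof is correct and follows essentially the same route as the paper: normalize the test function, apply the zero-boundary Poincar\'e inequality for the autonomous function $\phi^-_{2B}$ (the additive-constant-free version justified in the paragraph preceding the lemma), use the measure density condition to turn $|E|$ into $c_\ast|B|$, take the infimum, and then invoke \eqref{eq:ball-cap} together with (A1-$n$) and (aDec)$_q$ to align $\phi^+_{2B}(1/r)$ with $\phi^-_{2B}(1/r)$. The only quibble is your attribution of where $q<n$ enters: the zero-boundary Poincar\'e inequality for $\phi^-_{2B}$ is essentially a Friedrichs-type inequality and does not rest on a Sobolev embedding of exponent $q$; the paper itself simply remarks that $q<n$ is the natural analogue of the classical assumption $p<n$ (so that singletons do not carry positive capacity and the fatness notion is non-degenerate), and the hypothesis is not visibly consumed in the displayed chain of inequalities. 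This does not affect the correctness of your argument, since you only use $q<n$ as motivation rather than in a concrete estimate.
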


\begin{proof}
Denote $E:=B(x_0,r) \setminus \Omega$.

Now with (aDec) and Poincar\'e inequality \cite[Corollary 7.4.1]{HarH18} we estimate
\begin{align*}
\phi_{2B}^{-}\left (\tfrac 1r \right ) |E| &= \int_{E} \phi_{2B}^{-}\left (\tfrac 1r \right ) \, dx \leq \int_{E} \phi_{2B}^{-}\left (\tfrac vr \right ) \, dx \leq L \beta^{-q} \int_{2B} \phi^- \left ( \dfrac{\beta v}{r} \right ) \, dx \\
&\leq  C  \int_{2B} \phi^-(|\nabla v|) \, dx \leq C \int_{2B} \phi(x, |\nabla v|) \, dx.
\end{align*}

Taking infimum over functions $v$ we get
\begin{align}
\label{eq:cap-meas-phi}
C_{\phi}(E,2B) \geq C |E| \phi_{2B}^{-}\left (\tfrac 1r \right ).
\end{align}

As (A1-$n$) implies that $\phi^{+}_{2B}(\tfrac 1r)$ and $\phi^{-}_{2B}(\tfrac 1r)$ are comparable, from \eqref{eq:cap-meas-phi} and the measure density condition \eqref{eq:measure-density} we deduce
\begin{align*}
C_\phi (B(x_0,r) \setminus \Omega, 2B) &\geq c_\ast \, |B(x_0,r) \setminus \Omega| \, \phi_{2B}^{-}\left (\tfrac 1r \right )\\ 
&\geq c_\ast \, |B| \, \phi_{2B}^{-}\left (\tfrac 1r\right ) \\
&\geq c_\ast \, |B|\, \phi^+_{2B}\left (\tfrac 1r \right ) \\
&\geq c_\ast \,  C_{\phi}(B(x_0,r), 2B),
\end{align*}
where the last inequality follows from \eqref{eq:ball-cap}. Thus the capacity fatness condition is satisfied at $x_0$.
\end{proof}

Finally we are ready to prove the continuity of a minimizer up to the boundary.

\begin{proof}[Proof of Theorem \ref{thm:main-result}]
By Lemma \ref{lem:f-above-psi} we can assume that $f \geq \psi$. Let us first show that
\begin{align}
\label{eq:u-leq-f}
\limsup_{x \to x_0} u(x) \leq f(x_0).
\end{align}
Let us denote $D:=\{x \in \Omega : u(x) > f(x)\}$. If $D=\emptyset$, then \eqref{eq:u-leq-f} holds trivially. Let us then suppose that $D$ is not the empty set. If $x_0 \not \in \partial \Omega \cap \partial D$, then there would exist an open set $U \subset \Omega \setminus \overline{D}$ containing $x_0$ and \eqref{eq:u-leq-f} would follow again trivially. Therefore let $x_0 \in \partial \Omega \cap \partial D$. First we need to show that $u-f \in W^{1,\phi}_0(D)$. 

Since $\phi$ satisfies (A0), (A1), (aDec) and $D$ is bounded, $C^\infty(D)\cap W^{1,\phi}(D)$ is dense in $W^{1,\phi}(D)$ \cite[Theorem 6.4.6]{HarH18}. Let us denote $v_j :=  \max\{u-f-\tfrac 1j, 0 \}$ and notice by continuity of $u$ and $f$ that it has compact support in $D$ for every $j$. From \cite[Lemma 3.4]{HarHT17} we have that compactly supported Sobolev--Orlicz functions belong to $W^{1,\phi}_0(D)$, especially $v_j \in W^{1,\phi}_0(D)$ for every $j$. By monotone convergence \cite[Theorem 4.1]{HarHK16}, $u-f-\tfrac 1j$ converges to $u-f$ in $W^{1,\phi}(D)$ and therefore by Lemma \ref{lem:min-max-conv} $(v_j)$ has a subsequence converging to $\max\{u-f,0\} =u-f$ in $W^{1,\phi}(D)$. Since $W^{1,\phi}_0(D)$ is closed, we see that $u-f \in W^{1,\phi}_0(D)$.

Since by assumption, $f \geq \psi$ in $\Omega$, by Theorem \ref{thm:u-is-continuous} $u$ is a local minimizer in $D$ with $u-f \in W^{1,\phi}_0(D)$.
Since $D \subset \Omega$, the capacity fatness condition with respect to $D$ is satisfied at $x_0$: 
\begin{align*}
C_{\phi} (B(x_0,r) \setminus D, B(x_0,2r)) &\geq C_{\phi} (B(x_0,r) \setminus \Omega, B(x_0,2r))\\
&\geq c \, C_{\phi}(B(x_0,r), B(x_0,2r)),
\end{align*}
where the first inequality follows from monotonicity of capacity \cite[(C2) on p. 6]{HarH_pp16}. Now it follows from Theorem \ref{thm:no-obstacle} that $x_0 \in \partial \Omega \cap \partial D$ is a regular boundary point, that is
\begin{align*}
\lim_{\substack{x \to x_0 \\ x \in D}} u(x) = f(x_0).
\end{align*}
Since $u \leq f$ in $\Omega \setminus D$ we get \eqref{eq:u-leq-f}.

It remains to show that
\begin{align}
\label{eq:u-geq-f}
\liminf_{x \to x_0} u(x) \geq f(x_0).
\end{align}
Let $h$ be the unique minimizer with $h-f \in W^{1,\phi}_0(\Omega)$. By the comparison principle (Proposition \ref{prop:comparison}) we have that $h \leq u$ in $\Omega$. Therefore by regularity of $x_0$ we get
\begin{align*}
\liminf_{x \to x_0} u(x) \geq \lim_{x \to x_0} h(x) = f(x_0).
\end{align*}
Together \eqref{eq:u-leq-f} and \eqref{eq:u-geq-f} yield the result.
\end{proof}

\section{Higher integrability of the gradient}

We start by proving two Caccioppoli inequalities: one inside the domain and one near the boundary. The proofs are quite standard and similar usage of test functions can be found from example in \cite{Che16}. Of the assumptions in the following Caccioppoli inequality (A0), (A1) and (aInc) are only to use Sobolev--Poincar\'e inequality for $\psi$, which combines terms involving  $\psi-\psi_{2B}$ and $\nabla \psi$ for simpler result. Compared to the Caccioppoli inequality previously presented in Lemma \ref{lem:basic-Caccioppoli}, now we do not limit ourselves to the positive part of the minimizer and the obstacle appears as an energy rather than a bound for the constant $k$. The second Caccioppoli inequality on the other hand leverages the boundary function rather than the obstacle.

\begin{lem}[Interior Caccioppoli inequality]
\label{lem:interior-Caccioppoli}
Let $\phi\in \Phi_w(\Omega)$ satisfy (A0), (A1), (aInc) and (aDec),
and let $u$ be a minimizer of the $\mathcal{K}^{f}_{\psi}(\Omega)$-obstacle problem where $f, \psi \in W^{1,\phi}(\Omega)$. Then we have
\begin{align}\label{eq:caccioppoli-2}
\fint_{B} \phi(x,|\nabla u|) \,dx \leq C\fint_{2B} \phi \left (x, \dfrac{|u-u_{2B}|}{\diam (2B)} \right ) \, dx + C \fint_{2B} \phi(x, |\nabla \psi|) \, dx + C,
\end{align}
in the ball $B$ with and $2B \subset \Omega$, $\|\nabla \psi\|_{L^\phi(2B)} < 1$ and a constant $C=C(n,\phi)$.
\end{lem}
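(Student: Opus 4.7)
The plan is to apply a Caccioppoli-type test function adapted to the obstacle, but with $\psi$ replaced by a vertical shift so that the Poincar\'e-type term ends up involving $u-u_{2B}$ and $\psi-\psi_{2B}$ instead of $u-\psi$. Since $u\geq\psi$ almost everywhere gives $u_{2B}\geq\psi_{2B}$, I would set $c:=u_{2B}-\psi_{2B}\geq 0$ and $w:=\psi+c$; then $w\geq\psi$, $\nabla w=\nabla\psi$, and crucially $w_{2B}=u_{2B}$, so that $w-u=(\psi-\psi_{2B})-(u-u_{2B})$ carries no residual constant. This shift is the essential step: the naive choice $w=\psi$ would produce an uncontrollable additive constant $|u_{2B}-\psi_{2B}|$ on the right hand side.

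For $R\leq \rho<\sigma\leq 2R$, where $R$ is the radius of $B$, I would take $\eta\in C_0^\infty(B_\sigma)$ with $\eta=1$ on $B_\rho$, $0\leq\eta\leq 1$ and $|\nabla\eta|\leq C/(\sigma-\rho)$, and define $v:=(1-\eta)u+\eta w$. Then $v\geq\psi$ and $v-u=\eta(w-u)$ has compact support in $2B\Subset\Omega$, so $v\in \mathcal{K}^f_\psi(\Omega)$. Combining the minimality of $u$ with the identity $\nabla v=(1-\eta)\nabla u+\eta\nabla\psi+(w-u)\nabla\eta$, the bound $|w-u|\leq |u-u_{2B}|+|\psi-\psi_{2B}|$, inequality \eqref{eq:phi-triangle} (i.e.\ (aDec)), and the fact that $(1-\eta)|\nabla u|$ vanishes on $B_\rho$, leads to
\begin{align*}
\int_{B_\rho}\phi(x,|\nabla u|)\,dx &\leq C\int_{B_\sigma\setminus B_\rho}\phi(x,|\nabla u|)\,dx + C\int_{2B}\phi(x,|\nabla\psi|)\,dx \\
&\quad + C\int_{2B}\phi\left(x,\tfrac{|u-u_{2B}|+|\psi-\psi_{2B}|}{\sigma-\rho}\right)dx.
\end{align*}

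The usual hole-filling trick, namely adding $C\int_{B_\rho}\phi(x,|\nabla u|)\,dx$ to both sides, combined with the iteration lemma (Lemma \ref{lem:iteration}) -- after using (aDec) to factor out $(2R/(\sigma-\rho))^q$ from the last integral so that the right hand side has the required form $\theta Z(\sigma)+X(1/(\sigma-\rho))$ with $X$ increasing and doubling -- eliminates the annulus integral. Dividing by $|B|$ and rescaling $1/(2R)\sim 1/\diam(2B)$ via (aDec) then yields
\begin{align*}
\fint_B\phi(x,|\nabla u|)\,dx &\leq C\fint_{2B}\phi(x,|\nabla\psi|)\,dx + C\fint_{2B}\phi\left(x,\tfrac{|u-u_{2B}|}{\diam(2B)}\right)dx \\
&\quad + C\fint_{2B}\phi\left(x,\tfrac{|\psi-\psi_{2B}|}{\diam(2B)}\right)dx.
\end{align*}
Finally, the Sobolev--Poincar\'e inequality (Proposition \ref{pro:Poincare_Sebastian_s}) applied to $\psi$ with $s=1$, which is legitimate because $\|\nabla\psi\|_{L^\phi(2B)}<1$, bounds the last average by $C\fint_{2B}\phi(x,|\nabla\psi|)\,dx+C$ (the constant $\beta_1$ being absorbed via (aDec)). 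This is the only step requiring (A0), (A1) and (aInc), and it accounts for the additive constant $C$ in \eqref{eq:caccioppoli-2}. The hardest part of the argument is thus a matter of choosing the right shift $c=u_{2B}-\psi_{2B}$ in the test function; once that is in place, what remains is a routine combination of minimality, doubling, hole filling, iteration and Sobolev--Poincar\'e.
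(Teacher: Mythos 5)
Your proposal is correct and takes essentially the same route as the paper: your test function $v=(1-\eta)u+\eta w$ with $w=\psi+(u_{2B}-\psi_{2B})$ coincides (up to subtracting the constant $u_{2B}$, which affects neither gradients nor admissibility) with the paper's $v=u-u_{2B}-\eta\big(u-u_{2B}-(\psi-\psi_{2B})\big)$ used for the shifted obstacle problem $\mathcal{K}^{f-u_{2B}}_{\psi-u_{2B}}(\Omega)$. The subsequent hole-filling, iteration (Lemma \ref{lem:iteration}), and Sobolev--Poincar\'e absorption of the $\psi-\psi_{2B}$ term with $s=1$ match the paper's argument step for step.
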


\begin{proof}
Choose $1\leq s < t \leq 2$. Let $\eta \in C^{\infty}_0(tB)$ be a cut-off function such that $\eta =1$ in $sB$, $0\leq \eta \leq 1$, $\eta =0$ in $2B\setminus tB$ and $|\nabla \eta| \leq \frac{2}{(t-s)r}$. Let $v$ be the following test function 
\begin{align*}
v:=u-u_{2B} -\eta(u-u_{2B} -(\psi-\psi_{2B})).
\end{align*}
First, it needs to be shown that $v$ is an admissible test function for a suitable obstacle problem. Indeed, $v \in \mathcal{K}^{f-u_{2B}}_{\psi-u_{2B}}(\Omega)$ since $v-(f- u_{2B}) \in W^{1,\phi}_0(\Omega)$, because $\eta \in C_0^{\infty}(2B)$, and 
\begin{align*}
v&=(1-\eta) (u-u_{2B}) + \eta (\psi - \psi_{2B})\\
&\geq (1-\eta) (\psi-u_{2B}) + \eta (\psi - u_{2B}) = \psi- u_{2B}
\end{align*}
almost everywhere in $\Omega$ because $u \geq \psi$ almost everywhere in $\Omega$.

A direct calculation yields
\begin{align*}
|\nabla v| \leq (1-\eta)|\nabla u| + \eta |\nabla \psi| + |u-u_{2B} -(\psi-\psi_{2B})| |\nabla \eta|.
\end{align*}
Since $u$ is a minimizer of the obstacle problem $\mathcal{K}^{f}_{\psi}(\Omega)$, we deduce that $u-u_{2B}$ is a minimizer of $\mathcal{K}^{f-u_{2B}}_{\psi-u_{2B}}(\Omega)$ for which $v$ is an admissible test function. Therefore it follows from Lemma \ref{lem:min-local} that
\begin{align*}
\int_{tB} \phi(x, |\nabla u|) \, dx &= \int_{tB} \phi(x, |\nabla(u-u_{2B})|) \, dx \leq  \int_{tB} \phi(x, |\nabla v|) \, dx \\
& \leq  \int_{tB} \phi \big (x, (1-\eta)|\nabla u| + \eta |\nabla \psi| + (u-u_{2B} -(\psi-\psi_{2B})) |\nabla \eta| \big ) \, dx.
\end{align*}
Using (aDec) and the definition of $\eta$ we get 
\begin{align*}
\int_{tB} \phi(x, |\nabla u|) \, dx \leq C \int_{tB} \phi(x, (1-\eta) |\nabla u|) \, dx + C \int_{tB} \phi(x, |\nabla \psi|) \, dx \\
+ C \int_{tB} \phi \left (x, \frac{|u-u_{2B} -(\psi-\psi_{2B})|}{(t-s)r} \right ) \, dx.
\end{align*}
Since $\eta =1$ in $sB$, we see that $\phi(x,(1-\eta)|\nabla u|) =0$ in $sB$. Also, by decreasing the set $tB$ on the left-hand side of the inequality and increasing the set $tB$ on the right-hand side, we get
\begin{align*}
\int_{sB} \phi(x, |\nabla u|) \, dx \leq C \int_{tB\setminus sB} \phi(x, |\nabla u|) \, dx + C \int_{2B} \phi(x, |\nabla \psi|) \, dx \\
+ C \int_{2B} \phi \left (x, \frac{|u-u_{2B} -(\psi-\psi_{2B})|}{(t-s)r} \right ) \, dx.
\end{align*}
Now we use the hole-filling trick by adding $C\int_{sB} \phi(x, |\nabla u|) \, dx$ to both sides of the previous inequality and get $C+1$ of them in the left-hand side while having just constant $C$ on the right-hand side. Now after dividing the inequality by $C+1$ we get a constant $\theta <1$ as the first constant on the right-hand side
\begin{align*}
\int_{sB} \phi(x, |\nabla u|) \, dx \leq \theta \int_{tB} \phi(x, |\nabla u|) \, dx + C \int_{2B} \phi(x, |\nabla \psi|) \, dx \\
+ C \int_{2B} \phi \left (x, \frac{|u-u_{2B} -(\psi-\psi_{2B})|}{(t-s)r} \right ) \, dx.
\end{align*}
Identifying this inequality with the one in iteration Lemma \ref{lem:iteration}, we see after changing to averages that
\begin{align*}
\fint_{B} \phi(x, |\nabla u|) \, dx \leq C \fint_{2B} \phi \left (x, \dfrac{|u-u_{2B}|}{r} + \dfrac{|\psi-\psi_{2B}|}{r} \right ) \, dx + C \fint_{2B} \phi(x, |\nabla \psi|) \, dx.
\end{align*}
As before, we can use (aDec) to obtain
\begin{align*}
\fint_{B} \phi(x, |\nabla u|) \, dx \leq C \fint_{2B} \phi \left (x, \dfrac{|u-u_{2B}|}{r} \right ) \, dx  + C \fint_{2B} \phi\left (x, \dfrac{|\psi-\psi_{2B}|}{r} \right ) \, dx  \\
+ C \fint_{2B} \phi(x, |\nabla \psi|) \, dx.
\end{align*}
Finally using (aDec) and, as $\phi$ satisfies (A0), (A1) and (aInc)$_p$, Sobolev--Poincar\'e inequality (Proposition \ref{pro:Poincare_Sebastian_s}) with $s=1$ we can estimate the term containing $\psi$
\begin{align*}
C \fint_{2B} \phi\left (x, \dfrac{|\psi-\psi_{2B}|}{r} \right ) \, dx \leq C \fint_{2B} \phi(x, |\nabla \psi|) \, dx + C.
\end{align*}
Therefore we get as an interior Caccioppoli inequality
\begin{align*}
\fint_{B} \phi(x, |\nabla u|) \, dx \leq C \fint_{2B} \phi \left (x, \dfrac{|u-u_{2B}|}{r} \right ) \, dx + C \fint_{2B} \phi(x, |\nabla \psi|) \, dx + C.
\end{align*}
Lastly, we use (aDec) to convert from radius to diamater.
\end{proof}

\begin{lem}[Caccioppoli inequality over the boundary]
\label{lem:outerior-Caccioppoli}
Let $\phi \in \Phi_w(\Omega)$ satisfy (aDec) and let $u$ be a minimizer of the $\mathcal{K}^{f}_{\psi}(\Omega)$-obstacle problem where $f, \psi \in W^{1,\phi}(\Omega)$. Assume that there exists a compact set $K \subset \Omega$ such that $f \geq \psi$ in $\Omega \setminus K$ or that $\phi$ satisfies also (A0) and (A1). 
Then we have
\begin{align}
\label{eq:outerior-caccioppoli}
\begin{split}
\dfrac{1}{|B|}\int_{B\cap \Omega} \phi(x, |\nabla u|) \, dx &\leq \dfrac{C}{|2B|} \int_{2B \cap \Omega} \phi \left (x, \dfrac{|u-f|}{\diam(2B)}\right ) \, dx  + \dfrac{C}{|2B|} \int_{2B\cap \Omega} \phi(x, |\nabla f|) \, dx
\end{split}
\end{align}
in the ball $B:=B(y,r)$ with $y \in \Omega$, $2B\setminus \Omega \not = \emptyset$ and $r < \frac{r_0}{4}$, where $r_0 := \dist\{K,\partial \Omega\}$ and constant the $C$ depends only on $n$ and $\phi$.
\end{lem}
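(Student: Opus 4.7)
The plan is to run the interior Caccioppoli argument of Lemma \ref{lem:interior-Caccioppoli} with the boundary function $f$ playing the role of a ``reference'' (replacing $u_{2B}$), exploiting the boundary condition $u-f\in W^{1,\phi}_0(\Omega)$ instead of a Poincar\'e inequality. Fix $1\leq s<t\leq 2$ and let $\eta\in C_0^{\infty}(tB)$ satisfy $\eta\equiv 1$ on $sB$, $0\leq\eta\leq 1$, and $|\nabla\eta|\leq 2/[(t-s)r]$. Take the test function
\[
v:=u-\eta(u-f)=(1-\eta)u+\eta f.
\]
Since $v-f=(1-\eta)(u-f)$ with $u-f\in W^{1,\phi}_0(\Omega)$, I would approximate $u-f$ by $C_0^\infty(\Omega)$-functions and multiply each approximant by the smooth bounded factor $1-\eta$; as norm and modular convergence are equivalent under (aDec), this gives $v-f\in W^{1,\phi}_0(\Omega)$.

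The delicate part of admissibility is showing $v\geq\psi$ a.e. Here the hypothesis $r<r_0/4$ paired with $2B\setminus\Omega\neq\emptyset$ plays the essential role: every $x\in 2B$ lies at distance strictly less than $4r<r_0$ from $\partial\Omega$ (the straight segment from $x$ to any point of $2B\setminus\Omega$ has length less than $4r$ and must cross $\partial\Omega$), so $x\notin K$. Hence $2B\cap\Omega\subset\Omega\setminus K$, on which $f\geq\psi$. Therefore on $\mathrm{supp}\,\eta\subset tB$ the convex combination satisfies $v=(1-\eta)u+\eta f\geq(1-\eta)\psi+\eta\psi=\psi$, while outside $tB$ one has $v=u\geq\psi$, so $v\in\mathcal{K}_\psi^f(\Omega)$. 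Under the alternative hypotheses (A0), (A1), Lemma \ref{lem:f-above-psi} first lets us replace $f$ with $\max\{f,\psi\}$, reducing to the previous situation (with $K=\emptyset$).

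With admissibility secured, minimality of $u$ together with $v=u$ on $\Omega\setminus tB$ gives $\int_{tB\cap\Omega}\phi(x,|\nabla u|)\,dx\leq\int_{tB\cap\Omega}\phi(x,|\nabla v|)\,dx$. From $|\nabla v|\leq(1-\eta)|\nabla u|+\eta|\nabla f|+|u-f||\nabla\eta|$ and \eqref{eq:phi-triangle} (a consequence of (aDec)), together with $(1-\eta)|\nabla u|=0$ on $sB$,
\begin{align*}
\int_{sB\cap\Omega}\phi(x,|\nabla u|)\,dx
&\leq C\int_{(tB\setminus sB)\cap\Omega}\phi(x,|\nabla u|)\,dx+C\int_{2B\cap\Omega}\phi(x,|\nabla f|)\,dx\\
&\quad+C\int_{2B\cap\Omega}\phi\Bigl(x,\tfrac{|u-f|}{(t-s)r}\Bigr)\,dx.
\end{align*}

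Adding $C\int_{sB\cap\Omega}\phi(x,|\nabla u|)\,dx$ to both sides (the hole-filling trick) and dividing by $C+1$ brings the estimate into the form of the iteration Lemma \ref{lem:iteration}; the right-hand majorant, viewed as a function of $1/(t-s)$, is doubling by (aDec), so the lemma closes the estimate with $s=1$, $t=2$. Converting the denominator $r$ to $\diam(2B)=4r$ via one more invocation of (aDec) and absorbing $|B|/|2B|\sim 1$ into the constant yields \eqref{eq:outerior-caccioppoli}. The only genuinely new ingredient beyond the interior Caccioppoli proof is the admissibility check $v\geq\psi$; this is precisely where the separation-from-$K$ hypothesis (or its substitute via Lemma \ref{lem:f-above-psi}) is consumed, and it is the main obstacle to overcome.
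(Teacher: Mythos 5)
Your proof is correct and follows essentially the same route as the paper: the same test function $v=u-\eta(u-f)$, the same hole-filling and iteration argument, and the same use of $r<r_0/4$ to guarantee $tB\cap\Omega\subset\Omega\setminus K$ where $f\geq\psi$. You spell out the admissibility checks ($v-f\in W^{1,\phi}_0(\Omega)$ via density, and $v\geq\psi$ via the distance argument) more explicitly than the paper does, which simply notes them in passing; this is a welcome addition of detail, not a deviation.
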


\begin{proof}
If $\phi$ satisfies (A0) and (A1), Lemma \ref{lem:f-above-psi} allows us to assume that $f \geq \psi$ and therefore we can take the compact set $K$ as $\emptyset$.
As for the Caccioppoli inequality, we choose $1\leq s<t\leq 2$ and $\eta \in C^{\infty}_0(tB)$ to be a cut-off function such that $\eta =1$ in $sB$, $0\leq \eta \leq 1$, $\eta =0$ in $2B\setminus tB$ and $|\nabla \eta| \leq \frac{2}{(t-s)r}$. This time we use $v:=u-\eta(u-f)$ as a test function. Here we note that $v \in \mathcal{K}^{f}_{\psi}(tB \cap \Omega)$, since $f \geq \psi$ in $\Omega \setminus K$ and the radius $r$ is small enough. Using similar approach as in proof of interior Caccioppoli inequality, we get
\begin{align*}
\int_{tB \cap \Omega} \phi(x, |\nabla u|) \, dx &\leq  \int_{tB \cap \Omega} \phi(x, |\nabla v|) \, dx \\
&\leq \int_{tB\cap \Omega} \phi(x, (1-\eta)|\nabla u| + |u-f||\nabla \eta|+\eta|\nabla f|) \, dx \\
&\leq C \int_{tB \cap \Omega} \phi(x, (1-\eta)|\nabla u|) \, dx + C \int_{tB \cap \Omega} \phi(x, |u-f||\nabla \eta|) \, dx \\
& \quad + C \int_{tB \cap \Omega} \phi(x, |\nabla f|) \, dx.
\end{align*}
Again by decreasing and increasing integration domains and noting that $\eta=1$ in $sB\cap \Omega$, we continue
\begin{align*}
\int_{sB \cap \Omega} \phi(x, |\nabla u|) \, dx &\leq C \int_{(tB \setminus sB) \cap \Omega} \phi(x, |\nabla u|) \, dx + C \int_{2B \cap \Omega} \phi\left (x, \frac{|u-f|}{(t-s)r} \right ) \, dx \\
& \quad + C \int_{2B \cap \Omega} \phi(x, |\nabla f|) \, dx.
\end{align*}
Repeating the hole-filling trick as in the previous Caccioppoli inequality, we get
\begin{align*}
\int_{sB \cap \Omega} \phi(x, |\nabla u|) \, dx &\leq \theta \int_{tB \cap \Omega} \phi(x, |\nabla u|) \, dx + C \int_{2B \cap \Omega} \phi\left (x, \frac{|u-f|}{(t-s)r} \right ) \, dx \\
& \quad + C \int_{2B \cap \Omega} \phi(x, |\nabla f|) \, dx
\end{align*}
and thus repeating the iteration, Lemma \ref{lem:iteration}, we end up with
\begin{align*}
\int_{B \cap \Omega} \phi(x, |\nabla u|) \, dx &\leq  C \int_{2B \cap \Omega} \phi\left (x, \frac{|u-f|}{r} \right ) \, dx + C \int_{2B \cap \Omega} \phi(x, |\nabla f|) \, dx.
\end{align*}
Now we divide by the measure of balls
\begin{align}
\label{eq:boundary-caccioppoli}
\begin{split}
\dfrac{1}{|B|} \int_{B\cap \Omega} \phi(x, |\nabla u|) \, dx &\leq  \dfrac{C}{|2B|} \int_{2B \cap \Omega} \phi\left (x, \frac{|u-f|}{r} \right ) \, dx  + \dfrac{C}{|2B|} \int_{2B \cap \Omega} \phi(x, |\nabla f|) \, dx.
\end{split}
\end{align}
Finally we use (aDec) to change from $r$ to diameter and get the desired Caccioppoli inequality.
\end{proof}

Next we prove the global higher integrability result.

\begin{proof}[Proof of Theorem \ref{thm:main-theorem}]
Let $B:=B(y,r)$ be a ball with $y \in \Omega$ and a radius $r$ satisfying 
\begin{align}
\label{eq:small-u-f}
\|\nabla(u-f)\|_{L^\phi(3B)} + |3B| < \frac{1}{C},  \|\nabla(u-f)\|_{L^{\phi^{1/s}}(3B)} < 1 \quad \text{and} \quad \|\nabla \psi\|_{L^\phi(2B)} < 1,
\end{align}
where $s\leq p$ satisfies the assumptions in the Sobolev--Poincar\'e inequality (Proposition \ref{pro:Poincare_Sebastian_s}) and $C$ is the constant of the same inequality. If $2B \subset \Omega$, from Caccioppoli inequality (Lemma \ref{lem:interior-Caccioppoli}) we have 
\begin{align*}
\fint_{B}\phi(x, |\nabla u|) \, dx \leq C \fint_{2B} \phi \left (x, \dfrac{|u-u_{2B}|}{\diam(2B)} \right ) \, dx + C \fint_{2B} \phi(x, |\nabla \psi|) \, dx + C.
\end{align*}
For the first term on the right-hand side we can use Sobolev--Poincar\'e inequality (Proposition \ref{pro:Poincare_Sebastian_s}) and introduce a constant $s>1$ from \eqref{eq:small-u-f}  such that
\begin{align}
\label{eq:first-SP}
\begin{split}
\fint_{2B} \phi \left (x, \dfrac{|u-u_{2B}|}{\diam(2B)} \right ) \, dx &\leq C \left (\fint_{2B}\phi(x,|\nabla u|)^{1/s} \, dx \right )^{s} + C \\
&\leq C \left (\fint_{3B\cap \Omega} \phi(x, |\nabla u|)^{1/s} \, dx \right )^s + C.
\end{split}
\end{align}

Now if $2B \setminus \Omega \not = \emptyset$, then we use the Caccioppoli inequality over the boundary (Lemma \ref{lem:outerior-Caccioppoli})
\begin{align*}
\dfrac{1}{|B|}\int_{B\cap \Omega} \phi(x, |\nabla u|) \, dx &\leq \dfrac{C}{|2B|} \int_{2B \cap \Omega} \phi \left (x, \dfrac{|u-f|}{\diam(2B)}\right ) \, dx  + \dfrac{C}{|2B|} \int_{2B\cap \Omega} \phi(x, |\nabla f|) \, dx.
\end{align*}

The idea is to use Sobolev--Poincar\'e inequality also to the term involving $u-f$, but this needs some preparation, as there is no integral average on the right-hand side. First we notice that since $u-f \in W^{1, \phi}_0(\Omega)$, it has a zero extension belonging to $W^{1,\phi}(\R^n)$ as in the proof of Lemma \ref{lem:min-local}. This allows us to extend the domain of integration form $2B \cap \Omega$ to $2B$. Second, we note that using (aDec) we can increase the radii of balls
\begin{align}
\label{eq:radii-increasing}
\begin{split}
\fint_{2B} \phi \left (x, \dfrac{|u-f|}{\diam(2B)} \right ) \, dx &\leq \dfrac{|3B|}{|2B|} \fint_{3B} \phi \left (x, \dfrac{3}{2} \dfrac{|u-f|}{\diam(3B)} \right ) \, dx \\
&\leq C \fint_{3B} \phi \left (x, \dfrac{|u-f|}{\diam(3B)}\right ) \, dx.
\end{split}
\end{align} 
Next we choose a ball $\tilde B := \tilde{B}(x_0, r)$, where $x_0 \in 2B \cap \partial \Omega$. It is easily seen that $\tilde B \subset 3B$. Also by appealing to measure density condition \eqref{eq:measure-density}, we see that there exists a constant $\tilde c \in (0,1)$ such that 
\begin{align}
\label{eq:measure-condition-2}
{|\{x \in 3B : u-f=0\}|} \geq |\{x \in \tilde B : u-f=0\}| \geq |\Omega^c \cap \tilde B| \geq c_\ast |\tilde B| \geq \tilde c |3B|.
\end{align}
For brevity, let us denote $v:=u-f$ and $A:=\{x \in 3B : u-f=0\}$. Let us also recall that
\begin{align}
\label{eq:BMO}
\fint_\Omega |g-g_{\Omega'}| \, dy \leq 2\dfrac{|\Omega|}{|\Omega'|} \fint_\Omega |g-g_{\Omega}| \, dy
\end{align}
when $\Omega' \subset \Omega$ has positive measure \cite[Lemma 2.3]{Hur-88}.

Now by \eqref{eq:measure-condition-2} the set $A$ has positive measure and therefore it is meaningful to state that $v_A=0$. With this we can write
\begin{align*}
\fint_{3B} \phi \left ( x, \dfrac{|v|}{\diam(3B)} \right ) \, dx  = \fint_{3B} \phi \left ( x, \dfrac{|v-v_{3B}|+ |v_{3B}-v_A|}{\diam(3B)} \right ) \, dx.
\end{align*}
After an application of (aDec) we get
\begin{align}
\label{eq:s-p-1}
\begin{split}
\fint_{3B} \phi \left ( x, \dfrac{|v|}{\diam(3B)} \right ) \, dx  &\leq C \fint_{3B} \phi \left ( x, \dfrac{|v-v_{3B}|}{\diam(3B)} \right ) \, dx \\
&\quad+ C \fint_{3B}\phi \left ( x, \dfrac{|v_{3B}-v_A|}{\diam(3B)} \right ) \, dx.
\end{split}
\end{align}
The first term on the right-hand side can be estimated with Sobolev--Poincar\'e inequality (Proposition \ref{pro:Poincare_Sebastian_s}) since \eqref{eq:small-u-f} is in force. Let us then use \eqref{eq:BMO} to estimate the last term
\begin{align*}
\fint_{3B}\phi \left ( x, \dfrac{|v_{3B}-v_A|}{\diam(3B)} \right ) \, dx &\leq \fint_{3B} \phi \left (x, \dfrac{\fint_{3_B} |v-v_A| \, dy}{\diam(3B)} \right ) \, dx \\
&\leq C \fint_{3B} \phi \left (x, \dfrac{ 2 \tfrac{|3B|}{|A|}  \fint_{3_B} |v-v_{3B}|\, dy}{\diam(3B)} \right ) \, dx.
\end{align*}
Now by using \eqref{eq:measure-condition-2} and (aDec) we get
\begin{align*}
\fint_{3B}\phi \left ( x, \dfrac{|v_{3B}-v_A|}{\diam(3B)} \right ) \, dx  \leq C \fint_{3B} \phi \left (x, \dfrac{\fint_{3_B} |v-v_{3B}| \, dy}{\diam(3B)} \right ) \, dx.
\end{align*}

From \eqref{eq:small-u-f} we especially have that $\|\nabla v\|_{L^\phi(3B)} < 1$.
%, which implies by the unit-ball property \eqref{eq:unit-ball-property} that $\rho_{3B}(\nabla v) \leq 1$.
 Thus by Sobolev--Poincar\'e inequality (Proposition \ref{pro:Poincare_Sebastian_s}) with $s=1$ and \eqref{eq:small-u-f} we have that
\begin{align*}
\int_{3B} \phi\left (x, \dfrac{|v-v_{3B}|}{\diam(3B)} \right )\, dx \leq C \left [ \int_{3B} \phi(x, |\nabla v|) \, dx +  |3B| \right ] < 1.
\end{align*}
By the unit-ball property \eqref{eq:unit-ball-property}, we see that $\left \|\dfrac{v-v_{3B}}{\diam(3B)}\right \|_{L^\phi(3B)} \leq 1$, so the assumptions of the Jensen type estimate (Lemma \ref{lem:jensen}) are satisfied. Now using it to pull the integral out from the $\phi$ and noticing that outer integral average is redundant, we continue
\begin{align}
\label{eq:after-jensen}
\begin{split}
\fint_{3B}\phi \left ( x, \dfrac{|v_{3B}-v_A|}{\diam(3B)} \right ) \, dx  &\leq C \fint_{3B} \fint_{3B} \phi \left (y, \dfrac{|v-v_{3B}|}{\diam(3B)} \right ) \, dy + 1 \, dx \\
&=C  \fint_{3B} \phi \left (y, \dfrac{|v-v_{3B}|}{\diam(3B)} \right ) \, dy + C. 
\end{split}
\end{align}
Now the last integral is in a form to which the Sobolev--Poincar\'e inequality is applicable and we see that (after the backwards substitution $v = u-f$)
\begin{align*}
\fint_{3B} \phi \left ( x, \dfrac{|u-f|}{\diam(3B)} \right ) \, dx  &\leq C \left ( \fint_{3B} \phi(x, |\nabla (u-f)|)^{1/s} \, dx \right )^{s} + C \\
&= C \left (\dfrac{1}{|3B|}\int_{3B\cap \Omega} \phi(x, |\nabla (u-f)|)^{1/s}\, dx \right )^{s} + C,
\end{align*}
where the equality follows, as $u-f=0$ outside of $\Omega$. Now finishing with triangle inequality, (aDec) and H\"older's inequality we conclude that
\begin{align}
\label{eq:s-p-2}
\begin{split}
\fint_{3B} \phi \left ( x, \dfrac{|u-f|}{\diam(3B)} \right ) \, dx  &\leq C \left (\dfrac{1}{|3B|} \int_{3B \cap \Omega} \phi(x, |\nabla u|)^{1/s} \, dx \right )^{s} \\
& \quad+ C \left ( \dfrac{1}{|3B|} \int_{3B \cap \Omega} \phi(x, |\nabla f|)^{1/s} \, dx \right )^{s} + C \\
&\leq C \left ( \dfrac{1}{|3B|} \int_{3B \cap \Omega} \phi(x, |\nabla u|)^{1/s} \, dx \right )^{s} \\
&\quad +   \dfrac{C}{|3B|} \int_{3B \cap \Omega} \phi(x, |\nabla f|)\, dx + C.
\end{split}
\end{align}

Combining the Caccioppoli inequalities (Lemmas \ref{lem:interior-Caccioppoli} and \ref{lem:outerior-Caccioppoli}), \eqref{eq:first-SP} and \eqref{eq:s-p-2} we have
\begin{align}
\label{eq:s-estimate}
\begin{split}
\dfrac{1}{|B|} \int_{B \cap \Omega} \phi(x,|\nabla u|) \,dx &\leq C \left (\dfrac{1}{|3B|} \int_{3B \cap \Omega} \phi \left (x, |\nabla u| \right )^{1/s} \, dx\right )^{s} \\
&\quad+ \dfrac{C}{|3B|} \int_{3B \cap \Omega} \phi(x, |\nabla \psi|) \, dx 
\\ &\quad + \dfrac{C}{|3B|} \int_{3B\cap \Omega} \phi(x, |\nabla f|) \, dx + C.
\end{split}
\end{align}
Now let
\begin{alignat*}{2}
g:=
    & \begin{aligned} & \begin{cases}
  \phi(x, |\nabla u|), \quad &\text{if } x \in \Omega \\
  0 &\text{if } x \not \in \Omega
  \end{cases}\\
  \end{aligned}
    &, \quad h:=
  \begin{aligned}
  & \begin{cases}
	\phi(x, |\nabla \psi|) + \phi(x, |\nabla f|), \quad &\text{if } x \in \Omega \\
  0 &\text{if } x \not \in \Omega.
  \end{cases} \\
  \end{aligned}
\end{alignat*}
Writing \eqref{eq:s-estimate} with functions $g$ and $h$ we get
\begin{align*}
\fint_{B} g \, dx \leq C \left ( \fint_{3B} g^{1/s} \, dx \right )^{s} + C \fint_{3B} h \, dx + C,
\end{align*}
where $h$ has higher integrability as $\phi(x,|\nabla \psi|), \phi(x,|\nabla f|) \in L^{1+\delta}(\Omega)$.
Now we can use Gehring's lemma, Lemma \ref{lem:Gehring}, which yields a number $\varepsilon>0$ and a constant $C$ such that
\begin{align*}
\fint_{B} \phi(x,|\nabla u|)^{1+ \varepsilon} \, dx &\leq C \bigg[ \left ( \fint_{3B} \phi(x,|\nabla u|) \, dx \right )^{1+ \varepsilon} \\
&\quad + \fint_{3B} \phi(x, |\nabla f|)^{1+\varepsilon} \, dx + \fint_{3B} \phi(x,|\nabla \psi|)^{1+\varepsilon} \, dx +1 \bigg].
\end{align*}
The theorem follows after a covering argument since $\Omega$ is bounded and $\overline{\Omega}$ is compact.
\end{proof}

\section*{Acknowledgements}

I would like to thank Petteri Harjulehto and Peter H\"ast\"o for their insightful comments on the manuscript. This research was partially supported by Turku University Foundation.

\bigskip

\noindent\small{
\textsc{A. Karppinen}}\\
\small{Department of Mathematics and Statistics,
FI-20014 University of Turku, Finland}\\
\footnotesize{\texttt{arttu.a.karppinen@utu.fi}}\\

\begin{thebibliography}{99}



\bibitem{BarCM15} 
P.\ Baroni, M.\ Colombo and G.\ Mingione:
Harnack inequalities for double phase functionals,
Nonlinear Anal. 121 (2015), 206--222. 

\bibitem{BarHH_pp17}
D.\ Baruah, P.\ Harjulehto and P.\ H\"asto:
Capacities on generalized Orlicz spaces, J. Funct. Spaces. vol. 2018, ArticleID 8459874, 10 pages, 2018.

\bibitem{BjoB11}
A. Bj\"orn and J. Bj\"orn: \emph{Nonlinear potential theory on metric spaces}, EMS Tracts in Mathematics, 17, European Mathematical Society (EMS), Z\"urich, 2011.

\bibitem{BreV13}
D.\ Breit and A.\ Verde:
Quasiconvex variational functionals in Orlicz-Sobolev spaces,
Ann. Mat. Pura Appl. (4) 192 (2013), no. 2, 255--271

\bibitem{Che16}
Y.\ Chen:
Existence and Global Higher Integrability of Quasiminimizers among Minimizing Sequences of Variational Integrals,
Malays. J. Math. Sci. (2016), 10(S) August: 85--100


%
\bibitem{ColM15a}
M.\ Colombo and G.\ Mingione: 
Regularity for double phase variational problems,
Arch. Ration. Mech. Anal. 215 (2015), no. 2, 443--496.
%
\bibitem{ColM15b}
M.\ Colombo and G.\ Mingione: 
Bounded minimisers of double phase variational integrals,
Arch. Ration. Mech. Anal. 218 (2015), no. 1, 219--273.

\bibitem{DieHHR11} 
L.\ Diening, P.\ Harjulehto, P.\ H\"ast\"o and M.\ R\r u\v zi\v cka:
\emph{Lebesgue and Sobolev spaces with variable exponents}, 
Lecture Notes in Mathematics, 2017. Springer, Heidelberg, 2011.

\bibitem{EkeHL13}
M.\ Eleuteri, P.\ Harjulehto and T.\ Lukkari:
Global regularity and stability of solutions to obstacle problems with nonstandard growth, Rev. Mat. Complut. 26 (2013), 147--181.


\bibitem{Giusti} E.\ Giusti: 
\emph{Direct Methods in the Calculus of Variations}, 
World Scientific, Singapore, 2003.

\bibitem{GwaWWZ12}
P.\ Gwiazda, P.\ Wittbold, A.\ Wr\'oblewska and A.\ Zimmermann: 
Renormalized solutions of nonlinear elliptic problems in generalized Orlicz spaces, 
J. Differential Equations 253 (2012), no.~2, 635--666; 
J. Differential Equations 253 (2012), no.~9, 2734--2738. 

\bibitem{GwaSWZ18}
P.\ Gwiazda, I.\ Skrzypczak and A.\ Zatorska-Goldstein:
Existence of renormalized solutions to elliptic equation in Musielak-Orlicz space,
J. Differential Equations 264 (2018), no. 1, 341--377.

\bibitem{HarH_pp16}
P.\ Harjulehto and P.\ H\"ast\"o: 
Boundary regularity under generalized growth conditions, Z. Anal. Anwend, 38 (2019), no. 1, 73--96

\bibitem{HarH17}
P.\ Harjulehto and P.\ H\"ast\"o: 
Riesz potential in generalized Orlicz Spaces,
Forum Math. 29 (2017), no. 1, 229--244.

\bibitem{HarH18}
P.\ Harjulehto and P.\ H\"ast\"o:
\emph{Orlicz spaces and Generalized Orlicz spaces},
Lecture Notes in Mathematics, vol. 2236, Springer, Cham, 2019, to appear.

\bibitem{HarHK16}
P.\ Harjulehto, P.\ H\"ast\"o and R.\ Kl\'en: 
Generalized Orlicz spaces and related PDE, 
Nonlinear Anal. 143 (2016), 155--173.
%DOI: 10.1016/j.na.2016.05.002
%
\bibitem{HarHK18}
P.\ Harjulehto, P.\ H\"ast\"o and A. Karppinen:
Local higher integrability of the gradient of a quasiminimizer under generalized Orlicz growth conditions, Nonlinear Anal. Volume 177, Part B, (2018), 543--552.

%
\bibitem{HarHT17}
P.\ Harjulehto, P.\ H\"ast\"o and O.\ Toivanen: 
H\"older regularity of quasiminimizers under generalized growth conditions, Calc. Var. Partial Differential Equations 56 (2017), no. 2, article:22.

\bibitem{Has15}
P.\ H\"ast\"o: 
The maximal operator on generalized Orlicz spaces, 
J. Funct. Anal 269 (2015), no. 12, 4038--4048.

\bibitem{Has16}
P.\ H\"ast\"o: 
Corrigendum to ''The maximal operator on generalized Orlicz spaces'' [J. Funct. Anal. 269 (2015) 4038--4048], 
J. Funct. Anal. 271 (2016), no. 1, 240--243. 

\bibitem{HasO19}
P.\ H\"ast\"o and J.\ Ok:
Maximal regularity for local minimizers of non-autonomous functionals, Preprint. 
\bibitem{HeiKM06}

J.\ Heinonen, T.\ Kilpel\"ainen, O.\ Martio:
\emph{Nonlinear Potential Theory of Degenerate Elliptic Equations}, Dover Publications Inc., Mineola, NY, 2006.
Unabridged republication of the 1993 original.

\bibitem{Hur-88}
R.\ Hurri:
Poincar\'e domains in $\R^n$,
Ann. Acad. Sci. Fenn. Ser. A. I. Math., Dissertationes, 71:1--42, (1988)

\bibitem{MaeMOS11}
F.-Y.\ Maeda, Y.\ Mizuta, T.\ Ohno, T.\ Shimomura:
Capacity for potentials of functions in Musielak--Orlicz spaces,
Nonlinear Anal. 74 (2011), no. 17, 6231--6243.


\bibitem{MizOS18}
Y. Mizuta, T. Ohno and T. Shimomura: 
Sobolev inequalities for Musielak--Orlicz spaces,
Manuscripta Math. 155 (2018), no. 1-2, 209--227.

\bibitem{OhnS16}
T.\ Ohno and T.\ Shimomura: 
Musielak-Orlicz-Sobolev spaces with zero boundary values on metric measure spaces,
Czechoslovak Math. J. 66(141) (2016), no. 2, 371--394.

\bibitem{Ok16}
J.\ Ok: 
Gradient estimates for elliptic equations with $L^{p(\cdot)}\log L$ growth, Calc. Var. Partial Differential Equations 55 (2016), no. 2, 1--30.
doi: 10.1007/s00526-016-0965-z.

\bibitem{YanLK17}
D.\ Yang, Y.\ Liang and L.\ Ky:
\emph{Real-variable theory of Musielak-Orlicz Hardy spaces}, 
Lecture Notes in Mathematics, 2182. Springer, Cham, 2017. xiii+466 pp.

\end{thebibliography}
\end{document}